\theoremstyle{plain}
\def \ra{\rightarrow}
\def \mfk{\mathfrak}
\def \mcO{\mathcal{O}}
\def \mcOP{\mcO_{\mbb{P}^2}}
\def \mcalp3{\mcO_{\mbb{P}^3}}
\def \pd{\pi_D}
\def \mcPn{\mcO_{\mbb{P}^n}}
\def \mbb{\mathbb}
\newcommand{\hra}{\hookrightarrow}
\newcommand{\xra}{\xrightarrow}
\newcommand{\tildeB}{\widetilde{B}}
\newcommand{\tildeX}{\widetilde{X}}
\newcommand{\mbbP}{\mathbb{P}}
\newcommand{\mcalP}{\mathcal{O}_{\mathbb{P}^n}}
\newcommand{\mcE}{\mathcal{E}}
\newcommand{\mcS}{\mathcal{S}}
\newcommand{\mcOL}{\mcO_{\mbb{P}^1}}
\begin{document}
\begin{abstract}
In this article, we prove that any smooth projective variety $X$ which is a  double cover of the
  projective space $\mbb{P}^n$ ($n\geq 2$) admits an Ulrich bundle. When $n=2$, we show that on any such $X$, there is an Ulrich bundle of rank two. 
\end{abstract}

\newtheorem{thm}{Theorem}[section]
\newtheorem{lemma}[thm]{Lemma}

\newtheorem*{thmfour}{Theorem 4.4}

\title{Ulrich bundles on double covers of projective spaces}
\author[N.~Mohan Kumar]{N.~Mohan Kumar}
\address{Department of Mathematics, Washington University in St. Louis, St. Louis, Missouri, 63130}
\email{kumar@wustl.edu}

\author[P.~Narayanan]{Poornapushkala Narayanan}
\address{International Centre for Theoretical Sciences, Bangalore - 560089, and Madras School of Economics, Chennai - 600025}
\email{poorna@mse.ac.in}

\author[A.~J.~Parameswaran]{A.~J.~Parameswaran}
\address{School of Mathematics, Tata Institute of Fundamental Research, Mumbai - 400005.}
\email{param@math.tifr.res.in}

\thanks{Mathematics Classification numbers: 14E20, 14J60, 14J70, 14H30, 14H50}
\keywords{Ulrich bundles, quadrics, double covers, coverings of curves.}

\maketitle
\section{Introduction}
Arithmetically Cohen-Macaulay (ACM) vector bundles have been at the center of the study of vector bundles over projective varieties. These
are defined to be vector bundles $E$ on projective varieties $X$ satisfying
$$H^i(X,E(l))=0\quad\text{for all }l\in\mbb{Z}\text{ and }0<i<\text{dim}\,X\,.$$
It was proved by Horrocks \cite{Hor} that ACM vector bundles on
projective spaces $\mbb{P}^n$
are precisely those which split as a direct sum of line 
bundles. Following this, there has been  a lot of interest in understanding ACM bundles over all varieties.
Among ACM bundles $E$, Ulrich bundles are those whose associated module
$\oplus_{t} H^0(X,E(t))$ has the maximum number of generators. 

In his 1984 paper \cite{Ul}, Ulrich asked whether every smooth projective variety carries an Ulrich bundle.
Eisenbud and Schreyer \cite{ES} drew attention to this question and also asked for the minimal rank of such a bundle whenever it exists. 
The existence of Ulrich bundles has been established in the case of several varieties, we refer to \cite{AB} and \cite{Co} and references therein for a complete list of such varieties. 

In this note, we consider complex smooth projective varieties $X$ which admit a degree two morphism $\pi:X\ra\mbb{P}^n$ (for $n\geq 2$) to the projective space.  We discuss the existence of Ulrich bundles over $X$  with respect to the morphism $\pi$, or equivalently with respect to the ample and globally generated line bundle $\pi^*\mcPn(1)$. Observe that a vector bundle $E$ on $X$ is Ulrich with respect to $\pi$, if $\pi_*E$ is the trivial vector bundle on $\mbb{P}^n$. When $n=2$, i.e. if $X$ is a double cover of $\mbb{P}^2$, we prove the following result regarding the existence of Ulrich bundles on $X$ in $\mathcal{x}\,$\ref{Ulrich bundles over surfaces}.
\begin{thm}\label{UB on the plane}
 A smooth double cover $X\xra{\pi} \mbb{P}^2$ of the projective plane branched along a 
 smooth curve $B$ admits a rank two Ulrich bundle.
\end{thm}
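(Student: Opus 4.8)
\emph{Plan.} The idea is to replace vector bundles on $X$ by the elementary data of their push-forwards. Write $L=\mathcal{O}_{\mathbb{P}^2}(k)$, so that $B$ is a divisor in $|L^{\otimes 2}|$ and $\pi_*\mathcal{O}_X\cong\mathcal{O}_{\mathbb{P}^2}\oplus L^{-1}$, the algebra structure being multiplication by $B\colon L^{-1}\otimes L^{-1}\to\mathcal{O}_{\mathbb{P}^2}$. Since $\pi$ is finite, $\pi_*$ is exact and faithful, and it identifies $\mathrm{Coh}(X)$ with the category of pairs $(\mathcal{F},\psi)$, where $\mathcal{F}$ is coherent on $\mathbb{P}^2$ and $\psi\colon\mathcal{F}\to\mathcal{F}\otimes L$ (the action of the tautological section $y$, with $y^2=B$) satisfies $\psi^2=B\cdot\mathrm{id}$. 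By the description of Ulrich bundles recalled above, a rank two bundle $E$ on $X$ is Ulrich exactly when $\pi_*E\cong\mathcal{O}_{\mathbb{P}^2}^{\oplus 4}$; so the theorem amounts to producing a $4\times4$ matrix $\psi$ of forms of degree $k$ on $\mathbb{P}^2$ with $\psi^2=B\cdot I_4$ for which the corresponding sheaf on $X$ is locally free. A local computation at the (smooth) ramification divisor shows that the latter holds precisely when $\psi|_B$ has rank $2$ at every point of $B$; since $\psi^2|_B=0$ the rank is automatically $\le 2$ there.

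\emph{Construction.} I would obtain $\psi$ as a ``Clifford square root'' of $B$. Fix a nondegenerate quadratic form $q$ of rank four, say $q(u)=u_1u_2+u_3u_4$, together with constant matrices $\Gamma_1,\dots,\Gamma_4\in M_4(\mathbb{C})$ realizing its (unique) four-dimensional irreducible Clifford module, so that $\bigl(\sum_i u_i\Gamma_i\bigr)^2=q(u)\,I_4$. If
\[
B=g_1g_2+g_3g_4,\qquad g_1,\dots,g_4\in H^0\bigl(\mathcal{O}_{\mathbb{P}^2}(k)\bigr),
\]
then $\psi:=\sum_i g_i\Gamma_i$ satisfies $\psi^2=B\,I_4$ automatically; a rank five form $B=g_1g_2+g_3g_4+g_5^2$ gives an even larger family of the same size. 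Two points then remain: (i) every smooth $B$ of degree $2k$ is such a nondegenerate quadratic expression in forms of degree $k$; and (ii) the $g_i$ can be chosen so that $\psi|_B$ has everywhere rank $2$. For (i) I would analyse the map $(g_1,\dots,g_4)\mapsto g_1g_2+g_3g_4$ from $H^0(\mathcal{O}_{\mathbb{P}^2}(k))^{\oplus4}$ to $H^0(\mathcal{O}_{\mathbb{P}^2}(2k))$: one has $4\binom{k+2}{2}>\binom{2k+2}{2}$, and at a general point its differential is onto because its image is the degree $2k$ part of the ideal $(g_1,\dots,g_4)$, which is everything (three general forms of degree $k$ form a complete intersection with socle in degree $3k-3$, together with the weak Lefschetz property); hence the map is dominant. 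Upgrading this to genuine surjectivity — necessary, since $B$ is an \emph{arbitrary} smooth curve, not a generic one — and then arranging (ii) by a further genericity argument inside a fibre is where I expect the real work to be. The resulting $E$ is a rank two vector bundle on $X$ with $\pi_*E\cong\mathcal{O}_{\mathbb{P}^2}^{\oplus4}$, i.e.\ a rank two Ulrich bundle.

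\emph{Where the difficulty lies.} It is useful to translate the problem downstairs. From $\pi_*E\cong\mathcal{O}_{\mathbb{P}^2}^{\oplus4}$ one reads off $c_1(E)\cdot H=2k$ and $c_2(E)=k^2$, where $H=\pi^*\mathcal{O}_{\mathbb{P}^2}(1)$ (so $\det E=\mathcal{O}_X(kH)$ when $\mathrm{Pic}(X)=\mathbb{Z}H$); conversely, any rank two $E$ with these invariants has $\chi(E(-H))=\chi(E(-2H))=0$, and since $K_X=(k-3)H$, Serre duality reduces ``$E$ is Ulrich'' to the single pair of vanishings $H^0(E(-H))=H^0(E(-2H))=0$ (the remaining cohomology then vanishes for Euler-characteristic reasons). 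Thus $E$ may equivalently be sought by the Hartshorne--Serre construction from a length $k^2$ subscheme $Z\subset X$: one needs $Z$ to satisfy Cayley--Bacharach with respect to $\mathcal{O}_X((2k-3)H)$ (so that $0\to\mathcal{O}_X\to E\to\mathcal{I}_Z(kH)\to0$ produces a locally free $E$) while having $H^0(\mathcal{I}_Z((k-1)H))=H^0(\mathcal{I}_Z((k-2)H))=0$ (so that the two cohomology groups above vanish). A general choice of $Z$ tends to violate Cayley--Bacharach, whereas a $Z$ supported on the ramification curve violates the vanishing requirement; and when $k\ge3$ the vanishing $H^0(E(-H))=0$ is not implied even by stability of $E$ (a sub-line-bundle $\mathcal{O}_X(H)\hookrightarrow E$ is compatible with stability as soon as $k>2$), so no soft moduli argument can suffice. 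Threading this needle uniformly in the branch curve $B$ — equivalently, executing steps (i) and (ii) of the explicit construction for every smooth $B$ — is, I anticipate, the crux of the theorem.
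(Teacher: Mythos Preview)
Your approach is correct and shares its decisive step with the paper: every smooth $B$ of degree $2k$ can be written as $g_1g_2+g_3g_4$ with $\deg g_i=k$. The paper proves this in \S3 (surjectivity of $A_k\to A_{2k}$ for the Artinian complete intersection $A=\mathbb{C}[x,y,z]/(g_1,g_2,g_3)$, established via an extension argument on a smooth degree-$k$ plane curve, followed by a secant/tangent-line remark to pass from generic to all $B$). Where you diverge is afterwards: rather than build a Clifford square root, the paper takes $D=(g_1=0)$ so that the branch locus on $D$ becomes $g_3g_4=lm$, invokes Theorem~4.3 to produce a line bundle $A$ on $C=\pi^{-1}(D)$ with $p_*A=\mathcal{O}_D^{\oplus 2}$, and then feeds this into the elementary-transformation Theorem~4.1. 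Your Clifford construction is shorter and more direct; the paper's route costs more but yields the independently interesting curve-theoretic characterisation Theorem~4.3 (which also drives the counterexample in the Appendix) and the general induction machine Theorem~4.1 used again for higher $\mathbb{P}^n$.

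One point you overestimate: your step~(ii) is free. Smoothness of $B$ already forces the $g_i$ to have no common zero in $\mathbb{P}^2$ (a common zero would be a singular point of $B$; this is exactly the observation behind Lemma~6.5), and for any nonzero isotropic vector $v$ of a rank-four form the Clifford action on the four-dimensional spin module has rank exactly two (pick a hyperbolic partner $w$ and use $\gamma(v)\gamma(w)+\gamma(w)\gamma(v)=I$ to get $\ker\gamma(v)=\mathrm{im}\,\gamma(v)$). Equivalently and more cheaply, $\pi_*E$ locally free with $X$ smooth makes $E$ maximal Cohen--Macaulay on a regular surface, hence locally free --- this is the argument the paper itself uses in Proof~(I) of Theorem~1.2. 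So no genericity inside a fibre is needed, and the entire content is in (i), which you correctly flag as the crux.
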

In \cite{AJP}, the authors investigated the existence of Ulrich line bundles over double covers of $\mbb{P}^2$ and showed that a smooth double cover of $\mbb{P}^2$ branched
 over a curve of degree $2d$ when $d=1,2$ admits Ulrich \emph{line bundles}. When $d\geq 3$, it was shown
 that a generic double cover does not admit an Ulrich line bundle, but there are special classes of double
 covers which do indeed admit Ulrich line bundles. 
In \cite{RA}, Sebastian and Tripathi prove that general plane double covers admit rank two Ulrich bundles using different techniques. We remark that Theorem \ref{UB on the plane} completely answers the question of existence and minimal rank of Ulrich bundles over all smooth double covers of $\mbb{P}^2$.

The proof of the above theorem involves the analysis of double covers of smooth curves $p:C\ra D$ and identifying the conditions on the curve $C$ for it to  admit a line bundle whose direct image is trivial, cf. Theorem \ref{pencil}. This condition is indeed met when $D$ is a smooth plane curve of degree $d$, and this will be used in the proof of Theorem \ref{UB on the plane}.

We remark that, previously, we had attempted to use \cite[Theorem A]{KO} which also studies line bundles on curves $C$ where $p:C\ra D$ is a double cover of smooth curves. However, we realised that the proof of Theorem A in \cite{KO} is not quite correct and we provide a counterexample to their result in the Appendix, cf. $\S$ \ref{appendix}.

In case of double covers of $\mbb{P}^n$ when $n\geq 3$, we prove the following theorem.
\begin{thm}\label{Ulrich bundle on higher Pn}
  Let $X\xra{\pi} \mbb{P}^n$ be a smooth double cover branched over a smooth hypersurface $B$ of
 degree $2d$. Then $X$ admits an Ulrich bundle.
\end{thm}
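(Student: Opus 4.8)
\emph{Proof proposal.} The plan is to reduce the existence of an Ulrich bundle on $X$ to producing a ``matrix square root'' of the equation of the branch divisor, and then to deduce local freeness of the resulting sheaf for free from the smoothness of $X$. This argument will be uniform in $n\ge 2$ and will not actually use Theorem \ref{UB on the plane}; the price is that it gives no control on the rank. Recall the standard structure of a double cover: $\pi_*\mcO_X\cong\mcPn\oplus\mcPn(-d)$, with algebra structure determined by the multiplication $\mcPn(-d)\otimes\mcPn(-d)\ra\mcPn$ given by the form $f\in H^0(\mbb{P}^n,\mcPn(2d))$ cutting out $B$. As noted in the introduction, a vector bundle $E$ on $X$ is Ulrich with respect to $\pi$ precisely when $\pi_*E$ is trivial. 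Since $\pi$ is finite (hence affine), giving a coherent sheaf $E$ on $X$ together with an isomorphism $\pi_*E\cong\mcPn^{\oplus N}$ is the same as giving the $\mcPn$-module $\mcPn^{\oplus N}$ the structure of a $\pi_*\mcO_X$-module, i.e.\ prescribing the action of the summand $\mcPn(-d)$; such an action is a morphism $\mcPn^{\oplus N}(-d)\ra\mcPn^{\oplus N}$, equivalently an $N\times N$ matrix $\varphi$ with entries in $H^0(\mbb{P}^n,\mcPn(d))$, and associativity of the module structure is exactly the identity $\varphi^2=f\,I_N$. So the theorem reduces to two steps: (i) produce, for some $N$, a matrix $\varphi$ of degree-$d$ forms with $\varphi^2=f\,I_N$; and (ii) check that the coherent sheaf $E$ it defines is locally free.

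For step (ii) I would show that \emph{any} such $\varphi$ works, using only that $X$ is smooth. The morphism $\pi$ is finite, and $\pi_*E\cong\mcPn^{\oplus N}$ is a maximal Cohen--Macaulay sheaf on $\mbb{P}^n$; since a finite morphism preserves the maximal Cohen--Macaulay property, $\operatorname{depth}_{\mcO_{X,x}}E_x\ge n$, hence $=n=\dim\mcO_{X,x}$, at every closed point $x\in X$, i.e.\ $E$ is maximal Cohen--Macaulay on $X$. But $X$ is smooth, so each $\mcO_{X,x}$ is regular, and a maximal Cohen--Macaulay module over a regular local ring is free (Auslander--Buchsbaum). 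Thus $E$ is locally free, and therefore an Ulrich bundle on $X$ with respect to $\pi$. Note that this step uses smoothness of $X$ essentially and imposes no genericity or positivity hypothesis on $\varphi$ whatsoever.

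For step (i) I would use the ``generalized Clifford'' construction. Write $f=\sum_{i=1}^{k}u_iv_i$ with $u_i,v_i\in H^0(\mbb{P}^n,\mcPn(d))$; this is always possible, e.g.\ by grouping the monomials of $f$, each monomial of degree $2d$ being a product of two monomials of degree $d$. Let $e_1,\dots,e_k$ be a basis of $\mbb{C}^{k}$ and $e_1^{*},\dots,e_k^{*}$ the dual basis; on $\Lambda=\bigwedge^{\bullet}(\mbb{C}^{k})$, of dimension $N=2^{k}$, let $A_i$ be exterior multiplication by $e_i$ and $B_i$ contraction by $e_i^{*}$. These satisfy $A_iA_j+A_jA_i=0=B_iB_j+B_jB_i$ and $A_iB_j+B_jA_i=\delta_{ij}\operatorname{id}_{\Lambda}$. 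Setting
$$\varphi\;:=\;\sum_{i=1}^{k}\bigl(A_i\,u_i+B_i\,v_i\bigr)\in\operatorname{Mat}_N\!\bigl(H^0(\mbb{P}^n,\mcPn(d))\bigr),$$
a direct expansion of $\varphi^2$, using these relations and the commutativity of the scalars $u_i,v_i$, makes the ``$AA$'' and ``$BB$'' terms cancel in pairs while the cross terms collapse to $\bigl(\sum_i u_iv_i\bigr)\operatorname{id}_{\Lambda}$, giving $\varphi^2=f\,I_N$. Together with step (ii) this yields an Ulrich bundle on $X$ (of rank $2^{k-1}$), which proves the theorem.

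The step I expect to carry the real content is (ii): recognising that local freeness of $E$ is automatic once $\pi_*E$ is trivial and $X$ is smooth. Everything else --- the dictionary between $\mcO_X$-module structures on $\mcPn^{\oplus N}$ and matrices $\varphi$, and the exterior-algebra identity $\varphi^2=f\,I_N$ --- is formal. I would also remark that the same construction recovers the \emph{existence} (though not the rank-two) part of Theorem \ref{UB on the plane}, and that the very large rank $2^{k-1}$ it produces is exactly why the case $n=2$ requires the separate, sharper argument there; alternatively, one could start from the rank-two matrix $\varphi_0$ furnished by Theorem \ref{UB on the plane} on a general plane section and graft onto it a Clifford matrix for the ``excess'' $f-\bar f$ (via tensoring with anticommuting Pauli matrices) to obtain a square root of $f$, but the exterior-algebra version above is cleaner.
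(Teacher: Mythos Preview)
Your argument is correct and amounts to the paper's Proof~(I), repackaged. Both proofs rest on the same two ingredients: a Clifford-type matrix square root of the branch form, and the observation that a sheaf on $X$ whose pushforward under $\pi$ is trivial is maximal Cohen--Macaulay, hence locally free on the smooth $X$. The paper routes through the $d$-uple embedding $\mbb{P}^n\hookrightarrow\mbb{P}^N$ so that $f$ becomes the restriction of a quadric, builds the square-root matrix inductively as $2\times2$ blocks (Theorem~\ref{quadric hypersurface}), realises the Ulrich sheaf as a cokernel on the ambient quadric (Lemma~\ref{quadric matrix}), and then restricts to $X$; your version stays on $\mbb{P}^n$ with degree-$d$ entries, uses the equivalent exterior-algebra model of the same Clifford representation, and identifies the sheaf directly via the $\pi_*\mcO_X$-module dictionary on $\mcPn^{\oplus N}$, which is a little more streamlined but mathematically the same construction. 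The paper also gives a second, genuinely different proof (Proof~(II)) by induction on dimension via Theorem~\ref{induction theorem} and Lemma~\ref{vb direct image}; your proposal does not parallel that one.
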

We give two different proofs of the above theorem in $\S$ \ref{higher double covers}. The first approach involves embedding $\mbb{P}^n$ in a higher dimension projective space $\mbb{P}^N$ by the $d$-uple embedding and understanding the existence of Ulrich sheaves on double covers of $\mbb{P}^N$ branched along quadrics. In this regard, we use the well known and seminal result of Herzog, Ulrich and Backelin \cite{HUB} regarding the existence of Ulrich sheaves on complete intersection varieties $Z$. 
We reproduce their proof in the special case when $Z$ is quadric hypersurface for the sake of completeness, cf. Theorem \ref{quadric hypersurface}.

In the second approach, we consider appropriate complete intersection subvarieties $Y$ inside $\mbb{P}^n$ and their double covers $\widetilde{Y}$ inside $X$. We prove that the existence of vector bundles on $\widetilde{Y}$ whose direct image in $Y$ is trivial give rise to Ulrich bundles on $X$, cf. Theorem \ref{induction theorem}. We then show that such vector bundles indeed exist on $\widetilde{Y}$, cf. Lemma \ref{vb direct image}.

We make some observations about the ranks of the Ulrich bundles obtained on the double covers $X$ of $\mbb{P}^n$ when $n\geq 3$ in $\mathcal{x}\,$ \ref{rank section}. Theorem \ref{max rank UB} gives an upper bound for the minimal rank of Ulrich bundles on double covers $X$ of $\mbb{P}^n$ when $n\geq 3$. However, computing the minimal rank is still an open question which we are investigating.

\section{Preliminaries}\label{prelim}
\subsection{Ulrich bundles}\label{prelim-ub}
Here we briefly introduce Ulrich bundles, cf. \cite{AB} and \cite{Co} for thorough surveys on Ulrich bundles.
\newtheorem{defn}[thm]{Definition}
\begin{defn}\label{ulrich}
Let $X$ be a smooth projective variety over $\mbb{C}$ together with a finite morphism $\pi:X\ra\mbb{P}^{\,\emph{dim}\,X}$ of degree $m$. A vector bundle $E$ of rank $r$ on $X$ is said to be Ulrich with respect to the morphism $\pi$ if $\pi_*E=\mcO_{\mbb{P}^{\,\emph{dim}\,X}}^{\oplus\,mr}$. 

If we denote $\pi^*\mcO_{\mbb{P}^{\,\emph{dim}\,X}}(1)=\mcO_X(1)$, then $E$ is Ulrich on $X$ if and only if $H^i(X,E(-p))=0$ for $1\leq p\leq\emph{dim}\,X$ and for all $i$.
\end{defn}
\subsection{Secant varieties}\label{Secant varieties prelim}
Given a subvariety $X$ of $\mbb{P}^n$, we can define the higher order secant varieties of $X$ as follows.
\begin{defn}\label{join}
Let $X$ be a variety in $\mbb{P}^n$, then the Secant variety of order $(r-1)$ of $X$ is the closure of the union of all
linear subspaces of $\mbb{P}^n$ spanned by $r$ points in $X$. More precisely,
$$\emph{Sec}_{r-1}X=\overline{\bigcup_{v_1,v_2,\ldots,v_r\in {X}}\langle{v_1,v_2,\ldots,v_r}\rangle}\subset \mbb{P}^n$$
where for $S\subset \mbb{P}^n$, the notation $\langle S \rangle$ denotes
the smallest linear subspace of $\mbb{P}^n$ containing $S$.
\end{defn}

We are interested in the higher order secant varieties of a particular projective variety $\mbb{X}$ which is the set of degree $2d$ hypersurfaces in $\mbb{P}^n$ which
are reducible into two degree $d$ hypersurfaces, i.e.   
\begin{equation}\label{reducible 2d hypersurfaces}
\mbb{X}=\{[F]\in \mbb{P}H^0(\mbb{P}^n,\mcPn(2d))\,|\,F=F_1F_2 \text{ and } F_i\in H^0(\mbb{P}^n,\mcPn(d))\}\,.
\end{equation}
It is known that $\mbb{X}\subset \mbb{P}H^0(\mbb{P}^n,\mcPn(2d))$ is a subvariety with dimension 
$2{d+n\choose n}-2$, cf. \cite{Mam}, \cite{CCG}. 

Note that if $[F]\in \text{Sec}_{r-1}\mbb{X}$, then $F$ can be
written as
$$F=\Sigma_{i=1}^r F_i G_i\ \text{where }F_i,G_i\in H^0(\mbb{P}^n,\mcPn(d))\,.$$
It is clear that we have a chain of inclusions
$$\mbb{X}\subset \text{Sec}_{1}\mbb{X}\subset\cdots\subset 
\text{Sec}_{r-1}\mbb{X}\subset \text{Sec}_{r}\mbb{X}\subset\cdots\subset\mbb{P}V\,.$$
We will see in Lemma \ref{filtration covers} that there is an integer $m$ such that $\text{Sec}_{m}\mbb{X}$ covers $\mbb{P}V$.
\section{Polynomial expression of a plane curve}\label{SSP section}
Let $F\in \mbb{C}[x,y,z]$ be a smooth homogeneous polynomial of degree $2d$. Let $[F]\in\mbb{P}H^0(\mbb{P}^2,\mcOP(2d))$ denote the corresponding hypersurface of $\mbb{P}^2$. In \cite{CCG}, the authors show that $[F]\in\text{Sec}\,_1\mbb{X}$ where $\mbb{X}$ is described in equation \eqref{reducible 2d hypersurfaces}, i.e. $F$ can be written in the form $$F=F_1G_1+F_2G_2\,,$$
where $F_1,G_1,F_2,G_2\in\mbb{C}[x,y,z]$ are homogeneous polynomials of degree $d$. We broadly explain their ideas in our context. The following is the key lemma.
\begin{lemma}\label{Carlini}\cite[Lemma 4.1, Lemma 4.3]{CCG}
Let $S=\mbb{C}[x,y,z]$. The following statements are equivalent. 
\begin{enumerate}
 \item The generic degree $2d$ curve $(F=0)$ in $\mbb{P}^2$ can be written in the form $F=F_1G_1+F_2G_2$ where $F_1,F_2,G_1,G_2\in S$ are of degree $d$.
 \item The Secant variety $\emph{Sec}_1\mbb{X}$ covers the space of degree $2d$ forms in 3 variables i.e.
 $$\emph{Sec}_1\mbb{X}=\mbb{P}(S_{2d}).$$ 
 \item Let $F_1,F_2,G_1,G_2\in S$ be generic forms of degree $d$, then 
 $$H\Big(\frac{S}{(F_1,F_2,G_2,G_1)},2d\Big)=0$$
 where $H(\_,2d)$ denotes the Hilbert function in degree $2d$ of the ring.
\end{enumerate}
\end{lemma}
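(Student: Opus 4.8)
The plan is to read all three conditions off the single multiplication map
\[
\Phi\colon S_d^{\oplus 4}\lr S_{2d},\qquad \Phi(F_1,G_1,F_2,G_2)=F_1G_1+F_2G_2 .
\]
The affine cone over $\mbb{X}\subset\mbb{P}(S_{2d})$ is $\widehat{\mbb{X}}=\{F_1F_2:F_i\in S_d\}$, itself a cone, so the union of the linear spans in $S_{2d}$ of pairs of points of $\widehat{\mbb{X}}$ equals $\widehat{\mbb{X}}+\widehat{\mbb{X}}=\{F_1F_2+G_1G_2:F_i,G_i\in S_d\}=\mathrm{im}\,\Phi$; consequently the affine cone over $\text{Sec}_1\mbb{X}$ is $\overline{\mathrm{im}\,\Phi}$. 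Now $\mathrm{im}\,\Phi$ is constructible (Chevalley) and a cone, and a constructible subset of the irreducible variety $S_{2d}$ contains a dense open set if and only if it is dense; hence statement $(1)$, that the generic degree $2d$ form lies in $\mathrm{im}\,\Phi$, is equivalent to $\overline{\mathrm{im}\,\Phi}=S_{2d}$, that is, to statement $(2)$: $\text{Sec}_1\mbb{X}=\mbb{P}(S_{2d})$.

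For $(2)\Leftrightarrow(3)$ I would use generic smoothness (we are over $\mbb{C}$): $\Phi$ is dominant if and only if its differential is surjective at a general point of $S_d^{\oplus 4}$. The differential of $\Phi$ at $(F_1,G_1,F_2,G_2)$ sends $(u_1,v_1,u_2,v_2)$ to $u_1G_1+F_1v_1+u_2G_2+F_2v_2$, so its image is
\[
S_d F_1+S_d G_1+S_d F_2+S_d G_2=(F_1,F_2,G_1,G_2)_{2d},
\]
the degree-$2d$ graded piece of the ideal $(F_1,F_2,G_1,G_2)\subset S$. Therefore $\Phi$ is dominant --- equivalently, by the first paragraph, $\text{Sec}_1\mbb{X}=\mbb{P}(S_{2d})$ --- if and only if $(F_1,F_2,G_1,G_2)_{2d}=S_{2d}$ for general forms $F_i,G_i$ of degree $d$, which is the assertion $\bigl(S/(F_1,F_2,G_1,G_2)\bigr)_{2d}=0$, i.e.\ $H\bigl(S/(F_1,F_2,G_1,G_2),2d\bigr)=0$. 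This is statement $(3)$, closing the cycle. Geometrically the displayed equality is just Terracini's lemma for $\mbb{X}$: the affine tangent space to $\widehat{\mbb{X}}$ at a general point $F'F''$ is $(F',F'')_{2d}$, and that to $\widehat{\text{Sec}_1\mbb{X}}$ at a general point is the sum of two such spaces.

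I do not anticipate a genuine obstacle: the real content is choosing the parametrization $\Phi$, after which everything is the Jacobian criterion plus the elementary identification of $\mathrm{im}(d\Phi)$ with a graded piece of an ideal. The one place to be slightly careful is reconciling the two occurrences of ``general'' --- the general point of $S_d^{\oplus 4}$ in the Jacobian criterion and the ``general forms'' of statement $(3)$ --- but these coincide because $\mathrm{rank}\,(d\Phi)$ is constant and maximal on a dense open subset of $S_d^{\oplus 4}$, and likewise $H\bigl(S/(F_1,F_2,G_1,G_2),2d\bigr)$ is constant on a dense open. (Were one to route the argument through Terracini's lemma instead, one would additionally need that $\mbb{X}$, though singular along the locus of squares $[F^2]$, is smooth at its general point, and that the abstract secant variety dominates $\text{Sec}_1\mbb{X}$, so that a general point of $\text{Sec}_1\mbb{X}$ is represented by $F_1G_1+F_2G_2$ with the four forms jointly general.)
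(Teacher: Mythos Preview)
The paper does not give its own proof of this lemma; it is quoted from \cite[Lemma~4.1, Lemma~4.3]{CCG} and then used as a black box, so there is no in-paper argument to compare your proposal against.

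Your argument is correct. The identification of the affine cone over $\text{Sec}_1\mbb{X}$ with $\overline{\mathrm{im}\,\Phi}$, using that $\widehat{\mbb{X}}$ is already a cone so that spans of pairs coincide with sums, cleanly gives $(1)\Leftrightarrow(2)$ via Chevalley. The computation $\mathrm{im}\bigl(d\Phi_{(F_1,G_1,F_2,G_2)}\bigr)=(F_1,F_2,G_1,G_2)_{2d}$ is correct, and combined with generic smoothness over $\mbb{C}$ (dominance of $\Phi$ $\Leftrightarrow$ surjectivity of $d\Phi$ at a general point) it gives $(2)\Leftrightarrow(3)$. Your remark that this is Terracini's lemma read through the parametrization $\Phi$ is on the mark and is the standard way these equivalences are proved; nothing further is needed.
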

In order to prove that a generic $F\in S_{2d}$ has an expression of the form 
$$F=F_1G_1+F_2G_2 $$
where $F_1,F_2,G_1,G_2\in S_d$, the authors in \cite{CCG} use condition (3) of the above lemma i.e. they show that the corresponding Hilbert function is zero. In particular, we refer to the proof of \cite[Theorem 5.1]{CCG}. Here they consider the ring 
\begin{equation}\label{Artin ring}
A=\frac{\mbb{C}[x,y,z]}{(F_1,F_2,G_2)}
\end{equation}
where $F_1,F_2,G_2$ are generic forms of degree $d$. They then show that $H\Big(\frac{A}{\overline{G_1}},2d\Big)=0\,,$
where $\overline{G_1}$ is the image of of a generic degree $d$ form $G_1\in \mbb{C}[x,y,z]$ in the ring $A$. Showing $H\Big(\frac{A}{\overline{G_1}},2d\Big)=0$ is equivalent to showing that the multiplication by $\overline{G}_1$ map:
\begin{equation}\label{multiplication map}
m:A_d\ra A_{2d} 
\end{equation}
is surjective. 

We now prove condition (3) of Lemma \ref{Carlini} by showing that the morphism $m$ above is surjective for generic choices of $F_1,F_2, G_1, G_2$, by considering certain extensions of vector bundles in this upcoming subsection.
\subsection{Extensions}
Let $C$ be a smooth projective curve
of genus $g$ with canonical bundle $K_C$. Let $L$ be a line bundle with degree greater than $2g - 2$. We wish to consider
extensions,
$$0\ra \mcO_C\ra E\ra L\ra 0\,.$$
These are parametrized by $\text{Ext}^1(L,\mcO_C)=H^1(L^{-1})$. We denote this vector space (or affine space when applicable) by $V$. 

Let $V_s\subset V$ be the subset consisting of elements $\eta \in V$ such that the corresponding extension $E_{\eta}$ has $H^1(E_{\eta})\neq 0$. Dual to our extension, we have
$$0\ra L^{-1}\otimes K_C\ra E^{\vee}\otimes K_C\ra K_C\ra 0\,.$$
Let $M=L^{-1}\otimes K_C$ and $F=E^{\vee}\otimes K_C$, then $\eta\in V_s\subset V=\text{Ext}^1(K_C,M)$ is just the subset where $H^0(F_{\eta})\neq 0$ by Serre duality. Note that $\text{deg}\,M=-e$ with $e>0$ by our assumption on the degree of $L$.

\begin{thm}\label{3g-3}
 Let $M$ be a line bundle of degree $-e$, with $e > 0$ and notation as above. Then $V_s\subset V$ is a closed subset of dimension at most $3g - 3$,
where for $g = 0$, this is to be understood as $V_s=\emptyset$.
\end{thm}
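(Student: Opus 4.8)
The plan is to reformulate membership in $V_s$ as a cup-product condition and then realise $V_s$ as the image of a suitable incidence variety. First I would write the extension attached to $\eta\in V$ as $0\ra M\ra F_\eta\ra K_C\ra 0$ and recall $V=\mathrm{Ext}^1(K_C,M)=H^1(K_C^{-1}\otimes M)$. Since $\deg M=-e<0$ we have $H^0(M)=0$, so the long exact cohomology sequence shows $H^0(F_\eta)\hra H^0(K_C)$ is injective; hence $H^0(F_\eta)\neq 0$ iff some nonzero $\bar s\in H^0(K_C)$ lifts to $F_\eta$, and $\bar s$ lifts exactly when the connecting homomorphism $H^0(K_C)\ra H^1(M)$ — which is cup product with the class $\eta$ — annihilates $\bar s$. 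Thus
$$V_s=\{\,\eta\in V\ :\ \bar s\cup\eta=0\ \text{in}\ H^1(M)\ \text{for some}\ 0\neq\bar s\in H^0(K_C)\,\}.$$
In particular, if $g=0$ then $H^0(K_C)=0$ and $V_s=\emptyset$, as claimed; so from now on assume $g\geq 1$.

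Next I would introduce the incidence variety
$$Z=\{\,([\bar s],\eta)\in\mbb{P}H^0(K_C)\times V\ :\ \bar s\cup\eta=0\ \text{in}\ H^1(M)\,\}.$$
Because cup product is bilinear, $Z$ is cut out by bihomogeneous equations and hence closed; the projection $Z\ra V$ is proper (its source is projective over $V$) with image exactly $V_s$, which already shows $V_s$ is closed and $\dim V_s\leq\dim Z$. The fibre of $Z$ over $[\bar s]$ is the linear subspace $\ker\!\big(\cup\bar s\colon V\ra H^1(M)\big)$, so it suffices to bound these fibres uniformly, and the key step is to prove that $\cup\bar s\colon H^1(K_C^{-1}\otimes M)\ra H^1(M)$ is surjective for every nonzero $\bar s$. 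Granting this, each fibre has dimension $\dim V-\dim H^1(M)$, and a Riemann–Roch count — using $\deg M=-e<0$ and $g\geq 1$, so that $H^0(M)=0$ and $H^1(K_C^{\otimes 2}\otimes M^{-1})=0$ since $\deg(K_C^{\otimes 2}\otimes M^{-1})=4g-4+e>2g-2$ — gives $\dim V=3g-3+e$ and $\dim H^1(M)=g-1+e$. Hence every fibre of $Z\ra\mbb{P}H^0(K_C)$ has dimension $2g-2$, so $\dim Z=(g-1)+(2g-2)=3g-3$, and therefore $\dim V_s\leq 3g-3$.

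For the surjectivity claim: a nonzero $\bar s\in H^0(K_C)$ vanishes along an effective divisor $D$ of degree $2g-2$ with $\mcO_C(D)\cong K_C$, which yields the short exact sequence $0\ra K_C^{-1}\otimes M\xra{\ \bar s\ }M\ra M\otimes\mcO_D\ra 0$. Since $M\otimes\mcO_D$ is supported on the finite scheme $D$ it has $H^1=0$, so the induced map $H^1(K_C^{-1}\otimes M)\ra H^1(M)$ — which is precisely cup product with $\bar s$ viewed as an element of $\mathrm{Hom}(K_C^{-1}\otimes M,M)=H^0(K_C)$ — is surjective. The main points requiring care are the identifications of the connecting map of the extension and of the boundary map of this last sequence with cup product, together with the closedness of $Z$; once surjectivity is reduced to the vanishing of $H^1$ of a torsion sheaf the remainder is Riemann–Roch bookkeeping, so I do not expect a genuine obstacle beyond keeping these functorialities straight.
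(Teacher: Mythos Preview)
Your proof is correct and follows essentially the same approach as the paper: both introduce the same incidence variety $Z\subset\mbb{P}H^0(K_C)\times V$, observe its image in $V$ is $V_s$, and bound $\dim Z$ by computing the fibres over $\mbb{P}H^0(K_C)$ to have dimension $2g-2$. The only cosmetic difference is that the paper identifies the fibre over $[\alpha]$ directly as $\mathrm{Ext}^1(T,M)$ for the length-$(2g-2)$ torsion quotient $T=K_C/\mcO_C$ (via the pullback-diagram description of when $\alpha$ lifts), whereas you phrase the same map in cup-product language and obtain the fibre dimension by proving surjectivity of $\cup\bar s\colon V\ra H^1(M)$ and applying rank--nullity; the two computations are the same map viewed through $\mathrm{Hom}(-,M)$ versus $M\otimes(-)$.
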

\begin{proof}
 If $g = 0$, the result is obvious since both $M$ and $K_C$ have negative degrees and thus every extension has zero global sections. So,
we assume $g > 0$ for the rest of the proof.

Let $\mbb{P}=\mbb{P}(H^0(K_C))=\mbb{P}^{g-1}$ , the set of non-zero sections of $K_C$ .
Consider $Z \subset \mbb{P} \times V = X$ consisting of $(\alpha,\eta)$ where $\alpha$ is a non-zero section of $K_C$ and $\alpha$ lifts to $F_{\eta}$ for the extension 
$$0\ra M \ra F_{\eta} \ra K_C \ra 0\,.$$
Thus, the image of $Z$ in $V$ under projection is contained in $V_s$.
Conversely, if $\eta\in V_s$, we have a non-zero section of $F_{\eta}$ , which by
composition gives a non-zero section of $K_C$, since $\text{deg}\,M<0$. This
says that $V_s$ is precisely the image of $Z$ under the projection to $V$.

On $\mathbb{P}$, we have the universal map
$\mcO_{\mathbb{P}}(-1)\ra H^0(K_C)\otimes \mcO_{\mathbb{P}}$ which we pull back to $X$. Similarly, on $V$ we have the universal map $H^0( K_C )\otimes\mcO_V\ra
H^1 ( M )\otimes\mcO_V$ , cupping with $\eta\in V = H^1 ( M \otimes K_C^{-1} )$ which also can be pulled back to $X$. Then we get a composition $\mcO_X(-1)\ra H^1( M )\otimes \mcO_X$ . Then $Z$
is just the points on $X$ where this map is zero and thus $Z$ is a closed
subset of X and since $V_s$ is the image under the proper map to $V$, we
see that $V_s$ is a closed subset of $V$.

We calculate the dimension of $Z$ by looking at the projection to $\mbb{P}$.
If $(\alpha,\eta)\in Z$ , we look at the pull back diagram,
\begin{displaymath}
 \xymatrix{0 \ar[r] & M \ar[r] & G \ar[d]\ar[r] & \mcO_C\ar[r]\ar[d]^{\alpha}\ar@^{.>}[dl] & 0\\
 0 \ar[r] & M\ar@^{=}[u]\ar[r] & F_{\eta} \ar[r] & K_C\ar[r] & 0}
\end{displaymath}
The section $\alpha$ lifts to $F_{\eta}$ says the sequence $0 \ra M \ra G \ra\mcO_C \ra 0$
splits and conversely. So the fibre of $\alpha\in \mbb{P}$ are the extensions in the kernel of the induced map, $V = \text{Ext}^1 ( K_C , M ) \ra
\text{Ext}^1 (\mcO_C , M )$ .

We have an exact sequence 
$$ 0\ra \mcO_C \ra K_C \ra T \ra 0 $$ where $T$
is a sheaf of length $2g - 2$. 
Taking $\text{Hom}(\underline{\hspace{0.3 cm}}\,,M)$ , we get,
$$\ra 0 = H^0 ( M ) \ra \text{Ext}^1 ( T, M ) \ra \text{Ext}^1 ( K_C , M ) = V \ra \text{Ext}^1 (\mcO_C , M )\ra \,.$$
The kernel of the map $V \ra \text{Ext}^1 (\mcO_C,M)$ , which is precisely $\text{Ext}^1( T, M )$ is
a vector space of dimension $2g - 2$. Thus, $Z \ra \mathbb{P}$ is an affine bundle
of fiber dimension $2g - 2$ and $\text{dim}\, Z = 3g- 3$. So, $\text{dim}\, V_s \leq
3g - 3$.
\end{proof}
\newtheorem{coro}[thm]{Corollary}
\newtheorem{rmk}[thm]{Remark}
\begin{coro}
For a general element $\eta\in V$, the extension 
$$0 \ra \mcO_C \ra E_{\eta} \ra L \ra 0$$
where $\emph{deg}\,L > 2g - 2$ has $H^1 ( E_{\eta} ) = 0$.
\end{coro}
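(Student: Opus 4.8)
The plan is to obtain the corollary as an immediate consequence of Theorem \ref{3g-3} together with a Riemann--Roch computation of $\dim V$. Since ``general'' here means ``lying in the complement of a proper closed subset'', and Theorem \ref{3g-3} already exhibits $V_s\subset V$ as a closed subset of dimension at most $3g-3$, it suffices to prove $\dim V>3g-3$. Once this is known, $V_s$ is a proper closed subset of $V$, so $V\setminus V_s$ is a nonempty (Zariski open, hence dense) subset, and by the very definition of $V_s$ every $\eta\in V\setminus V_s$ gives an extension $E_\eta$ with $H^1(E_\eta)=0$.

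First I would dispose of the case $g=0$, where $V_s=\emptyset$ by Theorem \ref{3g-3} and there is nothing to prove. For $g\geq 1$ I would compute $\dim V$. By definition $V=\text{Ext}^1(L,\mcO_C)=H^1(C,L^{-1})$, and Serre duality identifies this with $H^0(C,K_C\otimes L)^{\vee}$, so $\dim V=h^0(C,K_C\otimes L)$. Since $\deg L>2g-2$, we have $\deg(K_C\otimes L)=(2g-2)+\deg L>2g-2$, so $K_C\otimes L$ is nonspecial and Riemann--Roch gives $\dim V=h^0(K_C\otimes L)=\deg(K_C\otimes L)-g+1=\deg L+g-1$.

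Finally I would compare the two dimensions. The hypothesis $\deg L>2g-2$, which for an integer degree means $\deg L\geq 2g-1$, yields
$$\dim V=\deg L+g-1\geq 3g-2>3g-3\geq \dim V_s\,.$$
Hence $V_s\subsetneq V$ and the corollary follows.

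I do not expect any genuine obstacle in this argument: all the substance is already contained in Theorem \ref{3g-3}. The only points requiring a little care are the small-genus edge cases (the case $g=0$ is vacuous; in the case $g=1$ one has $3g-3=0$, and one must still note $\deg L\geq 1$ so that $\dim V=\deg L>0$), and correctly tracking the Serre-duality identification $H^1(C,L^{-1})^{\vee}\cong H^0(C,K_C\otimes L)$ so that the Riemann--Roch count is applied to a line bundle of degree exceeding $2g-2$.
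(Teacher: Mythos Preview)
Your argument is correct and follows exactly the paper's approach: the paper's proof is the single line $\dim V=\deg L+g-1>3g-3$, and you have simply filled in the Riemann--Roch/Serre duality justification for that equality together with the comparison to $\dim V_s$ from Theorem \ref{3g-3}.
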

\begin{proof}
 We only need to show that $V_s\neq V$. But 
 $$\text{dim}\, V = \text{deg}\, L + g -1>3g-3\,.$$
\end{proof}
We next have the following definition.
\begin{defn}
 For a vector bundle $E$, we say a point $P\in C$ is a base point,
if the natural map $H^0 ( E ) \otimes \mcO_C \ra E$ is not surjective at $P$, That is, the
cokernel has $P$ in its support.
\end{defn}
We denote by $V_{bp}$ , the set of $\eta\in V=H^1 ( L^{-1} ) = \text{Ext}^1 ( L, \mcO_C )$ such that the corresponding extension $E_{\eta}$ has a base point.
\begin{thm}\label{3g-1}
Assume that $\emph{deg}\,L = r > 2g$. Then $V_{bp}$ is contained in a subvariety of dimension at most $3g - 1$.
\end{thm}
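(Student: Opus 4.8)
The plan is to mimic the structure of the proof of Theorem \ref{3g-3}, replacing the locus where a section of $F_\eta$ exists by the locus where a section of $E_\eta$ fails to be surjective at some point. So I would introduce the incidence variety
\[
W = \{(P,\eta)\in C\times V \,:\, E_\eta \text{ has a base point at } P\}\subset C\times V,
\]
show it is closed, bound the dimension of each fibre of the projection $W\to C$, conclude $\dim W\le 1+(\text{fibre dimension})$, and note $V_{bp}$ is the image of $W$ under the proper projection to $V$, hence a closed subvariety of the same dimension bound. The target bound $3g-1$ suggests the generic fibre of $W\to C$ should have dimension $3g-2$.

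The key computation is the fibre over a fixed $P\in C$. Having a base point at $P$ means the evaluation $H^0(E_\eta)\otimes\mcO_C\to E_\eta$ is not surjective at $P$; twisting the defining sequence $0\to\mcO_C\to E_\eta\to L\to 0$ by $\mcO_C(-P)$ and chasing cohomology, I expect this to be equivalent to the condition that the connecting/restriction data at $P$ degenerates, which should translate into $\eta$ lying in a linear (or affine) subspace of $V=H^1(L^{-1})$ cut out by the requirement that the section of $L$ vanishing to appropriate order at $P$ lifts to $E_\eta$ — concretely, that $\eta$ maps to $0$ under $H^1(L^{-1})\to H^1(L^{-1}(P))$, or a nearby variant. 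Using $\deg L=r>2g$, one computes $h^1(L^{-1}(P))$ and $h^1(L^{-1})$ by Riemann–Roch: $h^1(L^{-1})=r+g-1$ while $h^1(L^{-1}(P))=r-1+g-1=r+g-2$, so the kernel of that map is $1$-dimensional. That is too small, so the correct fibre description must instead be: $E_\eta$ has a base point at $P$ iff the image of $H^0(E_\eta)$ in the fibre $E_\eta\otimes k(P)$ is a proper subspace, equivalently some section of $E_\eta(-P)$ meets a codimension condition. I would make this precise by the standard device: $P$ is a base point of $E_\eta$ exactly when $h^0(E_\eta(-P)) > h^0(E_\eta)-\operatorname{rk}(E_\eta)=h^0(E_\eta)-2$, and then analyze $0\to\mcO_C(-P)\to E_\eta(-P)\to L(-P)\to 0$ to see for which $\eta$ this jump occurs.

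Concretely I would set up, over $C$, the universal version: on $C\times V$ form the universal extension sheaf $\mathcal{E}$, let $p:C\times V\to V$, and consider the locus where $R^1p_*$ of $\mathcal{E}\otimes(\text{ideal of the diagonal-type section})$ jumps; fibrewise over $P\in C$ this is a determinantal/linear-algebra condition on $\eta$. The fibre over $P$ of $W$ is then $\{\eta : \alpha_P \text{ lifts to } E_\eta(-P)\text{-style condition}\}$, and by the same kernel-of-$\mathrm{Ext}$ argument as in Theorem \ref{3g-3} it is cut out inside $V$ by the vanishing of a map $V\to H^1(\text{something of controlled rank})$, giving a fibre of dimension $\le \dim V - (\text{codim}) = 3g-2$. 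Adding the base $\dim C = 1$ gives $\dim W\le 3g-1$, hence $\dim V_{bp}\le 3g-1$.

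The main obstacle, and the only real subtlety, is identifying exactly which cohomological condition on $\eta$ is equivalent to "$P$ is a base point of $E_\eta$" and pinning down the resulting codimension in $V$ uniformly in $P$ — the naive guess (vanishing of $\eta$ in $H^1(L^{-1}(P))$) gives the wrong, too-large, fibre, so one must instead track the failure of surjectivity of $H^0(E_\eta)\otimes k(P)\to E_\eta\otimes k(P)$ via the snake lemma applied to the twist by $\mcO_C(-P)$, using $\deg L>2g$ to guarantee $H^1(L^{-1})\to H^1(L^{-1}(P))$ is surjective and $H^1(E_\eta)=0$ for generic $\eta$ (from the Corollary) so that the jump locus has the expected dimension. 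Once that codimension count is in place, the closedness of $W$ and properness of $W\to V$ are formal, exactly as in the proof of Theorem \ref{3g-3}, using a universal map $\mcO_{C\times V}\to (R^1p_*)(\cdots)$ whose vanishing locus is $W$.
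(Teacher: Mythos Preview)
Your overall architecture is right and matches the paper: set up the incidence locus in $C\times V$, project to $C$, bound the fibre dimension by $3g-2$, and conclude. What is genuinely missing is the identification of the fibre over a fixed $P\in C$, and here your proposal does not land on the key idea.

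The paper does \emph{not} characterize the fibre as a linear subspace of $V$ cut out by the vanishing of some map $V\to H^1(\cdots)$, as you suggest. Instead, it first restricts to the open set $U=V\setminus V_s$ (this is harmless since $\dim V_s\le 3g-3$), so that $H^1(E_\eta)=0$. For $\eta\in U$, if $P$ is a base point of $E_\eta$, the image of $H^0(E_\eta)\to H^0(L)$ factors through $H^0(L(-P))$; pulling back the extension along $L(-P)\hookrightarrow L$ yields
\[
0\to \mcO_C\to F\to L(-P)\to 0
\]
with $H^0(F)=H^0(E_\eta)$. A dimension count (using $H^1(E_\eta)=0$ and $h^0(L(-P))=h^0(L)-1$) forces $H^1(F)\neq 0$. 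In other words, the image of $\eta$ under the natural surjection $V=\mathrm{Ext}^1(L,\mcO_C)\twoheadrightarrow W:=\mathrm{Ext}^1(L(-P),\mcO_C)$ lies in the special locus $W_s\subset W$. Now Theorem~\ref{3g-3} applies to $L(-P)$ (since $\deg L(-P)>2g-2$) and gives $\dim W_s\le 3g-3$; the kernel of $V\to W$ is the one-dimensional $\mathrm{Ext}^1(L_P,\mcO_C)$, so the fibre has dimension $\le 3g-2$.

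So the step you flag as ``the only real subtlety'' is exactly where the argument happens, and it is \emph{not} a linear codimension count but a reduction to Theorem~\ref{3g-3} for the line bundle $L(-P)$. Your guess that the fibre is cut by a linear map $V\to H^1(\text{something})$ cannot work uniformly: the locus $W_s$ is generally not linear. Once you see this reduction, the rest of your outline (closedness, properness of the projection to $V$) goes through as you describe.
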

\begin{proof}
 We restrict our attention to the open set $U = V -V_s$ , and
$V_{bp} \cap U$, since $\text{dim} V_s \leq 3g - 3$ by the previous theorem.
Let $X = C \times U$ and $p : X \ra C$, $q : X \ra U$ the two projections. On
$X$ we have the universal extension,
$$0\ra \mcO_X\ra \mcE \ra p^*L \ra 0\,.$$
Since for any $\eta\in U$, one has $H^1( E_{\eta} ) = 0$, we see that $q_*\mcE$ is a vector bundle ($q$ is proper and flat) and so we get a map $q^*q_* \mcE \ra \mcE$ . Let $Z$
be the support of the cokernel of this map, which is a closed subset
of $X$. It is immediate that $q ( Z ) = V_{bp} \cap U$ and thus, it is a closed
subset of $U$.

We calculate the dimension of $Z$ by using the projection $p$. For a point $P\in C$, the fibre in $Z$ over $p$ is the set of all $\eta\in U$ such that $P$ is a base point of $E_{\eta}$.


A pair $(P,\eta)\in Z$ if and only if $P$ is
a base point of $E_{\eta}$ . However, this says, the image of $H^0( E_{\eta} ) \ra H^0 ( L )$ factors through $H^0(L(- P))$.



We look at the following pullback diagram.
\begin{displaymath}
 \xymatrix{
 0\ar[r] & \mcO_C\ar[r]\ar@{=}[d] & F\ar[r]\ar[d] & L(-P)\ar[r]\ar[d] & 0\\
 0\ar[r] & \mcO_C\ar[r] & E_{\eta}\ar[r] & L\ar[r] & 0}
\end{displaymath}
Note that $H^0(E_{\eta})\ra H^0(L_P)$ is the zero map where $L_P$ comes from the exact sequence
$$0\ra L(-P)\ra L\ra L_P\ra 0\,.$$
This gives us $H^0(F)=H^0(E_{\eta})$.

If $e$ is the dimension of the image of $H^0(E_{\eta}) \ra H^0(L)$, one has
$e + g = h^0(L)$, since $H^1( E_{\eta} ) = 0$. But, by assumption, $e$ is also the dimension of the image of $H^0(F) \ra H^0(L(-P))$ and since $h^0(L(-P))=h^0(L) - 1$, we see that $H^1 (F)  \neq 0$.

Let $W = H^1 ( L^{-1}(P)) = \text{Ext}^1(L(-P),\mcO_C)$, and $W_s\subset W$ be defined as earlier. Since $H^1( F ) \neq 0$, the class corresponding to that extension in $W$ must infact be in $W_s$ . 

Taking $\text{Hom}(\underline{\hspace{0.3cm}}\,, \mcO_C )$ of the exact sequence,
$$0 \ra L(-P) \ra L \ra L_P \ra 0\,$$
we get,
$$0 \ra \text{Ext}^1(L_P,\mcO_C) \ra V \ra W \ra 0\,.$$
Consider $(P,\eta)\in Z$. Then the image of $\eta$ under the map $V\ra W$ lands in $W_s$ as seen above. Thereby, such an $\eta$ is in the inverse image of $W_s$. 

Since $\text{deg}(L(-P))=\text{deg}\,L-1>2g-2$, by Theorem \ref{3g-3}, we get that $\text{dim}\, W_s$ is atmost $3g-3$. Also 
$\text{Ext}^1(L_P,\mcO_C)$ is a one dimensional vector space. So the inverse image
of $W_s$ in $V$ has dimension at most $3g - 2$. This is true for any $P \in C$
and thus $\text{dim}\,Z \leq 3g -1$, which implies $\text{dim}\,V_{bp} \cap U \leq 3g - 1$.
\end{proof}
\begin{coro}
 If $\emph{deg}\,L > 2g$, a general extension $$ 0 \ra \mcO_C \ra E \ra L \ra 0$$
has $H^1(E)=0$ and $E$ is globally generated.
\end{coro}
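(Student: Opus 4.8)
The plan is to deduce this corollary directly from the two dimension estimates just proved, Theorem \ref{3g-3} and Theorem \ref{3g-1}, together with an elementary computation of $\dim V$. Recall $V=\text{Ext}^1(L,\mcO_C)=H^1(L^{-1})$, that $V_s\subset V$ (the locus where $H^1(E_\eta)\neq 0$) has dimension at most $3g-3$, and that $V_{bp}\subset V$ (the locus where $E_\eta$ has a base point) is contained in a subvariety of dimension at most $3g-1$. By the definition of base point, a vector bundle on $C$ is globally generated exactly when it has no base point, so it is enough to exhibit an $\eta\in V$ lying outside $V_s\cup V_{bp}$; for such $\eta$ the extension $0\to\mcO_C\to E_\eta\to L\to 0$ automatically has $H^1(E_\eta)=0$ and $E_\eta$ globally generated.

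The key step is the dimension count for $V$. Assuming $g>0$ (the case $g=0$ is handled separately below), the hypothesis $\deg L>2g>0$ gives $\deg L^{-1}<0$, hence $h^0(L^{-1})=0$, and Riemann--Roch yields $\dim V=h^1(L^{-1})=\deg L+g-1$. Since $\deg L>2g$, this gives $\dim V>3g-1$. Therefore $V_s$ and $V_{bp}$ are each contained in a closed subset of $V$ of dimension strictly less than $\dim V$, so $V_s\cup V_{bp}$ is a proper closed subset of the irreducible $V$, and a general $\eta\in V$ avoids it. For this $\eta$ we get $H^1(E_\eta)=0$ because $\eta\notin V_s$ and $E_\eta$ globally generated because $\eta\notin V_{bp}$, as desired.

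For $g=0$ one has $V_s=\emptyset$ by Theorem \ref{3g-3}, and the same comparison $\dim V=\deg L-1\geq 0>-1=3g-1$ (using $\deg L>2g=0$) shows $V_{bp}$ is empty as well, so every extension works. I do not anticipate a genuine obstacle here: essentially all the content has already been absorbed into Theorems \ref{3g-3} and \ref{3g-1}, and what remains is bookkeeping. The only point that requires a little care is that the \emph{strict} inequality $\deg L>2g$ (which is exactly the hypothesis of Theorem \ref{3g-1}) is precisely what is needed to make $\dim V$ exceed the bound $3g-1$; with only $\deg L=2g$ this argument would fail.
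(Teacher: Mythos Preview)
Your proof is correct and follows essentially the same approach as the paper: compute $\dim V=\deg L+g-1$ by Riemann--Roch, compare with the bounds $\dim V_s\leq 3g-3$ and $\dim V_{bp}\leq 3g-1$ from Theorems \ref{3g-3} and \ref{3g-1}, and conclude that a general $\eta$ avoids both loci. The paper's proof is just the one-line version of this; your added remarks (the separate treatment of $g=0$ and the observation that the strict inequality $\deg L>2g$ is exactly what is needed) are correct and harmless elaborations.
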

\begin{proof}
 As before, letting $V=H^1(L^{-1})$ , $\text{dim}\,V = \text{deg}\, L + g - 1$ by
Riemann-Roch and $\text{deg}\, L + g - 1 > 3g -1$. On the other hand, $\text{dim}\, V_{bp}\leq 3g-1$ and $\text{dim}\,V_s\leq 3g-3$.
\end{proof}
\begin{rmk}
 Both bounds obtained for the line bundles in this section are the best possible in general.
 \begin{itemize}
  \item For example, if we allow $\emph{deg}\,L = 2g - 2$ in Theorem \ref{3g-3}, then we may take $L = K_C$ and since for any extension $E_{\eta}$, we have a surjection $H^1(E_{\eta}) \ra H^1(K_C) \neq 0$,
we see that all extensions are special, i.e. $V_s=V$.
\item Suppose we allow $\emph{deg}\,L = 2g$ in Theorem \ref{3g-1}, then a
general extension $E_{\eta}$ is non-special i.e. $H^1(E_{\eta})= 0$. From the sequence
$$ 0\ra\mcO_C\ra E_{\eta}\ra L\ra 0$$
we get the surjective map $H^0(L)\ra H^1(\mcO_C)$. Thus the dimension of the
image of $H^0(E_{\eta}) \ra H^1(L)$ is equal to $h^0(L)-g = 1$. If $g > 0$, one
section can not generate $L$ and thus $E_{\eta}$ has base points.
 \end{itemize}
\end{rmk}

Finally, we derive the corollaries we want.
\begin{coro}\label{deg d curve}
 Let $C \subset \mbb{P}^2$ be a smooth curve of degree $d$. Then we have an
extension 
$$0 \ra \mcO_C \ra E \ra \mcO_C(d) \ra 0$$ such that, $H^1(E) = 0$ and $E$ is
globally generated.
\end{coro}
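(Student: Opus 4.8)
The plan is to apply the last corollary of the preceding sequence of results (the one establishing that a general extension $0 \to \mcO_C \to E \to L \to 0$ with $\deg L > 2g$ has $H^1(E) = 0$ and $E$ globally generated) to the specific line bundle $L = \mcO_C(d)$ on a smooth plane curve $C \subset \mbb{P}^2$ of degree $d$. So the only real content is to check that the hypothesis $\deg L > 2g$ is satisfied here.

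First I would recall the numerical invariants of a smooth plane curve of degree $d$: by the adjunction formula, the genus is $g = \binom{d-1}{2} = \tfrac{(d-1)(d-2)}{2}$, and the degree of $\mcO_C(d)$, being the restriction of $\mcOP(d)$, is $d \cdot d = d^2$ (the intersection number of a degree-$d$ curve with a degree-$d$ curve in $\mbb{P}^2$). Then I would verify the inequality $d^2 > 2g = (d-1)(d-2) = d^2 - 3d + 2$, which simplifies to $3d - 2 > 0$, i.e. $3d > 2$; this holds for every $d \geq 1$. Hence the hypothesis of the corollary is met for all smooth plane curves, and there exists an extension of the required type with $H^1(E) = 0$ and $E$ globally generated.

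One small subtlety I would address: the corollary is phrased for an \emph{abstract} smooth projective curve $C$ of genus $g$ together with a line bundle of degree $> 2g$, so I just need to note that a smooth plane curve is in particular a smooth projective curve and that $\mcO_C(d)$ is an honest line bundle on it of the computed degree — there is nothing about the plane embedding that is actually used beyond extracting $g$ and $\deg L$. I expect essentially no obstacle here; the statement is a direct specialization, and the ``hard part'' — producing the extension with vanishing $H^1$ and global generation — has already been done in Theorem \ref{3g-1} and its corollary. The proof is therefore just the two-line genus-and-degree computation confirming $\deg \mcO_C(d) = d^2 > (d-1)(d-2) = 2g$.
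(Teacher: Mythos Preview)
Your proposal is correct and matches the paper's own proof essentially verbatim: the paper also just computes $\deg\mcO_C(d)=d^2$ and $2g=(d-1)(d-2)$, observes the former exceeds the latter, and invokes the preceding results (it cites Theorems~\ref{3g-3} and~\ref{3g-1} directly rather than their corollary, but that is no difference in content).
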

\begin{proof}
 By Theorem \ref{3g-3} and Theorem \ref{3g-1}, we just need to compute $\text{deg}\,\mcO_C(d)$:
 $$\text{deg}\,\mcO_C(d)=d^2>2\frac{(d-1)(d-2)}{2}=2g(c)\,.$$
\end{proof}
\begin{coro}
 For any $d > 0$, there exist four homogeneous polynomials
in three variables of degree $d$ so that the ideal generated by them contains
all degree $2d$ homogeneous polynomials. (This is an open condition, so the
conclusion is true for a general set of four degree d polynomials.)

In particular, for a general set of four polynomials $F_1,F_2,G_1,G_2$ the multiplication map 
$$A_d=\frac{\mbb{C}[x,y,z]}{(F_1,F_2,G_2)}_d\xra{\overline{G_1}} \frac{\mbb{C}[x,y,z]}{(F_1,F_2,G_2)}_{2d}=A_{2d}$$
is surjective.
\end{coro}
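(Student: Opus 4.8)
The plan is to deduce the statement from Corollary \ref{deg d curve}. Pick a general form $G_1\in S_d$, so that $C=(G_1=0)\subset\mbb{P}^2$ is a smooth plane curve of degree $d$, and let
$$0\ra\mcO_C\ra E\ra\mcO_C(d)\ra 0$$
be an extension as in Corollary \ref{deg d curve}, so that $H^1(E)=0$ and $E$ is globally generated. The idea is to read off three further degree-$d$ forms $F_1,F_2,G_2$ from a presentation of $E$, to check directly that the ideal $(F_1,F_2,G_2,G_1)\subset S$ contains $S_{2d}$, and then to obtain the remaining assertions from the fact that this is a Zariski-open condition on the four forms.

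Being globally generated of rank two on the curve $C$, the bundle $E$ is generated by three general global sections (a general section of $E$ is nowhere vanishing, and two further general sections generate the quotient line bundle $\det E\cong\mcO_C(d)$). This yields an exact sequence $0\ra\mcO_C(-d)\ra\mcO_C^{\oplus 3}\ra E\ra 0$, the kernel being $(\det E)^{-1}\cong\mcO_C(-d)$. Dualizing (the terms being locally free) gives an exact sequence
$$0\ra E^{\vee}\ra\mcO_C^{\oplus 3}\ra\mcO_C(d)\ra 0\,,$$
in which the map $\mcO_C^{\oplus 3}\ra\mcO_C(d)$ is given by three sections of $\mcO_C(d)$; using that $H^0(\mcOP(d))\ra H^0(\mcO_C(d))$ is surjective (with kernel $\mbb{C}\cdot G_1$), I lift these to three forms $F_1,F_2,G_2\in S_d$.

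Now twist this dual sequence by $\mcO_C(d)$, obtaining $0\ra E^{\vee}(d)\ra\mcO_C(d)^{\oplus 3}\ra\mcO_C(2d)\ra 0$, and pass to cohomology: the cokernel of $H^0(\mcO_C(d))^{\oplus 3}\ra H^0(\mcO_C(2d))$ embeds into $H^1(E^{\vee}(d))$. The crucial point is that, $E$ being a rank-two bundle with $\det E\cong\mcO_C(d)$, one has $E^{\vee}(d)\cong E$; hence $H^1(E^{\vee}(d))=H^1(E)=0$, and so $H^0(\mcO_C(d))^{\oplus 3}\ra H^0(\mcO_C(2d))$ is surjective. Unwinding this: given $h\in S_{2d}$, its image in $H^0(\mcO_C(2d))$ is a combination of the three chosen sections with coefficients in $H^0(\mcO_C(d))$; lifting those coefficients to $S_d$ and using that the kernel of $H^0(\mcOP(2d))\ra H^0(\mcO_C(2d))$ equals $G_1\cdot S_d$, we get $h\in(F_1,F_2,G_2,G_1)$. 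Thus $(F_1,F_2,G_2,G_1)_{2d}=S_{2d}$, which is the first assertion. The condition that the multiplication map $S_d^{\oplus 4}\ra S_{2d}$ attached to a quadruple of degree-$d$ forms have maximal rank is Zariski-open, so it holds for a general quadruple; and for a general quadruple $F_1,F_2,G_1,G_2$ the three forms $F_1,F_2,G_2$ are a regular sequence, so $A=S/(F_1,F_2,G_2)$ is Artinian, and the equality $S_{2d}=(F_1,F_2,G_2)_{2d}+G_1\,S_d$ says precisely that $A_{2d}=\overline{G_1}\cdot A_d$, i.e. that $m:A_d\ra A_{2d}$ is surjective.

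The step carrying the argument is isolating the right vanishing: recognizing that surjectivity of $H^0(\mcO_C(d))^{\oplus 3}\ra H^0(\mcO_C(2d))$ is governed by $H^1(E^{\vee}(d))$, and using the isomorphism $E^{\vee}(d)\cong E$ (valid for any rank-two bundle with that determinant) to bring Corollary \ref{deg d curve} to bear. The other ingredients — that a globally generated rank-two bundle on a curve is generated by three general sections, and the bookkeeping of transferring sections and coefficients between $C$ and $\mbb{P}^2$ — are routine, though the lifting has to be done with a little care.
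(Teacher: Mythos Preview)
Your proof is correct and follows essentially the same approach as the paper's: both start from Corollary~\ref{deg d curve}, pick three general sections of the globally generated rank-two bundle $E$ to obtain $0\to\mcO_C(-d)\to\mcO_C^{\oplus 3}\to E\to 0$, dualize and twist by $d$, and use the key identification $E^{\vee}(d)\cong E$ (valid for any rank-two bundle with determinant $\mcO_C(d)$) to deduce surjectivity of $H^0(\mcO_C(d))^{\oplus 3}\to H^0(\mcO_C(2d))$ from $H^1(E)=0$. Your write-up is slightly more explicit about the lifting of sections from $C$ to $\mbb{P}^2$ and about the Zariski-open nature of the condition, but the argument is the same.
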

\begin{proof}
 Let $C \subset \mbb{P}^2$ be a smooth curve of degree $d$ and let 
 $$0 \ra \mcO_C \ra E \ra \mcO_C(d) \ra 0$$
 be an extension such that $H^1(E) = 0$ and $E$ globally generated. Then we can pick three general sections which generate
$E$ to get a sequence, 
$$0 \ra \mcO_C(-d) \ra \mcO_C^3 \ra E \ra 0\,.$$
Dualize and twist by $d$ to get a sequence,
$$0 \ra E^{\vee}(d) \ra \mcO_C(d)^3 \ra \mcO _C(2d) \ra 0\,.$$
But $E^{\vee}(d) \simeq E$ and thus $H^1(E^{\vee}(d)) = 0$. This implies that the map $H^0(\mcO_C(d))^3 \ra H^0(\mcO_C(2d))$ is onto.

In particular, if the curve $C$ is given by the equation $(G_1=0)$, we get that the following map is onto
$$m_{F_1,F_2,G_2}:\left(\frac{\mbb{C}[x,y,z]}{(G_1)}\right)_d^3\xra{F_1,F_2,G_2} \left(\frac{\mbb{C}[x,y,z]}{(G_1)}\right)_{2d}\,.$$
This means that, given any degree $2d$ homogeneous polynomial $Q(x,y,z)$, there exist degree $d$ homogeneous polynomials $f,g,h\in \mbb{C}[x,y,z]$ such that
$$Q-fF_1-gF_2-hG_2\in (G_1)\,.$$
It is now easy to see the equivalence of  $m_{F_1,F_2,G_2}$ being surjective and the following map $m$ being surjective
$$m: \frac{\mbb{C}[x,y,z]}{(F_1,F_2,G_2)}_d\xra{\times\, \overline{G_1}} \frac{\mbb{C}[x,y,z]}{(F_1,F_2,G_2)}_{2d}\,.$$
\end{proof}
The above theorem shows that the morphism $m$ as defined in \eqref{multiplication map} is indeed surjective and thus $H\Big(\frac{A}{\overline{G_1}},2d\Big)=0$.

Thus the generic degree $2d$ curve $(F=0)$ in $\mbb{P}^2$ can be written in the form $F=F_1G_1+F_2G_2$ where $F_1,F_2,G_1,G_2\in S$ are of degree $d$. 

\begin{rmk}
We remark that \emph{every} degree $2d$ form $F\in S$ can be written in the form $F=F_1G_1+F_2G_2$ where $F_1,F_2,G_1,G_2\in S$ are of degree $d$. The proof of \cite[Theorem 5.1]{CCG} shows that $\emph{Sec}\,\mbb{X}=\mbb{P}(S_{2d})$ where $\mbb{X}$ is as defined in \eqref{reducible 2d hypersurfaces}. \cite[Remark 5.4]{CCG} explains that points of the variety of secant lines lie on a true secant line or on a tangent line to $\mbb{X}$. However, if $q$ is a point on the tangent line to $\mbb{X}$ at a point $p=[FG]\in \mbb{X}$, then $q$ can be written in the form 
$[\alpha FG'+\beta F'G]$ for some forms $F',G'$ of degree $d$ and scalars $\alpha,\beta$, and thus $q$ lies on the secant line joining $[F'G]$ and $[FG']$.  
\end{rmk}
\begin{lemma}
 Let $F\in S$ be smooth of degree $2d$. Then $F$ can be written in the form $F=F_1'G_1'+F_2'G_2'$ where $F_i'$ and $G_i'$ have degree $d$,  $F_1'$ is smooth and $F_2'$ and $G_2'$ intersect transversely at $d^2$ distinct points. 
\end{lemma}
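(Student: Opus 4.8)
The plan is to refine, in three steps, the decomposition $F=F_1G_1+F_2G_2$ with $F_i,G_i\in S_d$ furnished by the remark above: first make $F_1$ smooth; then arrange moreover that $V(F_1),V(F_2),V(G_2)$ have no common point; then, keeping $F_1$ fixed, slide $(F_2,G_2)$ through decompositions of $F$ until $V(F_2)$ and $V(G_2)$ meet transversely in $d^2$ points.

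\emph{Step 1.} I would argue by contradiction: assume that in every decomposition $V(F_1)$ is singular. As $F$ is smooth, hence irreducible, in any decomposition $F_1,F_2$ are linearly independent and coprime, so $\langle F_1,F_2\rangle$ is a pencil with finite, nonempty base locus $B=V(F_1)\cap V(F_2)$; since $(F_1,F_2)\mapsto(F_1,F_2)M$ together with $(G_1,G_2)^{\mathrm t}\mapsto M^{-1}(G_1,G_2)^{\mathrm t}$ (for $M\in GL_2(\mathbb C)$) takes decompositions to decompositions, every member of this pencil would be singular. By Bertini's theorem on pencils its general member is smooth off $B$, so ($\mathbb P^1$ being irreducible) some $p\in B$ lies in the singular locus of every member; then $dF_1|_p=dF_2|_p=0$, and together with $F_1(p)=F_2(p)=0$ this forces $F(p)=0$, $dF|_p=0$ — contradicting smoothness of $F$. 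Hence some decomposition has $F_1$ smooth.

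\emph{Step 2.} Given a decomposition with $F_1$ smooth, I would replace it by $F=(F_1+\mu F_2)G_1+F_2(G_2-\mu G_1)$, still valid, whose new $F_1$ is smooth for general $\mu$. Its triple of curves is $V(F_1+\mu F_2)\cap V(F_2)\cap V(G_2-\mu G_1)$, which equals $\{p\in B:(G_2-\mu G_1)(p)=0\}$ because $V(F_1+\mu F_2)\cap V(F_2)=B$. No $p\in B$ can satisfy $G_1(p)=G_2(p)=0$, since then all of $F_1,F_2,G_1,G_2$ vanish at $p$ and $F$ is singular there; so for each $p\in B$ there is at most one such $\mu$, and for $\mu$ avoiding a finite set the new decomposition has $F_1$ smooth and $V(F_1)\cap V(F_2)\cap V(G_2)=\varnothing$.

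\emph{Step 3.} Now fix such a decomposition, put $C_1=V(F_1)$, and for $(t,s)\in\mathbb C^2$ set $F_2^{(t)}=F_2+tF_1$, $G_2^{(s)}=G_2+sF_1$; then $F_2^{(t)}G_2^{(s)}\equiv F\pmod{F_1}$, so $F=F_1G_1^{(t,s)}+F_2^{(t)}G_2^{(s)}$ with $G_1^{(t,s)}=(F-F_2^{(t)}G_2^{(s)})/F_1\in S_d$. For $t\neq 0$ the curve $\Gamma_t:=V(F_2^{(t)})$ is a general member of $\langle F_1,F_2\rangle$, hence by Bertini reduced and smooth off $B$; on $\Gamma_t$ one has $F_2\equiv-tF_1$, so $G_2^{(s)}$ restricts to the member $G_2-(s/t)F_2$ of the pencil $\Lambda:=\langle F_2,G_2\rangle$. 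The base locus of $\Lambda$ on $\Gamma_t$ is $\Gamma_t\cap V(F_2)\cap V(G_2)=B\cap V(G_2)=\varnothing$ by Step 2, so $\Lambda|_{\Gamma_t}$ is base-point-free; it is a genuine pencil because $F_2$ and $G_2$ cut divisors on $\Gamma_t$ with disjoint supports, and its general member avoids $B\supseteq\mathrm{Sing}\,\Gamma_t$. By Bertini in characteristic zero the general member of $\Lambda|_{\Gamma_t}$ is a reduced divisor of degree $d^2$ at smooth points of $\Gamma_t$; since $H^0(\Lambda)\to H^0(\Lambda|_{\Gamma_t})$ is injective, hence bijective, for general $s$ the divisor cut on $\Gamma_t$ by $G_2^{(s)}$ is such a member, i.e. $V(F_2^{(t)})$ and $V(G_2^{(s)})$ meet transversely in $d^2$ distinct points. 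Taking $F_1'=F_1$, $F_2'=F_2^{(t)}$, $G_1'=G_1^{(t,s)}$, $G_2'=G_2^{(s)}$ for such $(t,s)$ then completes the proof. I expect Step 3 to be the main obstacle: the crucial point is that the emptiness secured in Step 2 is exactly what makes $\Lambda|_{\Gamma_t}$ a base-point-free honest pencil on the general member of $\langle F_1,F_2\rangle$, so that Bertini yields the transversality; Steps 1 and 2 are short once one notices that a common singular point of two of the curves, respectively a common zero of all four, would be a singular point of $F$.
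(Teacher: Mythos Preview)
Your proof is correct. Both your argument and the paper's use the same basic moves---replacing $(F_1,F_2)$ by members of the pencil $\langle F_1,F_2\rangle$ while adjusting $(G_1,G_2)$ accordingly, together with Bertini and the observation that a common singular/vanishing point of too many of the factors forces a singularity of $F$---but the organisation for the transversality step is genuinely different. The paper, after making $F_\alpha$ smooth, performs a \emph{second} smoothing to get $F_\beta=F_2+\beta F_\alpha$ smooth as well, and then restricts the pencil $\langle G_1,G_2\rangle$ to the smooth curve $V(F_\beta)$, invoking generic smoothness of the resulting map to $\mathbb P^1$ to obtain transversality of $V(F_\beta)$ with a general $V(G_\alpha)$. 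You instead keep $F_1$ fixed, insert an intermediate Step~2 forcing $V(F_1)\cap V(F_2)\cap V(G_2)=\varnothing$, and then in Step~3 vary $F_2,G_2$ by multiples of $F_1$; the point of Step~2 is precisely that it makes $\langle F_2,G_2\rangle|_{\Gamma_t}$ base-point-free on the (possibly singular) general member $\Gamma_t$ of $\langle F_1,F_2\rangle$, so Bertini applies there. Your route buys you that $F_2'$ never needs to be smooth and that the base-point issue is handled explicitly, at the cost of the extra Step~2; the paper's route is shorter but leans on making $F_\beta$ smooth and is less explicit about base points of the $\langle G_1,G_2\rangle$ pencil on $V(F_\beta)$.
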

\begin{proof}
By the previous remark, any $F\in S_{2d}$ can be written in the form 
$$F=F_1G_2+F_2G_2\text{ where } F_i,G_i\in S_d\,.$$
 Consider the linear system $\mbb{P}V$ associated to the vector space $V$ generated by $\{F_1,F_2\}$. Any element of $V$ has the form $aF_1+bF_2$ where $a,b\in \mbb{C}$. Hence, a general element of the linear system $\mbb{P}V$ looks like $F_1+\alpha F_2$ where $\alpha\in \mbb{C}$. 
 
 By Bertini's theorem, a general element of $\mbb{P}V$ is non-singular away from the basepoints of the linear system, i.e. the points $\mathfrak{b}=\{P\in\mbb{P}^2\,|\,F_1(P)=0=F_2(P)\}\,.$ 
 
Let $P\in \mathfrak{b}$. Then note that at least one of $F_1$ or $F_2$ is non-singular at $P$. Indeed, if both are singular at $P$ , then all the first partials $F_x,F_y,F_z$ of $F$ vanish at $P$. 
That is, $F$ itself is singular at $P$ which is a contradiction. 
Thereby, there is a dense open subset $U_1\subset\mbb{C}$ such that for $\alpha\in U_1$, $F_1+\alpha F_2$ is non-singular at all points $P\in\mfk{b}$. 
For $\alpha\in U_1$ we rewrite $F$ in the form
$$F=F_1G_1+F_2G_2=(F_1+\alpha F_2)G_1+F_2(G_2-\alpha G_1)={F}_{\alpha} G_1+F_2{G}_{\alpha}\,.$$

We now need to ensure the transversal intersection of the second component of the summation above. 

Consider the linear system generated by $\{{F}_{\alpha},F_2\}$. A general element of the linear system is smooth away from the basepoints of the linear system. By a similar argument as above $F_2+\beta ({F}_{\alpha})$ is a non-singular element of the linear system whenever $\beta\in U_2$ where $U_2\subset\mbb{C}$ is a dense open subset. Now we can rewrite $F$ as
$$F={F}_{\alpha}G_1+F_2{G}_{\alpha}
={F}_{\alpha}(G_1-\beta{G}_{\alpha}) +(F_2+\beta {F}_{\alpha}){G}_{\alpha}={F}_{\alpha}{G}_{\beta}+F_{\beta}G_{\alpha}\,,$$
where $F_{\alpha}$ and $F_{\beta}$ are smooth. 

The linear system defined by $\{G_1,G_2\}$ give the following rational map
$$\phi:\mbb{P}^2\dashrightarrow \mbb{P}^1$$
which is a morphism away from the basepoints of the linear system. The fiber whenever it makes sense is defined by $\{G_{\alpha}=0\}=\{G_2-\alpha G_1=0\}$.
Consider the restriction of $\phi$ to smooth curves $(F_{\beta}=0)$ where $\beta\in U_2$.
\begin{displaymath}
 \xymatrix{\mbb{P}^2\ar@^{..>}[r]^{\phi} & \mbb{P}^1 \\ 
            (F_{\beta}=0)\ar@^{..>}[ur]\ar@{^{(}->}[u] & }
\end{displaymath}
The fibers of this restriction precisely give the intersection of $F_{\beta}$ with $G_{\alpha}$. The differential of this restriction is non-zero since $(F_{\beta}=0)$ is smooth and hence, separated. Therefore, the tangent directions of $F_{\beta}$ and $(G_{\alpha})$ are independent for a general $\alpha$ and a general $\beta$  implying that $F_{\beta}$ and $G_{\alpha}$ intersect transversally.

\end{proof}

\section{Ulrich bundles on double covers of the projective plane}\label{Ulrich bundles over surfaces}
In this section, we use induction on the dimension of the variety to prove the existence of Ulrich bundles on smooth double covers. We start with the following general theorem.
\begin{thm}\label{induction theorem}
Consider $\pi:X\ra Y$ a double covering of smooth projective varieties of dimension $n>1$ defined over $\mbb{C}$. Let $\pi_*\mcO_X\simeq \mcO_Y \oplus L^{-1}$ where $L\in \emph{Pic}\, Y$ and 
$\pi$ is branched over a smooth hypersurface $B\in |L^{\otimes 2}|$. 
Suppose that
\begin{enumerate}
 \item $H^1(Y,\mcO_Y)=0$, and
 \item for a smooth and irreducible divisor $D\in |L|$ which is such that $\pi|_D:X_D:=X\times_{Y} D\ra D$ is a double covering,  
there is a vector bundle $E$ of rank $r$ on $X_D$ whose direct image in $D$ is trivial. 
\end{enumerate}
Then there is a vector bundle $E'$ on $X$ of rank $2r$ whose direct image in $Y$ is trivial.
\end{thm}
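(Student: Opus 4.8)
The plan is to produce $E'$ on $X$ by extending the rank-$r$ bundle $E$ on $X_D$ across the whole of $X$, and then to verify the triviality of $\pi_*E'$ by computing its cohomology twists on $Y$. First I would set up the relevant divisor: since $D \in |L|$, its preimage $X_D = \pi^{-1}(D)$ sits in $X$ as a divisor. I need to understand $\mathcal{O}_X(X_D)$; because $\pi^*L = \mathcal{O}_X(X_D)$ as the preimage of $D$, and because of the branching along $B \in |L^{\otimes 2}|$, there should be a natural square root of $\pi^*L^{\otimes 2}$ available — indeed the ramification divisor $R$ on $X$ satisfies $\mathcal{O}_X(R) = \pi^*L$ and $\pi(R) = B$. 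The key point is that $X_D$ and the ramification locus are comparable, and I would like to realize $E'$ as (roughly) a pushforward or an extension built from $E$ twisted appropriately. Concretely, I would consider the inclusion $j : X_D \hookrightarrow X$ and look for $E'$ fitting into a short exact sequence
$$0 \to E'(-X_D) \to E' \to j_* E \to 0$$
or rather build $E'$ as an extension
$$0 \to \pi^*(\text{something trivial-ish}) \to E' \to j_*(\text{twist of }E) \to 0;$$
the precise twist is dictated by forcing $\pi_* E'$ to be trivial of rank $2\cdot 2r = 4r$... wait, $\deg\pi = 2$ and $\operatorname{rk} E' = 2r$, so $\pi_*E'$ should be $\mathcal{O}_Y^{\oplus 4r}$. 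I would pin down the twist by a Riemann–Roch / Euler-characteristic bookkeeping on $Y$, using that $\pi_*E$ trivial on $D$ means $H^\bullet(D, E(t)) = H^\bullet(D, \mathcal{O}_D^{\oplus 2r}(t))$ for all $t$, i.e. $E$ has the cohomology of a trivial bundle on $D$ after pushing to $D$.

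Next I would carry out the cohomological verification. Pushing the defining sequence of $X_D$ in $X$ forward by $\pi$ (using $\pi_*\mathcal{O}_X = \mathcal{O}_Y \oplus L^{-1}$ and the projection formula) converts statements about $E'$ on $X$ into statements about two bundles on $Y$ — the $\mathcal{O}_Y$-part and the $L^{-1}$-part. Twisting by $\mathcal{O}_Y(t) = \mathcal{O}_{\mathbb{P}^?}$... here $Y$ need not be projective space, so "Ulrich" must be read in the sense of $\pi_* E' = \mathcal{O}_Y^{\oplus}$; there is no extra polarization floating around, so I just need $\pi_*E'$ to be globally free, equivalently $\pi_*E'$ has a trivial summand structure. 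The cleanest route: show $R^1\pi_* E' = 0$ and $\pi_* E'$ is locally free of the right rank with vanishing higher cohomology against... no — actually the claim is just that $\pi_*E'$ is the trivial bundle, which I would get by exhibiting it as an extension of $\pi_*$ of the two pieces, each of which I arrange to be trivial: $\pi_*(j_* E\text{-twist})$ reduces to $(\pi|_D)_* E$ tensored with cohomology data, which is trivial by hypothesis (2), and the kernel piece is arranged using hypothesis (1), $H^1(Y,\mathcal{O}_Y) = 0$, to kill the obstruction to the extension splitting off trivial factors. The role of (1) is precisely to ensure $\operatorname{Pic}$ and $H^1$ obstructions vanish so that the extension on $X$ descends to a genuine vector bundle (not just a coherent sheaf) with trivial pushforward.

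The main obstacle I anticipate is two-fold. First, \emph{getting the twist exactly right}: the extension has to be by the correct line-bundle twist of the trivial bundle so that the alternating sum of pushforwards is trivial with no leftover line-bundle terms — this is where the hypothesis $B \in |L^{\otimes 2}|$ and the precise form $\pi_*\mathcal{O}_X = \mathcal{O}_Y \oplus L^{-1}$ must be used in a balanced way, likely forcing the twist of $E$ to be by $\mathcal{O}_{X_D}$ of the restriction of the ramification divisor or of $\tfrac12 X_D$ in a suitable sense. Second, \emph{ensuring local freeness of $E'$}: an extension of $j_*E$ (a sheaf supported on a divisor) by a vector bundle on $X$ is a vector bundle if and only if the extension class is "everywhere nonzero" in the appropriate local sense — i.e. the class in $\operatorname{Ext}^1_X(j_*E(\ast), \mathcal{O}_X(\ast))$ restricts to a nowhere-vanishing element of $\mathcal{E}xt^1$, which by local duality along the divisor $X_D$ amounts to $E^\vee \otimes N_{X_D/X}$ being generated by the section coming from the extension class; here hypothesis (1) and a dimension count (as in the Extensions section above, Theorems \ref{3g-3} and \ref{3g-1}) guarantee a general extension class works. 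Once local freeness and the pushforward computation are both in hand, $\operatorname{rk} E' = 2r$ and $\pi_* E' = \mathcal{O}_Y^{\oplus 4r}$ follow, completing the proof.
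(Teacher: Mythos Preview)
Your overall shape is the paper's: build the bundle on $X$ as (the kernel of) an elementary transformation of a trivial bundle along the divisor $X_D$, then push forward and use $H^1(Y,\mathcal{O}_Y)=0$ to identify the result. But you are missing the one observation that makes this clean, and in its absence you are proposing to do something both harder and underspecified.

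The point you have not seen is that hypothesis (2) \emph{hands you} a specific surjection, not merely an abstract extension class to be chosen generically. Since $(\pi_D)_*E\simeq\mathcal{O}_D^{\oplus 2r}$, the adjunction map $(\pi_D)^*(\pi_D)_*E\to E$ is a surjection $\mathcal{O}_{X_D}^{\oplus 2r}\twoheadrightarrow E$; composing with $\mathcal{O}_X^{\oplus 2r}\twoheadrightarrow i_*\mathcal{O}_{X_D}^{\oplus 2r}$ gives a surjection $\mathcal{O}_X^{\oplus 2r}\to i_*E$ on $X$. Its kernel $G$ is automatically locally free of rank $2r$: this is the standard fact about elementary transformations of a vector bundle along a divisor, and it requires no ``nowhere-vanishing extension class'' argument. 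Your invocation of Theorems~\ref{3g-3} and~\ref{3g-1} here is misplaced; those concern a completely different problem (extensions of line bundles on curves) and play no role.

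The pushforward step is then a direct diagram chase rather than a cohomology-twist calculation. Push $0\to G\to\mathcal{O}_X^{\oplus 2r}\to i_*E\to 0$ forward by $\pi$: the middle term is $(\mathcal{O}_Y\oplus L^{-1})^{\oplus 2r}$ and the right term is $j_*(\pi_D)_*E=j_*\mathcal{O}_D^{\oplus 2r}$. Comparing with $0\to (L^{-1})^{\oplus 2r}\to\mathcal{O}_Y^{\oplus 2r}\to j_*\mathcal{O}_D^{\oplus 2r}\to 0$ shows $\pi_*G$ is an extension of $(L^{-1})^{\oplus 2r}$ by $(L^{-1})^{\oplus 2r}$, which splits since $\mathrm{Ext}^1\simeq H^1(Y,\mathcal{O}_Y)^{\oplus 4r^2}=0$. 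Hence $\pi_*G\simeq (L^{-1})^{\oplus 4r}$, and $E':=G\otimes\pi^*L$ has $\pi_*E'\simeq\mathcal{O}_Y^{\oplus 4r}$ by the projection formula. There is no mysterious ``twist to be determined'': the twist by $\pi^*L$ falls out of this computation.
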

\begin{proof}
We note that for a general divisor $D\in |L|$, the morphism $\pi_D:X_D\ra D$ is a double cover and
we have the following fibered diagram.
\begin{displaymath}
\xymatrix{X_D \ar@{^{(}->}[r]^i\ar[d]_{\pd} & X\ar[d]^{\pi} \\
 D \ar@{^{(}->}[r]_{j} & Y}
\end{displaymath}
By hypothesis, there is such a smooth and irreducible $D\in |L|$ and a vector bundle $E$ on $X_D$ such that $(\pi_D)_*E\simeq \mcO_D^{\oplus\, 2r}$. The finite morphism $\pi_D$ gives the surjection
 \begin{equation}\label{global generation}
  \mcO_{X_D}^{\oplus\, 2r}\simeq (\pi_D)^*(\pi_D)_*E\ra E\ra 0\,,
 \end{equation}
 which shows that $E$ is globally generated. Also, 
 $$h^0(X_D,E)=h^0(D,(\pd)_*E)=h^0(D,\mcO_D^{\oplus\, 2r})=2r\,.$$
Pushing forward the surjection \eqref{global generation} to $X$, consider the following composition
$$\mcO_X^{\oplus\, 2r}\twoheadrightarrow i_*\mcO_{X_D}^{\oplus\,2r}\ra i_*E\ra 0\,.$$
We get the following short exact sequence on $X$ where $G$ denotes the kernel of the above surjection:
$$0\ra G\ra \mcO_X^{\oplus\, 2r}\ra i_*E\ra 0\,.$$
Since $G$ is obtained by the process of elementary transformation of the trivial vector bundle on $X$ along a divisor $X_D\subset X$, the sheaf $G$ is locally free and has rank $2r$. 
We next push forward the above short exact sequence to $Y$:
$$0\ra \pi_*G\ra \pi_*\mcO_X^{\oplus 2r}\ra \pi_*i_*E\ra 0.$$
Since $\pi_*\mcO_X^{\oplus 2r}\simeq (\mcO_Y\oplus L^{-1} )^{\oplus\, 2r}$ and $\pi_*i_*E\simeq j_*\mcO_D^{\oplus 2r}$, we have the following commutative diagram.
\begin{displaymath}
 \xymatrix{
  & 0\ar[d] & 0\ar[d] & 0\ar[d]  & \\
 0\ar[r] & (L^{-1})^{\oplus\,2r}\ar[r]\ar@{^{(}->}[d] & \mcO_Y^{\oplus\,2r}\ar[r]\ar@{^{(}->}[d] & j_*\mcO_D^{\oplus\,2r}\ar[r]\ar[d]^{\simeq} & 0\\
0\ar[r] & \pi_*G\ar[r]\ar[d] & \mcO_Y^{\oplus\, 2r}\oplus (L^{-1})^{\oplus\, 2r}\ar[r]\ar[d] & j_*\mcO_D^{\oplus\,2r}\ar[d]\ar[r] & 0 \\
0 \ar[r] & (L^{-1})^{\oplus\,2r}\ar@{=}[r]\ar[d] & (L^{-1})^{\oplus\,2r}\ar[r]\ar[d] & 0 &\\
& 0 & 0 & & }
\end{displaymath}
Hence $\pi_*G$ is a vector bundle on $Y$ given by the extension:
$$0\ra (L^{-1})^{\oplus\,2r}\ra \pi_*G\ra (L^{-1})^{\oplus\,2r} \ra 0\,.$$
However, 
$\text{Ext}^1((L^{-1})^{\oplus\,2r},(L^{-1})^{\oplus\,2r})\simeq 
H^1(Y, \mcO_Y^{\oplus\,4r})=0\,$ by assumption. Hence, we get $\pi_*G\simeq (L^{-1})^{\oplus\,2r}\oplus (L^{-1})^{\oplus\,2r}$. Thereby, the required vector bundle is $G\otimes \pi^*L$, since by projection formula, 
$$\pi_*(G\otimes\pi^*L)\simeq \pi_*G\otimes L\simeq (L^{-1})^{\oplus\,4r}\otimes L\simeq \mcO_Y^{\oplus\, 4r}\,.$$
\end{proof}
The above theorem will be used to show that smooth double covers of $\mbb{P}^2$ carry rank two Ulrich bundles. To that end, we examine in the upcoming subsection, double covers $p:C\ra D$ of smooth curves and identify when the curve $C$ has line bundles $A$ whose direct image on $D$ is trivial.
\subsection{Line bundles on double covers of curves}

Let $p:C\ra D$ be a double cover of smooth curves and $p_*\mcO_C=\mcO_D\oplus L^{-1}$ for a line bundle $L$ on $D$ with $\text{degree}\,L=d$. Let the genus of $C$ and $D$ be $g$ and $h$ respectively. The Riemann-Hurwitz formula gives,
$$2g-2=2(2h-2)+2d\text{ i.e. }$$
$$d=g-2h+1 \text{ or equivalently }g=d+2h-1\,.$$
Also, we have an element $r\in H^0(L^2)$, unique upto non-zero constants such that $\text{div}\,r$ is the branch locus. Since $C$ is smooth, $\text{div}\,r$ is a sum of $2d$ distinct points.

\begin{lemma}\label{e=d}
 Let $p:C\ra D$ be a double cover of smooth curves as above. 
  Let $A$ be a degree $e$ line bundle on $C$
with $e \leq d$, which is basepoint free and not composed with $p$ (i.e. $A$ is not a pull back of a line bundle from $D$). Then $h^0(A) = 2$, $e = d$ and $p_*A = \mcO_D^2$.
\end{lemma}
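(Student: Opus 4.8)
The plan is to study the rank-two bundle $E:=p_*A$ on $D$ and force it to be trivial. I would begin with the numerics. Riemann--Hurwitz (already recorded above) gives $g=d+2h-1$, and comparing $\chi(C,A)=\chi(D,p_*A)$ through Riemann--Roch (using $g=d+2h-1$) yields $\deg E=e-d\le 0$. Moreover $h^0(E)=h^0(A)$, and $h^0(A)\ge 2$: base-point-freeness forces $h^0(A)\ge 1$, and if $h^0(A)=1$ the unique section would be nowhere vanishing, giving $A\simeq\mcO_C=p^*\mcO_D$, against the hypothesis that $A$ is not composed with $p$.

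The crucial structural input is a bound on sub-line-bundles of $E$. If $M\subseteq E$ is any line subbundle, the adjunction $\mathrm{Hom}_D(M,p_*A)\simeq\mathrm{Hom}_C(p^*M,A)$ turns the inclusion into a \emph{nonzero} morphism $\psi\colon p^*M\ra A$; being a nonzero morphism of line bundles on the integral curve $C$, it is injective, so $2\deg M=\deg p^*M\le\deg A=e$. Thus every line subbundle of $E$ has degree at most $e/2$, which bounds how unstable $E$ can be.

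Now run the dichotomy on semistability of $E$. If $E$ is \emph{not} semistable, let $M$ be a line subbundle of maximal degree; then $\deg M>\tfrac{e-d}{2}$, so the quotient line bundle $N:=E/M$ satisfies $\deg N<\tfrac{e-d}{2}\le 0$, hence $h^0(N)=0$ and $h^0(M)=h^0(E)=h^0(A)$. So every global section of $A$ comes from a section of $M$, and tracing the adjunction it equals $\psi\circ p^*(s)$ for some $s\in H^0(D,M)$; such a section vanishes along $\mathrm{div}\,\psi$, so base-point-freeness of $A$ forces $\mathrm{div}\,\psi=0$, i.e. $A\simeq p^*M$ --- contradicting the hypothesis. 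Hence $E$ is semistable; since $h^0(E)>0$ its slope $\tfrac{e-d}{2}$ must vanish, i.e. $e=d$ and $\deg E=0$. Finally, a semistable rank-two bundle of degree $0$ carrying a nonzero section contains $\mcO_D$ as a saturated subbundle, so $E$ sits in $0\ra\mcO_D\ra E\ra N'\ra 0$ with $\deg N'=0$; from $h^0(N')\ge h^0(E)-1\ge 1$ we get $N'\simeq\mcO_D$, and a nonsplit extension of $\mcO_D$ by $\mcO_D$ would have $h^0=1<h^0(A)$, so the extension splits and $E\simeq\mcO_D^{\oplus 2}$. Reading off, $h^0(A)=h^0(E)=2$, $e=d$, and $p_*A\simeq\mcO_D^2$, as claimed.

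The step I expect to be the genuine obstacle is the non-semistable case: it is the only place the hypothesis ``not composed with $p$'' is used, and it rests on the precise adjunction bookkeeping --- that the canonical identification $H^0(D,p_*A)=H^0(C,A)$ carries a section $s$ of a subbundle $M\subseteq p_*A$ to $\psi\circ p^*(s)\in H^0(C,A)$ --- which is exactly what lets base-point-freeness of $A$ annihilate $\mathrm{div}\,\psi$. The rest is routine degree and semistability bookkeeping for bundles on a curve.
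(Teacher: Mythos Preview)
Your argument is correct, but it takes a different route from the paper. The paper splits not on semistability of $E=p_*A$ but on the rank of the subsheaf of $E$ generated by global sections. If that rank is two, two general sections give an injection $\mcO_D^{\,2}\hookrightarrow E$; since $\deg E=e-d\le 0$, this must be an isomorphism, giving $e=d$, $p_*A\simeq\mcO_D^{\,2}$, and $h^0(A)=2$ in one stroke. If the rank is one, its saturation $M$ carries all of $H^0(E)=H^0(A)$, and exactly as in your non-semistable case the adjoint $p^*M\to A$ is surjective (since the sections of $A$ all factor through it and generate $A$), hence $A\simeq p^*M$, contradicting ``not composed with $p$''.

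Compared with this, your semistability dichotomy is logically sound but does more work: you need the extension analysis (the exact sequence $0\to\mcO_D\to E\to N'\to 0$ and the non-split $h^0$ count) to pin down $E\simeq\mcO_D^{\,2}$, whereas the paper's ``rank of the evaluation image'' split finishes the rank-two case immediately from the degree inequality. Note also that the degree bound $\deg M\le e/2$ you set up as the ``crucial structural input'' is never actually used in your argument---what you use is only the existence and injectivity of the adjoint map $\psi\colon p^*M\to A$, which the paper invokes as well. Both approaches share the same essential idea in the failure case (all sections trapped in a line subbundle forces $A$ to be a pullback), but the paper's organizing dichotomy is more economical here.
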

\begin{proof}
Since $A$ is basepoint free, $e \geq 0$. If $e = 0$, then $A = \mcO_C$ and thus trivially composed with $p$. So assume $e > 0$, again the basepoint free property of $A$ gives $h^0 ( A ) \geq 2$. From Riemann-Roch, we get $\chi(A)=e-g+1=e-(d+2h-1)+1=e-d+2-2h$. Thus, $\chi( p_*A ) = e-d+2-2h=e-d+2(1-h)$, which gives $\text{deg}\,  p_* A = e-d\leq 0$. If the subsheaf generated by global sections of $p_*A$ had rank two, taking two general sections, we have an inclusion $\mcO_D^2 \hra p_*A$ and since $\text{deg}\,p_*A\leq 0$ this degree must be zero and so $e = d$ and this map is an isomorphism, thus $p_*A = \mcO_D^2$ showing that $h^0( C, A ) = h^0 ( D, p_* A ) = 2$. 
 
 If the rank of the subsheaf generated by global sections is one, taking its saturation, we have a line subbundle $M\subset p_*A$ and $H^0(M)=H^0(p_*A)$. So, one has a map $p^*M\ra A$, which can not be zero, since sections of $A$ all come from $M$. But since $A$ is globally generated $p^*M = A$, contradicting our hypothesis that $A$ is not a pull back from $D$.
\end{proof}
The proof of the following lemma is well understood.
\begin{lemma}\label{Z 3 cases}
 Let $\psi:Z\ra D$ be a finite flat map of degree 2 (where $D$ is a smooth projective curve as before). Then one of the following holds.
 \begin{enumerate}
  \item $Z$ is reduced and irreducible.
  \item $Z$ is reduced but not irreducible. Then, $Z=Z_1\cup Z_2$ where $Z_i$ irreducible and the restriction maps $Z_i\ra D$ are isomorphisms.
  \item $Z$ is not reduced. Then the map $Z_{\emph{red}} \ra D$ is an isomorphism. 
 \end{enumerate}
\end{lemma}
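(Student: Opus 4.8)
The statement to prove is Lemma~\ref{Z 3 cases}: for a finite flat degree-two map $\psi:Z\ra D$ with $D$ a smooth projective curve, exactly one of three structural outcomes holds — $Z$ irreducible and reduced, $Z$ a disjoint union of two copies of $D$, or $Z$ non-reduced with $Z_{\text{red}}\cong D$.

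\textbf{Plan of proof.} The plan is to work locally on $D$ and reconstruct $Z$ from the sheaf of algebras $\psi_*\mcO_Z$. Since $\psi$ is finite flat of degree two and $D$ is a smooth curve, $\mcA:=\psi_*\mcO_Z$ is a locally free $\mcO_D$-module of rank two containing $\mcO_D$ as a subalgebra (via the unit). First I would split off the unit: because $\mcO_D\hra\mcA$ is a sub-bundle whose quotient $\mcA/\mcO_D$ is a line bundle (rank one, and torsion-free since $\mcO_D$ is saturated — any local section of $\mcA$ mapping into torsion would, after multiplication by a nonzero function, lie in $\mcO_D$, forcing it into $\mcO_D$ already as $\mcA$ is $\mcO_D$-flat hence torsion-free). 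So Zariski-locally on $D$ we may write $\mcA = \mcO_D\cdot 1 \oplus \mcO_D\cdot t$ for a local generator $t$, and the algebra structure is determined by a relation $t^2 = at + b$ with $a,b$ local functions on $D$. Completing the square when possible (char $0$), replace $t$ by $t - a/2$ to get $t^2 = c$ for a single local function $c$; the three cases will correspond to the vanishing behaviour of the (globally meaningful) discriminant section.

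\textbf{Key steps in order.} (1) Establish that $\mcA=\psi_*\mcO_Z$ is locally free of rank two and that $\mcO_D$ splits off as a sub-bundle, so $\mcA\cong\mcO_D\oplus N$ for a line bundle $N$ on $D$. (2) Encode the multiplication by the square map: restricting to an affine open $U=\Spec R$ with $N|_U$ free, $\mcA|_U = R[t]/(t^2-c)$ for some $c\in R$ (after completing the square). (3) Case analysis on $c$: if $c$ is not a square in the fraction field $K(D)$ — equivalently $t^2-c$ is irreducible over $K(D)$ — then $Z$ is integral, giving case (1). If $c$ is a nonzero square in $K(D)$, then since $D$ is a smooth (hence normal) curve and the ring extension is integral, $\mcA$ is contained in its normalization and the total ring of fractions of $\mcA$ is $K(D)\times K(D)$; reducedness of $Z$ plus flatness forces $\mcA$ itself to embed as a subring of $\mcO_D\times\mcO_D$ of full rank, and checking it is saturated gives $\mcA=\mcO_D\times\mcO_D$, i.e. $Z=Z_1\sqcup Z_2$ with each $Z_i\xra{\sim}D$ — case (2). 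If $c$ lies in every maximal ideal after passing to the reduced structure, i.e. $Z$ is non-reduced, then the nilradical of $\mcA$ is a nonzero ideal; the quotient $\mcA_{\text{red}}$ is a torsion-free rank-one (since a reduced finite extension of a DVR in a degree-two situation that is not a domain and not a product must drop rank) $\mcO_D$-algebra domain, finite over $\mcO_D$, hence equals $\mcO_D$ by normality — so $Z_{\text{red}}\cong D$ — case (3). (4) Note the cases are mutually exclusive by construction (irreducible vs. reducible, reduced vs. non-reduced), and verify the global gluing: the local descriptions patch because the discriminant / the square-class of $c$ is a well-defined global datum (a section of $N^{\otimes 2}$ or its zero locus), so the dichotomy does not depend on the chosen open.

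\textbf{Main obstacle.} The routine part is the local algebra; the genuine care is needed in step (3), case (2) and the gluing in step (4): one must check that "reduced but not irreducible" globally forces the two components to each map \emph{isomorphically} (not just birationally) onto $D$ — this is where normality of $D$ and finiteness of $\psi$ are used, since a finite birational morphism onto a normal variety is an isomorphism — and that these pieces are genuinely disjoint closed subschemes rather than meeting along a divisor, which follows because the idempotent splitting $\mcA_K = K\times K$ extends over all of $D$ once we know $\mcA$ is integrally closed in $\mcA_K$, itself forced by $D$ normal and $Z$ reduced. I would present this uniformly by saying: $\mcA\hra\widetilde{\mcA}$ into the integral closure in its total fraction ring, argue $\widetilde{\mcA}$ is $\mcO_D$ (case 1, integral), $\mcO_D\times\mcO_D$ (case 2), or $\mcO_D$ again on $Z_{\text{red}}$ (case 3), and then pin down $\mcA$ itself. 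This keeps the proof short, matching the paper's remark that it "is well understood."
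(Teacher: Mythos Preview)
The paper does not actually supply a proof of this lemma, remarking only that it ``is well understood.'' Your outline is the standard argument and would be accepted as such, with one exception.

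In case (2) you overclaim. You restate the conclusion as ``$Z$ a disjoint union of two copies of $D$'' and then argue that the idempotent splitting of the generic fibre $K(D)\times K(D)$ extends over all of $D$ because ``$\mathcal{A}$ is integrally closed in $\mathcal{A}_K$, itself forced by $D$ normal and $Z$ reduced.'' That implication is false: reducedness of $Z$ does not make $Z$ normal. Over a discrete valuation ring $(R,\pi)$ the algebra $R[t]/(t^2-\pi^2)$ is reduced, finite flat of rank two over $R$, and has two components $(t=\pi)$ and $(t=-\pi)$ meeting at the closed point; it sits as a proper subring of its normalization $R\times R$. Geometrically, two copies of $D$ glued at a point still give a finite flat degree-two cover of $D$. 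The lemma as stated only asserts $Z=Z_1\cup Z_2$ with each $Z_i\ra D$ an isomorphism, not that the $Z_i$ are disjoint, and this weaker statement is exactly what your correct sub-argument (each $Z_i\ra D$ is finite and birational, hence an isomorphism since $D$ is normal) already delivers. Drop the disjointness claim and the appeal to integral closedness of $\mathcal{A}$; the remainder of your plan then proves precisely what is required, and nothing more is needed for the application in Theorem~\ref{pencil}.
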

We now identify conditions for the existence of line bundles $A$ on $C$ as in the above lemma.
\begin{thm}\label{pencil}
 With notations as above, the following are equivalent.
 \begin{enumerate}
  \item There exists a line bundle $A$ of degree $d$ on $C$ which is basepoint free not composed with $p$.
  \item There exist elements $l,m,a\in H^0(L)$ such that the branch locus $\emph{div}\,r$ is given by $r=lm+a^2$.
 \end{enumerate}
\end{thm}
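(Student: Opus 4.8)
The plan is to exploit the double-cover structure directly. Let $\sigma:C\to C$ be the sheet-involution of $p$, so that $p_*\mcO_C=\mcO_D\oplus L^{-1}$, with the $\mcO_D$-summand the $\sigma$-invariants and $L^{-1}$ the $\sigma$-anti-invariants, and with the section $r\in H^0(L^2)$ cutting out the branch divisor. For any line bundle $A$ on $C$ of degree $d$ not composed with $p$, Lemma \ref{e=d} gives $p_*A\simeq\mcO_D^2$, so $h^0(A)=2$. The norm line bundle $\mathrm{Nm}(A):=A\otimes\sigma^*A$ descends to $D$, and a determinant computation on $p_*A$ shows $\mathrm{Nm}(A)=p^*L$ up to the standard twist (indeed $\det p_*A=L^{-1}\otimes\mathrm{Nm}(A)^{?}$ in the usual normalization, and matching degrees $2d=\deg L^2$ forces $\mathrm{Nm}(A)\cong L$). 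Hence $A\otimes\sigma^*A\simeq p^*L$.

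First I would run the implication $(1)\Rightarrow(2)$. Given such an $A$, pick a basis $s,t$ of $H^0(A)$. Then $s\otimes\sigma^*t,\ t\otimes\sigma^*s,\ s\otimes\sigma^*s,\ t\otimes\sigma^*t$ all lie in $H^0(C,A\otimes\sigma^*A)=H^0(C,p^*L)=H^0(D,L)$ (using the projection formula and $p_*\mcO_C\supset\mcO_D$; one must check these products land in the invariant part, which is automatic because $\sigma$ swaps the two tensor factors, up to the sign issues that I would track carefully). Set $a=s\otimes\sigma^*t+t\otimes\sigma^*s$ — a $\sigma$-symmetric combination — and relate $a^2$, $\ell:=s\otimes\sigma^*s$, $m:=t\otimes\sigma^*t$ to $r$. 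The key point: the map $C\to\mbb{P}^1$ given by $[s:t]$, composed with $p$, realizes $C$ inside a conic bundle over $D$, and the branch equation of $p$ in these coordinates becomes exactly a relation $r=\ell m-a^2$ (or $+a^2$ after rescaling); I would derive this by writing $C=\mathrm{Spec}_D(\mcO_D\oplus L^{-1})$ with multiplication $z^2=r$, and expressing $s,\sigma^*s$ etc. in terms of the generator $z$.

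For $(2)\Rightarrow(1)$: given $r=\ell m+a^2$ with $\ell,m,a\in H^0(L)$, I would construct $A$ as follows. On $D$, the data $(\ell,a)$ (or better the $2\times 2$ symmetric-type matrix $\left(\begin{smallmatrix}\ell & a\\ a & -m\end{smallmatrix}\right)$, whose determinant is $-(\ell m+a^2)=-r$) defines a rank-two bundle map $L^{-1}\oplus L^{-1}\to \mcO_D\oplus\mcO_D$ degenerating exactly along $\mathrm{div}\,r$; its "square root" over the double cover $C$ — concretely, the kernel or image sheaf of the pulled-back matrix $z\cdot I - M$ acting on $p^*(\mcO_D\oplus\mcO_D)$ — is a line bundle $A$ on $C$ with $p_*A\simeq\mcO_D^2$, hence degree $d$, basepoint-free (the two sections come from the two rows), and not composed with $p$ (since $p_*A$ is trivial rank two, not of the form $\mcO_D\oplus L^{-1}$). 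Verifying $A$ is a genuine line bundle — i.e. that the cokernel of $z-M$ is invertible — uses smoothness of $C$ and that $\mathrm{div}\,r$ consists of $2d$ distinct points, so the matrix $z-M$ drops rank by exactly one at each ramification point.

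The main obstacle I anticipate is bookkeeping the various sign/twist conventions: identifying $A\otimes\sigma^*A$ with $p^*L$ rather than $p^*L\otimes(\text{something})$, and getting the sign in $r=\ell m+a^2$ versus $r=\ell m-a^2$ to come out consistently (over $\mbb{C}$ one can absorb a sign by $a\mapsto\sqrt{-1}\,a$, so the two forms are equivalent, but this should be said). A secondary technical point is checking that the sheaf produced in $(2)\Rightarrow(1)$ is locally free and basepoint-free at the ramification points; this is where the hypothesis that the branch divisor is reduced (the $2d$ distinct points, guaranteed by smoothness of $C$) is essential, and it is exactly the kind of local computation that Lemma \ref{Z 3 cases} is set up to handle when one degenerates $\ell,m,a$.
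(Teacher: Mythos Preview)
Your route is genuinely different from the paper's. The paper argues geometrically: for $(1)\Rightarrow(2)$ it splits into the cases $A\not\cong\sigma^*A$ and $A\cong\sigma^*A$, builds a $\sigma$-equivariant morphism $C\to\mbb{P}^1\times\mbb{P}^1$ (resp.\ $C\to\mbb{P}^1$), passes to the quotient $D\to\mbb{P}^2$ (resp.\ $D\to\mbb{P}^1$), and identifies $r$ as the pullback of the branch conic of $\mbb{P}^1\times\mbb{P}^1\to\mbb{P}^2$; Lemma~\ref{Z 3 cases} is invoked \emph{here}, to show the relevant fibre product is reduced and irreducible so that $C$ is its normalisation. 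For $(2)\Rightarrow(1)$ the paper reverses this picture: $l,m,a$ define $D\to\mbb{P}^2$ with $q^*\mcO_{\mbb{P}^2}(1)=L$, and $A$ is pulled back from one ruling of $\mbb{P}^1\times\mbb{P}^1$. Your algebraic approach via the norm identity $A\otimes\sigma^*A\cong p^*L$ and a matrix factorisation is more uniform (no case split) and ties in nicely with the Clifford-module viewpoint underlying Theorem~\ref{quadric hypersurface}; the paper's approach has the advantage that the geometry makes the ``not composed with $p$'' condition visible at every step.

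Two points need repair. First, in $(2)\Rightarrow(1)$ your matrix $M=\left(\begin{smallmatrix}\ell & a\\ a & -m\end{smallmatrix}\right)$ has trace $\ell-m$, so $M^2\ne rI$ and $(zI-M)(zI+M)\ne 0$; you need the traceless $\left(\begin{smallmatrix}a & \ell\\ m & -a\end{smallmatrix}\right)$, which does satisfy $M^2=rI$. Second, in $(1)\Rightarrow(2)$ the step you have not isolated is that the anti-invariant element $b:=s\,\sigma^*t-t\,\sigma^*s$ lies in $H^0(p^*L)^{-}\cong H^0(D,\mcO_D)=\mbb{C}$ and is \emph{nonzero}: one computes $a^2-4\ell m=b^2=c^2 r$ for a scalar $c$, and only $c\ne 0$ lets you solve for $r$ in the required form. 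The nonvanishing $b\ne 0$ is exactly where ``$A$ not composed with $p$'' enters (if $b=0$ then $s/t=\sigma^*s/\sigma^*t$, so the morphism $[s:t]:C\to\mbb{P}^1$ is $\sigma$-invariant and factors through $D$, forcing $A$ to be a pullback). This, rather than the sign bookkeeping you flag, is the substantive point. Finally, Lemma~\ref{Z 3 cases} plays no role in your local line-bundle check at ramification points; what you actually need there is that $\ell,m,a$ cannot all vanish at a point of $\operatorname{div} r$, since otherwise $r$ would vanish to order at least two there.
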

\begin{proof}
  First we show that (1) implies (2). Let $\sigma$ be the involution on $C$ such that $C/\langle \sigma\rangle = D$. We will consider the two cases separately, when $A\neq\sigma^*A$ or $A = \sigma^*A$.
  
  Assume first that $A\neq \sigma^*A$. By Lemma \ref{e=d}, we have $h^0(A)=2$. Taking two sections of $A$ and taking their conjugate sections in $\sigma^*A$, we get a morphism 
$q:C\ra \mbb{P}^1\times \mbb{P}^1$. The involution acts on $\mbb{P}^1\times \mbb{P}^1$ by switching factors and then the morphism $q:C\ra \mbb{P}^1\times\mbb{P}^1$ is
equivariant for the action of $\sigma$. We get the following commutative diagram when we take quotient by $\sigma$.
\begin{displaymath}
 \xymatrix{ C\ar[r]^{p}\ar[d]_{q} & \frac{C}{\langle\sigma\rangle}=D\ar[d]^{q'}\\
            \mbb{P}^1\times \mbb{P}^1\ar[r]_{p'} & \frac{\mbb{P}^1\times \mbb{P}^1}{\langle\sigma\rangle}=\mbb{P}^2}
\end{displaymath}
Note that 
$$\text{deg}\,q^*p'^*\mcOP(1)=\text{deg}\,q^*\mcO_{\mbb{P}^1\times\mbb{P}^1}(1,1)=\text{deg}\,A\otimes\sigma^*A=2d\,.$$
Hence, $\text{deg}\,p^*q'^*\mcOP(1)=2d$, and since $p$ is a degree two map, we have $\text{deg}\,q'^*\mcOP(1)=d$.

Let us denote $B:=q'^*\mcOP(1)$. We claim that $B=L$. From the diagram, we see that $p^*B=A\otimes\sigma^*A$. Computing global sections,
\begin{eqnarray*}\label{nocall11}
H^0(C,A\otimes\sigma^*A) & = & H^0(C,p^*B)\ {}
\nonumber\\
& = & H^0(D,B\otimes p_*\mcO_C) {}
\nonumber\\
& = & H^0(D,B)\oplus H^0(D,B\otimes L^{-1})\,.
\end{eqnarray*}
If $B\neq L$, then $B\otimes L^{-1}$ is a non-trivial degree zero line bundle and so, $H^0(D,B\otimes L^{-1})=0$. That is, all sections of $A\otimes \sigma^*A$ are invariant under $\sigma$, which can happen only if the action of $\sigma$ on $\mbb{P}^1\times\mbb{P}^1$ is trivial. Hence, we have a contradiction and $L=B$.

Consider the fibered product $Z:=D\times_{\mbb{P}^2}(\mbb{P}^1\times\mbb{P}^1)$. We then have natural morphisms $\xi:Z\ra \mbb{P}^1\times\mbb{P}^1$, $\psi:Z\ra D$ and $\phi:C\ra Z$.
\begin{displaymath}
 \xymatrix{ C\ar[ddr]_q\ar[rrd]^{p} \ar@{-->}[dr]_{\phi} & & \\
            & Z\ar[r]_{\psi}\ar[d]^{\xi} & D\ar[d]^{q'}\\
            & \mbb{P}^1\times \mbb{P}^1\ar[r]_{p'} & \mbb{P}^2}
\end{displaymath}
Note that $Z$ is reduced and irreducible. Indeed, we prove this by contradiction by using Lemma \ref{Z 3 cases}. Suppose $Z$ is as in case (2) of the Lemma, i.e. $Z=Z_1\cup Z_2$. Then, since $C$ is irreducible, $\phi:C\ra Z$ must map to one of the $Z_i$s, say $Z_1$. Let $\xi':Z_1\ra \mbb{P}^1\times\mbb{P}^1$ be 
the restriction of $\xi$ to $Z_1$. So, we have $A=\phi^*\xi'^*\mcO_{\mbb{P}^1\times\mbb{P}^1}(1,0)$. But, 
$\psi:Z_1\ra D$ is an isomorphism implies $A$ is a pull back from $D$, which we have assumed is not the case. Thus, case (2) of Lemma \ref{Z 3 cases} is not possible. If we are in case (3) of the Lemma, again, the map $\phi:C\ra Z$ factors
through $Z_{\text{red}}$ and the same argument as above shows that this case is again not possible. 

Thereby, $Z$ is reduced and irreducible. Then, $\text{deg}\,\phi =1$ and $C$ is just the normalization of $Z$. The morphism $p'$ is branched along a smooth quadric  $Q\subset\mbb{P}^2$. Restricting to the complement, we see that
$\psi$ is etale on $D\setminus {q'}^{-1}(Q)$ and this open set of $Z$ is smooth. Thereby $\phi$ is an isomorphism on $\psi^{-1}(D\setminus {q'}^{-1}(Q))$, which inturn implies that $p$ is an etale double cover outside ${q'}^{-1}(Q)$. Since $\text{deg}\,q'^*\mcOP(1) = d$, ${q'}^{-1}(Q)$ has degree $2d$. Since $p$ is branched along a divisor of degree $2d$ and is contained in ${q'}^{-1}(Q)$, we see that ${q'}^{-1}(Q)$ must be precisely the branch locus of $p$. But the branch locus
is given by $r\in H^0(L^2)$. Clearly $r = lm + a^2$ , by pulling back
 the equation of the quadric $Q$ to $D$.
 
 Next we consider the case $A =\sigma^*A$. Take a general section $s$ of A. If $s$ and $\sigma^*s$ are linearly dependent, $\text{div}\,s$ is invariant
under $\sigma$. Since $s$ was general, we may assume this divisor is a sum of distinct points and none of them is in the ramification locus. But then clearly $\text{div}\,s = p^*p ( \text{div}\, s )$ and then $A = p^* (\mcO_D (p ( \text{div}\, s )))$ is
the pull back of a line bundle from $D$, contradicting our hypothesis. Thus, $s$ and $\sigma^*s$ are linearly independent, and since $h^0(A)=2$ by Lemma \ref{e=d}, we may assume that $s$ and $\sigma^*s$ generate $H^0(A)$. This
gives a map $q:C\ra \mbb{P}^1$ which equivariant for the action of $\sigma$, where $\sigma$ acts on $\mbb{P}^1$ by switching coordinates. As before, we have a commutative diagram.
\begin{displaymath}
 \xymatrix{C \ar[r]^{p} \ar[d]_q & \frac{C}{\langle \sigma\rangle}=D\ar[d]^{q'}\\
            \mbb{P}^1\ar[r]_{p'} & \frac{\mbb{P}^1}{\langle \sigma\rangle}=\mbb{P}^1 }
\end{displaymath}
It is immediate that $\text{deg}\, q' = d$. Let $Z=D\times_{\mbb{P}^1}\mbb{P}^1$, and we use the notation as earlier. 
\begin{displaymath}
 \xymatrix{ C\ar[ddr]_q\ar[rrd]^{p} \ar@{-->}[dr]_{\phi} & & \\
            & Z\ar[r]_{\psi}\ar[d]^{\xi} & D\ar[d]^{q'}\\
            & \mbb{P}^1\ar[r]_{p'} & \mbb{P}^1}
\end{displaymath}
Exactly as in the previous case, $Z$ is reduced irreducible and $\phi:C\ra Z$ has degree 1. Since $p'$ is branched along two points, a quadric $Q$ (given by product of two linear forms) as in the previous paragraph, one checks
that the map $\psi:Z\ra D$ is etale outside $q'^{-1}(Q)$ and so by degree
consideration, we see that $q'^{-1}(Q)$ is the branch locus of $p$ and hence $r=q'^{-1}(Q)$ is a section of $L^2$. 

As before, we check that $q'^*\mcO_{\mbb{P}^1}(1)=L$.
\begin{eqnarray*}\label{nocall11}
H^0(C,A) & = & H^0(\mbb{P}^1,q^*p'^*\mcO_{\mbb{P}^1}(1))\ {}
\nonumber\\
& = & H^0(p^*q'^*\mcO_{\mbb{P}^1}(1)) {}
\nonumber\\
& = & H^0(D,q'^*\mcO_{\mbb{P}^1}(1))\oplus H^0(D,q'^*\mcO_{\mbb{P}^1}(1)\otimes L^{-1})\,.
\end{eqnarray*}
If $q'^*\mcO_{\mbb{P}^1}(1)\neq L$, then $H^0(C,A)=H^0(D,q'^*\mcO_{\mbb{P}^1}(1))$ and all sections of $A$ are $\sigma$ invariant which is not possible. Hence, $q'^*\mcO_{\mbb{P}^1}(1)= L$. Thus, $r\in H^0(L^2)$ which is obtained by pulling back $Q$ is given by an equation $r=lm$ for two sections $l,m\in H^0(L)$.

We now prove the converse (2) implies (1). So assume $r =lm+a^2$ for $l,m,a\in H^0(L)$. Since $\text{div}\,r$
is smooth, we see that $l, m, a$ generate $L$, because if they all vanished at a point, then $r$ would vanish doubly at that point. Thus, the sections $l,m,a$ give
a morphism $q:D \ra \mbb{P}^2$, with $q^*\mcOP(1)= L$. By abuse of notation, we think of $l,m,a$ as sections from $ H^0(\mbb{P}^2,\mcOP(1))$ and $r$ as a quadric in $\mbb{P}^2$. 

The dimension of the subspace generated by $l,m,a\in H^0(\mbb{P}^2,\mcOP(1))$ can not be zero and can not be one,
since they generate $L$, a positive degree line bundle. Hence $h^0(\mbb{P}^2,\mcOP(1))=2 \text{ or } 3$. If it is three,
then $lm+a^2$ is a non-singular quadric and if it is two, then we may assume (by abuse of notation) it is $r=lm$ (any homogeneous polynomial in two variables is product of linear factors). Also, since $\text{div}\, r$ on $D$ is smooth, we see
that in the latter case, the point of intersection of $l, m$ is not in the image of $D$.

We first look at the case when $l, m, a$ are linearly independent and so $r = lm + a^2$ is a smooth quadric on $\mbb{P}^2$. We have a fiber product diagram, with $p':\mbb{P}^1\times\mbb{P}^1 =Z\ra\mbb{P}^2$, the double cover branched along $r = 0$.
\begin{displaymath}
 \xymatrix{ C \ar[r]^{p}\ar[d]_{q'} & D\ar[d]^q \\
            Z \ar[r]_{p'}  & \mbb{P}^2}
\end{displaymath}
The two projections $C\ra \mbb{P}^1$ give two globally generated line bundles $A$ and $B$ on $C$. Since $A,B$ are globally generated their degrees are non-negative. In fact the degree is not zero. Indeed, if $\text{deg}\,A=0$, then the projection
corresponding to $A$ is constant and thus the map $q'$ factors through $\{t\}\times\mbb{P}^1$ for some $t\in\mbb{P}^1$. But then $p'\circ q'(C)$ is contained in a line
in $\mbb{P}^2$ . This means $q ( D )$ is contained in a line, which contradicts our assumption that $l, m, a$ are linearly independent.

So $\text{deg}\,A>0$ and $\text{deg}\,B>0$. Also,
$$A\otimes B=q'^*\mcO_{\mbb{P}^1\times\mbb{P}^1}(1,1)=q'^*p'^*\mcOP(1)=p^*q^*\mcOP(1)\,.$$
This says $\text{deg}\,A\otimes B= 2d$ and $A\otimes B$ is a pull back from $D$. Since both
$A, B$ are of positive degree, we see that at least one of them must have degree at most $d$.

We claim that neither $A$ nor $B$ is a pull back from $D$. If one of them is, then both are, since $A\otimes B$ itself is a pullback from $D$. Let $A=p^*A'$ and $B=p^*B'$. We have $\text{deg}\, A'$ , $\text{deg}\, B'$ are both less than $d$. Then
\begin{eqnarray*}\label{nocall11}
H^0(C,A) & = & H^0(D,p_*A)\ {}
\nonumber\\
& = & H^0(D,p_*p^*A') {}
\nonumber\\
& = & H^0(D,A')\oplus H^0(D,A'\otimes L^{-1})\,.
\end{eqnarray*}
Since $\text{deg}\,( A'\otimes L^{-1}) < 0$, the last term above is just $H^0 ( D, A' )$ . So, all sections of $A$ descend to $A'$ and then these
sections generate $A'$ , since they generate $A$. Applying this to $B, B'$ ,
we get a morphism $\phi:D\ra \mbb{P}^1\times\mbb{P}^1$ such that $\phi\circ p=q'$. By universal property of fiber product, we get a morphism $D\ra C$ which is absurd. Hence, neither $A$ nor $B$ is a pull back from $D$.

If $\text{deg}\,A\leq d$, applying Lemma \ref{e=d}, we see that $\text{deg}\, A = d$ which proves what we need.

Next assume $r=lm$. The proof is similar in this case as well. We have seen that $l, m$
will generate $L$ and thus we get a morphism $q:D\ra\mbb{P}^1$ with $q^*\mcO_{\mbb{P}^1}(1)=L$ and $l=0$ defines 0 and $m=0$ defines $\infty$. Take
the double cover $\mbb{P}^1\ra\mbb{P}^1$ simply ramified at zero and $\infty$. Then, the
pull back is just $C$ and we have a morphism $C\ra\mbb{P}^1$, resulting in the following Cartesian diagram.
\begin{displaymath}
 \xymatrix{ C\ar[r]^{p} \ar[d]_{q'} & D\ar[d]^q \\
            \mbb{P}^1\ar[r]_{\pi'} & \mbb{P}^1}
\end{displaymath}
Again this gives a line bundle $A=q'^*\mcO_{\mbb{P}^1}(1)$ of the desired type. We just
need to check is that $A$ is not a pull back from $D$. Clearly $A \neq \mcO_C$ and as before, if $A= p^*A'$ then $H^0(A) = H^0(A')$ . Thus, we get a morphism $\phi:D\ra \mbb{P}^1$ such that $\phi\circ p = q'$.  Then we get a morphism $D\ra C$ by universal property of fiber products, which is absurd.
\end{proof}

\subsection{Rank two Ulrich bundles on double covers of the plane}
We are now ready to give a proof of Theorem \ref{UB on the plane}.
\begin{proof}[Proof of Theorem \ref{UB on the plane}]
 Let $\pi:X\ra\mbb{P}^2$ be a smooth double cover and $\text{deg}\,B=2d$. Then by the results in $\mathcal{x}\,$ \ref{SSP section} we 
 see that the branch curve $B=(F=0)$ can be written as 
 $F=F_1G_1+F_2G_2$ where $F_i,G_i$ are homogeneous polynomials of degree $d$ and $F_1$ is smooth, and $F_2$ and $G_2$ intersect transversely at $d^2$ points.
 
 Now let $D=(F_1=0)$. Then the branch locus $B$ when restricted to $D$ is given by $r=(F_2G_2=0)$ which is in the form $r=lm+a^2$ as required by Theorem \ref{pencil}. Let $C=X\times_{\mbb{P}^2} D$. Then
  $C$ is a a double cover of $D$ branched along $\text{div}\,r$ and we have a morphism $p:C\ra D$. By Theorem \ref{pencil}, there is a basepoint free line bundle $A$ on $C$ of degree $d$ not composed with $p$. Lemma \ref{e=d} tells us that $p_*A=\mcO_D^2$.
  
 Thus by Theorem \ref{induction theorem}, $X$ admits
 a rank two Ulrich bundle.
\end{proof}
 Theorem \ref{UB on the plane} settles the questions of
 existence  and minimal rank of Ulrich bundles on all smooth double covers of $\mbb{P}^2$.
\section{Ulrich bundles on double covers of $\mbb{P}^n$ when $n\geq 3$}\label{higher double covers}
In this section we show that any smooth double cover of $\mbb{P}^n$ admits an Ulrich bundle.
The proof depends on the result of Herzog, Ulrich and Backelin \cite{HUB}  that any complete intersection variety 
admits an Ulrich sheaf. We reproduce their proof in the special case of quadric hypersurfaces. We begin with the following
\begin{lemma}\label{quadric matrix}
 Let $X$ be an irreducible quadric hypersurface defined by $(Q=0)$ in $\mbb{P}^n$ where $n\geq 3$ and
 $Q\in \mbb{C}[x_0,x_1,\ldots,x_n]$. If there is a $2^s\times 2^s$ matrix $A$ on $\mbb{P}^n$
  with linear entries such that $A^2=Q\cdot \emph{Id}$, 
  then $X$ admits an Ulrich sheaf of rank $2^{s-1}$.
  
\end{lemma}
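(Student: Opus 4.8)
The plan is to read the hypothesis $A^2=Q\cdot\mathrm{Id}$ as a (single–term) matrix factorization of $Q$ over the polynomial ring $\mbb{C}[x_0,\dots,x_n]$, and to produce the Ulrich sheaf as the cokernel of $A$ viewed as a morphism of vector bundles on $\mbb{P}^n$. The point is that this cokernel is automatically supported on $X$, carries an $\mcO_{\mbb{P}^n}$–resolution of length $1=\mathrm{codim}_{\mbb{P}^n}X$ whose only map is $A$ (linear entries), and is therefore Ulrich.

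First I would record that $Q$ is irreducible in $\mbb{C}[x_0,\dots,x_n]$ (this is the meaning of ``irreducible quadric''), and compute $(\det A)^2=\det(A^2)=\det(Q\cdot\mathrm{Id})=Q^{2^s}$. In the UFD $\mbb{C}[x_0,\dots,x_n]$ this forces $\det A=\pm\,Q^{2^{s-1}}$; in particular $2^s$ is even, so $s\geq 1$, and $\det A\neq 0$. Viewing $A$ as a morphism $A\colon\mcO_{\mbb{P}^n}(-1)^{\oplus 2^s}\to\mcO_{\mbb{P}^n}^{\oplus 2^s}$, the nonvanishing of $\det A$ makes $A$ injective as a map of sheaves, with cokernel $\mathcal{F}$ scheme-theoretically supported on $(\det A=0)=X$. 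Next, the identity $A\cdot A=Q\cdot\mathrm{Id}$ shows that multiplication by $Q$ on $\mcO_{\mbb{P}^n}^{\oplus 2^s}$ factors through $A$, so $Q$ annihilates $\mathcal{F}$; hence $\mathcal{F}=i_*E$ for a coherent sheaf $E$ on $X$, where $i\colon X\hookrightarrow\mbb{P}^n$, and we obtain an exact sequence on $\mbb{P}^n$
\[
0\lr\mcO_{\mbb{P}^n}(-1)^{\oplus 2^s}\xrightarrow{\ A\ }\mcO_{\mbb{P}^n}^{\oplus 2^s}\lr i_*E\lr 0 .
\]
Twisting by $\mcO_{\mbb{P}^n}(-j)$ for $1\leq j\leq n-1=\dim X$ and taking cohomology on $\mbb{P}^n$, both outer terms $H^\bullet(\mbb{P}^n,\mcO_{\mbb{P}^n}(-j))$ and $H^\bullet(\mbb{P}^n,\mcO_{\mbb{P}^n}(-j-1))$ vanish in every degree, since $-n\leq -j-1< -j\leq -1$. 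Therefore $H^i(X,E(-j))=H^i(\mbb{P}^n,i_*E(-j))=0$ for all $i$ and all $1\leq j\leq\dim X$, which is exactly the cohomological criterion of Definition~\ref{ulrich} for $E$ to be Ulrich with respect to $\mcO_X(1)=\mcO_{\mbb{P}^n}(1)|_X$.

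It remains to compute the rank. Since $X$ is integral, $\mathrm{rank}\,E$ equals the length of $\mathcal{F}$ at the generic point $\eta$ of $X$. Over the discrete valuation ring $\mcO_{\mbb{P}^n,\eta}$, in which $Q$ generates the maximal ideal, the cokernel of the injective square matrix $A$ has length $v_\eta(\det A)=v_\eta\big(Q^{2^{s-1}}\big)=2^{s-1}$; as this stalk is killed by $Q$, its length over $\mcO_{\mbb{P}^n,\eta}$ coincides with its dimension over $\mbb{C}(X)$, so $\mathrm{rank}\,E=2^{s-1}$. The one genuinely substantive step is the first: recognizing that $A^2=Q\cdot\mathrm{Id}$ forces $\mathrm{coker}\,A$ to descend to $X$ and to acquire a linear $\mcO_{\mbb{P}^n}$–resolution of length $\mathrm{codim}_{\mbb{P}^n}X$; after that, the Ulrich vanishing is just the acyclicity of line bundles on $\mbb{P}^n$ in the range $-n\leq k\leq -1$, and the rank is a one-line computation over a DVR. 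A minor point to flag is that $E$ need not be locally free when $X$ is singular, which is why the conclusion asserts only an Ulrich \emph{sheaf}.
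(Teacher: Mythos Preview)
Your proof is correct and follows a genuinely different route from the paper's. Both arguments begin with the same short exact sequence
\[
0\lr \mcO_{\mbb{P}^n}(-1)^{\oplus 2^s}\xra{A}\mcO_{\mbb{P}^n}^{\oplus 2^s}\lr G\lr 0,
\]
but diverge from there. The paper applies the snake lemma to the pair of maps $A$ and $A^2$ to obtain an exact sequence $0\to G(-1)\to \mcO_X^{\oplus 2^s}\to G\to 0$ on $X$ (which immediately gives $\mathrm{rank}\,G=2^{s-1}$), then chooses a linear projection $\pi:X\to\mbb{P}^{n-1}$, pushes forward, invokes Horrocks' criterion to see that $\pi_*G$ is a sum of line bundles, and finally uses the splitting $\pi_*G\oplus\pi_*G(-1)\simeq \mcO_{\mbb{P}^{n-1}}^{2^s}\oplus\mcO_{\mbb{P}^{n-1}}(-1)^{2^s}$ to pin down $\pi_*G=\mcO_{\mbb{P}^{n-1}}^{2^s}$. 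You instead read the Ulrich property directly off the linear resolution on $\mbb{P}^n$ via the cohomological criterion, and compute the rank from $\det A=\pm Q^{2^{s-1}}$ over the DVR at the generic point of $X$. Your route is shorter and avoids Horrocks entirely; the paper's route has the advantage of exhibiting the trivial pushforward explicitly (the primary definition of Ulrich here) and of producing the auxiliary exact sequence on $X$. Two small cosmetic points: the equality ``$(\det A=0)=X$'' is only set-theoretic for $s\ge 2$ (the scheme is a thickening of $X$), though your subsequent argument that $Q$ annihilates the cokernel is what actually matters and is correct; and Definition~\ref{ulrich} in the paper states the cohomological criterion only for vector bundles, so strictly speaking you are using the standard (Beilinson/linear-resolution) extension of that equivalence to sheaves, which is well known but not recorded in the paper.
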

\begin{proof}
 Let $F=\mcPn^{2^s}$. Then we have the following short exact sequence
 on $\mbb{P}^n$
 \begin{equation}\label{basicex}
   0\ra F(-1)\xra{A} F\ra G\ra 0\,.
 \end{equation}
 Note that multiplication by $A$ is injective, since multiplication by $A^2$ is. Then consider the following diagram:
\begin{displaymath}
 \xymatrix{
  &  &  & \text{`Kernel'}\ar[d]  & \\
0\ar[r] & F(-2)\ar[r]^{A^2}\ar[d] & F \ar[r]\ar@{=}[d] & F|_X \ar[d]\ar[r] & 0 \\
0 \ar[r] & F(-1)\ar[r]^{A} \ar[d] & F\ar[r] & G\ar[r] & 0 \\
& G(-1) &  & & }
\end{displaymath}
By snake lemma, $\text{`Kernel'} \simeq G(-1)$. Hence we have
the following short exact sequence on $X$
$$0\ra G(-1)\ra F|_X\ra G\ra 0\,.$$
Since $F$ has rank $2^s$ on $X$, the sheaf $G$ on $X$ has rank
$2^{s-1}$. We now show that $G$ is Ulrich on $X$. By projection, we
have a degree two morphism
$$\pi:X\ra \mbb{P}^{n-1}$$
So, on $\mbb{P}^{n-1}$ we have a short exact sequence:
\begin{equation}\label{2exseq}
  0 \ra \pi_* G(-1) \ra \pi_* \mcO_X^{2^s} \ra \pi_* G \ra 0\ .
\end{equation}
Notice that since $X$ is a quadric,
$$\pi_* \mcO_X = \mcO_{\mbbP^{n-1}} \oplus \mcO_{\mbbP^{n-1}}(-1)\,.$$
Next, using the exact sequence \eqref{basicex}, we see that
$H^i(G(\star)) = 0$ for $0 < i < \text{dim} X = n-1$. This says $H^i(\pi_* G(\star)) = 0$ for $0 < i < n-1$, and so
by Horrocks' criterion (cf. \cite{Hor} or \cite[Theorem 2.3.1]{OK}), $\pi_* G$ is a direct sum of line
bundles. Further, since $n - 1 \geq 2$, we have $\text{Ext}^1(\pi_*G,\pi_*G(-1))=0$, 
and thereby the exact sequence \eqref{2exseq}
splits:
\begin{equation}
  \pi_*G \oplus \pi_* G(-1) \simeq \mcO^{2^s}_{\mbbP^{n-1}} \oplus \mcO_{\mbbP^{n-1}}(-1)^{2^s}\,.\nonumber
\end{equation}
Hence $\pi_*G = \mcO_{\mbbP^{n-1}}^\alpha \oplus \mcO_{\mbbP^{n-1}}(-1)^\beta$
for some $\alpha$ and $\beta$. 

But, by projection formula,
$$\pi_*G(-1)=\pi_*(G\otimes\mcPn(-1)|_X)\simeq \pi_*(G\otimes\pi^*\mcO_{\mbb{P}^{n-1}}(-1))\simeq \pi_*G\otimes \mcO_{\mbb{P}^{n-1}}(-1)\,.$$
Thereby the exponent $\beta$ must be zero in the expression of $\pi_* G$  since
otherwise $\pi_* G(-1)$ will have $\mcO_{\mbbP^{n-1}}(-2)$ as a summand
and cannot be a
direct summand of $\mcO^{2^s}_{\mbbP^{n-1}} \oplus \mcO_{\mbbP^{n-1}}(-1)^{2^s}$.

So, $\pi_*G = \mcO^{2^s}_{\mbbP^{n-1}}$ showing $G$ is Ulrich of rank
$2^{s-1}$.
\end{proof}
\begin{thm}\label{quadric hypersurface}
  Let $X$ be an irreducible quadric hypersurface defined by $(Q=0)$ in $\mbb{P}^n$
  where $n\geq 3$ and $Q\in \mbb{C}[x_0,x_1,\ldots,x_n]$. Then $X$
  admits an Ulrich sheaf.
\end{thm}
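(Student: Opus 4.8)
The plan is to deduce the theorem directly from Lemma \ref{quadric matrix}: given the hypotheses ($n\geq 3$, $X$ irreducible), it suffices to produce, possibly after a linear change of homogeneous coordinates, a square matrix $A$ of linear forms on $\mbb{P}^n$ of size a power of two with $A^2=Q\cdot\mathrm{Id}$. Such a matrix is precisely a representation of the (generalized) Clifford algebra of the quadratic form $Q$, and its existence over $\mbb{C}$ is classical; this is exactly the mechanism underlying the theorem of Herzog, Ulrich and Backelin \cite{HUB} in the quadric case.

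First I would put $Q$ in normal form. Over $\mbb{C}$ there is an invertible linear substitution $x_i\mapsto z_i$ after which $Q=z_0^2+z_1^2+\cdots+z_r^2$, where $r+1$ is the rank of $Q$; since $X$ is irreducible, $r\geq 2$. This substitution is induced by an automorphism $\phi$ of $\mbb{P}^n$ with $\phi^*\mcPn(1)\simeq\mcPn(1)$, so it carries $X$ to an isomorphic quadric and an Ulrich sheaf on the latter pulls back to an Ulrich sheaf on $X$; hence we may assume $Q$ is already diagonal. Note also that each $z_i$ is a linear form in the $x_j$, so a matrix whose entries are linear in the $z_i$ has entries linear in the $x_j$.

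Next I would build the Clifford generators. Fix $s=\lceil r/2\rceil$ and, starting from the $2\times 2$ Pauli matrices $\sigma_1,\sigma_2,\sigma_3$, which satisfy $\sigma_a\sigma_b+\sigma_b\sigma_a=2\delta_{ab}\,\mathrm{Id}$, form $2^s\times 2^s$ matrices $e_0,e_1,\ldots,e_r$ by the standard $s$-fold tensor pattern: for $0\leq k\leq s-1$ place $\sigma_1$ (resp.\ $\sigma_2$) in the $(k+1)$-th tensor slot, $\sigma_3$ in the first $k$ slots and $\mathrm{Id}$ in the remaining slots to obtain $e_{2k}$ (resp.\ $e_{2k+1}$), together with $\sigma_3^{\otimes s}$ if one further generator is needed. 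A direct check using $\sigma_a\sigma_b+\sigma_b\sigma_a=2\delta_{ab}\,\mathrm{Id}$ and the multiplicativity of tensor products shows that these $r+1$ matrices pairwise anticommute and satisfy $e_i^2=\mathrm{Id}$. Setting $A=\sum_{i=0}^{r} z_i e_i$, a $2^s\times 2^s$ matrix of linear forms on $\mbb{P}^n$, we get
\[
A^2=\sum_{i}z_i^2\,e_i^2+\sum_{i<j}z_iz_j\,(e_ie_j+e_je_i)=\Big(\sum_i z_i^2\Big)\mathrm{Id}=Q\cdot\mathrm{Id}.
\]
Lemma \ref{quadric matrix} then produces an Ulrich sheaf on $X$, of rank $2^{s-1}$ (for $r$ small this recovers the familiar examples: $r=2$ gives an Ulrich line bundle on a conic, and $r=n$ the spinor bundles on a smooth quadric).

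The only point requiring care is the purely combinatorial bookkeeping: arranging for the required number of pairwise anticommuting matrices squaring to $\mathrm{Id}$ to occur in a size that is an exact power of two, and verifying the anticommutation relations for the tensor products. There is no genuine geometric obstacle here; the whole content is the classical existence of a Clifford representation of a complex quadratic form, which is precisely what makes Lemma \ref{quadric matrix} applicable.
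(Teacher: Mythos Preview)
Your argument is correct and follows essentially the same approach as the paper: normalize $Q$, build a matrix $A$ of linear forms with $A^2=Q\cdot\mathrm{Id}$ via a Clifford-algebra representation, and invoke Lemma~\ref{quadric matrix}. The only cosmetic difference is that the paper writes $Q$ in the isotropic form $Q=\sum_{i=1}^{s} l_i m_i$ (with $l_s=m_s$ when the rank is odd) and constructs $A$ inductively by the block recipe $A=\begin{pmatrix}A_1 & l\,\mathrm{Id}\\ m\,\mathrm{Id} & -A_1\end{pmatrix}$, whereas you diagonalize $Q=\sum z_i^2$ and realise the same Clifford generators as tensor products of Pauli matrices; these are two standard presentations of the same representation, so the content is identical.
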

\begin{proof}
  Note that there exists a linear subspace $\mbb{P}^{n_0}\subset\mbb{P}^n$ and a 
  smooth quadric in $\mbb{P}^{n_0}$ such that $X$ is a cone over this smooth
  quadric. In particular, $Q$ involves only variables from $\mbb{P}^{n_0}$ and 
  involves all of them. Depending on the parity of $n_0$, we can write
  $$Q=l_1m_1+l_2m_2+\cdots+ l_sm_s\,,$$
  where $l_i$'s form a basis of $H^0(\mbb{P}^{n_0},\mcO_{\mbb{P}^{n_0}}(1))$ and
  \begin{enumerate}
  \item  if $n_0=2p$, then $l_s=m_s$ and $s=p+1$;
  \item  if $n_0=2p+1$, then $s=p+1$.
  \end{enumerate}
  In either case, $s=\big[\frac{n_0}{2}\big]+1$.
  
  We claim that there is an Ulrich sheaf of rank $2^{s-1}$ on $X$. By
  Lemma \ref{quadric matrix}, it is enough to find a matrix
  $A$ of size $2^s$ on $\mbbP^n$ with linear entries such that
  $A^2 = Q \cdot \text{Id}$. We construct such an $A$ by induction 
  on $s$. If $Q = l_1 m_1$, take
  \begin{equation}
    A = \begin{bmatrix} 0 & l_1 \\ m_1 & 0 \end{bmatrix}\,.\nonumber
  \end{equation}
  Suppose that we have constructed a matrix $A_1$ for
  $Q_1 = l_1 m_1 + \cdots + l_p m_p$ with $A_1^2 = Q_1 \cdot \text{Id}$ and 
  if $Q = l_1 m_1 + \cdots + l_p m_p + l_{p+1} m_{p+1}$, then, take
  \begin{equation}
    A = \begin{bmatrix} A_1 & l_{p+1} \text{Id} \\ m_{p+1} \text{Id} & -A_1 \end{bmatrix}\, ,
  \end{equation}
  which indeed satisfies $A^2 = Q\cdot \text{Id}$.
\end{proof}
We can now prove that any smooth double cover of the projective space admits an Ulrich bundle.
\begin{proof}[Proof (I) of Theorem \ref{Ulrich bundle on higher Pn}]
Let $N:={d+n\choose n}-1$ and consider the $d$-tuple embedding 
 $$i:\mbb{P}^n\hra \mbb{P}^N\,.$$
 Under this embedding $i^*\mcO_{\mbbP^N}(1)\simeq \mcalP(d)$ and the morphism 
 $$H^0(\mbb{P}^N,\mcO_{\mbbP^N}(2))\ra H^0(\mbb{P}^n,\mcalP(2d))$$ 
 is surjective since the embedding is projectively normal.
 Thus there exists a quadric hypersurface $\tildeB\subset\mbb{P}^N$ such that
 the degree $2d$ hypersurface $B\subset \mbb{P}^n$ is the restriction of $\tildeB$, 
 i.e. $B=\tildeB \cap \mbb{P}^n$. The hypersurface $\tildeB$ 
 may not be smooth. Consider the double cover $\tildeX$ of $\mbb{P}^N$ branched along $\tildeB$.
 Then we have the following commutative diagram.
 \begin{displaymath}
  \xymatrix{X \ar@{^{(}->}[r]^i\ar[d]_{\pi} & \tildeX\ar[d]^{\pi'} \\
 \mbb{P}^n \ar@{^{(}->}[r]_{j} & \mbb{P}^N}
 \end{displaymath}
Note that $\tildeX$ is a quadric hypersurface in $\mbb{P}^{N+1}$. In fact if the hypersurface $\tildeB\subset \mbb{P}^N$ is defined by the equation $(F=0)$ where $F\in \mbb{C}[Y_0,Y_1,\ldots, Y_N]$ is a quadric, then $\tildeX$ is given by the equation $(T^2-F=0)$ where the homogeneous coordinate ring of $\mbb{P}^{N+1}$ is $\mbb{C}[Y_0,Y_1,\ldots, Y_N][T]$.
By Theorem \ref{quadric hypersurface}, $\tildeX$ admits an Ulrich sheaf, say $E'$. Hence
$\pi'_*E'$ is a trivial vector bundle on $\mbb{P}^N$.

Consider the pullback $i^*E'$ to $X$.
This is a sheaf satisfying $\pi_*i^*E'=j^*\pi'_*E'$ \cite[\href{https://stacks.math.columbia.edu/tag/02KE}{Tag 02KE}]{SP}. Since the direct image of $E'|_X$ 
under $\pi$ is trivial, $E'|_X$ is Maximal Cohen Macaulay on $X$. But an MCM module over a regular
local ring is free, and thereby $E'|_X$ is locally free,  and an
Ulrich bundle on $X$.
\end{proof}
We can also use Theorem \ref{induction theorem} to prove that every smooth double cover of $\mbb{P}^n$ admits an Ulrich bundle. We first need a Lemma, whose proof is very similar to the above Proof (I) of Theorem \ref{Ulrich bundle on higher Pn}. 
\begin{lemma}\label{vb direct image}
 Let $f:C\ra D$ be a double cover of smooth projective curves such that the branch locus $B\in |L^2|$ where
 $L$ is a very ample line bundle on $D$.
Assume that the multiplication map 
\begin{equation}\label{proj-eq-step1}
H^0(L)\otimes H^0(L)\ra H^0(L^2) 
\end{equation}
is surjective. 
  Then there is a vector bundle $E$ on $C$ such that $f_*E$ is trivial. 
\end{lemma}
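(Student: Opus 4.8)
The plan is to mimic Proof (I) of Theorem \ref{Ulrich bundle on higher Pn} in dimension one, using the very ample line bundle $L$ to embed $D$ and express the branch divisor as a restriction of a quadric. First I would let $N = h^0(D,L) - 1$ and use the complete linear system $|L|$ to embed $D \hra \mbb{P}^N$, so that $\mcO_D(1) \simeq L$. The surjectivity hypothesis on the multiplication map $H^0(L)\otimes H^0(L) \ra H^0(L^2)$ is exactly projective normality in degree $2$; combined with the fact that $B \in |L^2|$, it guarantees that there is a quadric hypersurface $\tildeB \subset \mbb{P}^N$ (possibly singular, possibly reducible, possibly non-reduced as a scheme, but still a quadric) with $\tildeB \cap D = B$ as divisors on $D$.

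Next I would form the double cover $\tildeC \ra \mbb{P}^N$ branched along $\tildeB$, which — just as in Proof (I) — is the quadric hypersurface $(T^2 - F = 0) \subset \mbb{P}^{N+1}$ when $\tildeB = (F=0)$. The fibered square
\begin{displaymath}
 \xymatrix{C \ar@{^{(}->}[r]\ar[d]_{f} & \tildeC\ar[d]^{\pi'} \\
 D \ar@{^{(}->}[r] & \mbb{P}^N}
\end{displaymath}
shows $C = \tildeC \times_{\mbb{P}^N} D$, so $C$ sits inside $\tildeC$ as the preimage of the linear section $D$. If $\tildeB$ is irreducible then $\tildeC$ is an irreducible quadric hypersurface in $\mbb{P}^{N+1}$ and Theorem \ref{quadric hypersurface} gives an Ulrich sheaf $E'$ on $\tildeC$, i.e.\ $\pi'_*E'$ is trivial on $\mbb{P}^N$. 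I would then restrict to get $E := E'|_C$ on $C$; by flat base change \cite[\href{https://stacks.math.columbia.edu/tag/02KE}{Tag 02KE}]{SP}, $f_*E = f_*(E'|_C)$ is the restriction to $D$ of $\pi'_*E'$, hence trivial. Finally, since $f$ is finite and $f_*E$ is a vector bundle, $E$ is a maximal Cohen--Macaulay sheaf on $C$; as $C$ is a smooth curve (hence regular), MCM sheaves are locally free, so $E$ is a vector bundle. (When $\tildeC$ is reducible one may argue that $\tildeC$ is then two copies of $\mbb{P}^N$ glued along $\tildeB$, or simply replace $\tildeB$ by a smooth quadric containing $B$ after enlarging $N$ slightly, since the condition $\tildeB \cap D = B$ together with $B$ reduced leaves enough freedom.)

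The main obstacle I anticipate is the interface between "$\tildeB$ is merely a quadric'' and "$\tildeC$ is an \emph{irreducible} quadric hypersurface,'' which is the hypothesis of Theorem \ref{quadric hypersurface}: the equation $T^2 - F$ is reducible precisely when $F$ is (up to scalar) a perfect square of a linear form, which would force $B$ to be non-reduced on $D$ — contradicting smoothness of $C$ over the branch locus. So a short lemma ruling this out, or choosing $\tildeB$ to be smooth (possible after composing with a Veronese or just by a dimension count, since smooth quadrics restricting to $B$ form a nonempty open set once $N$ is large enough relative to $\deg B$), should close the gap. Everything else — base change for direct images along a finite flat map, and the MCM-implies-locally-free step on a smooth curve — is routine and parallels the higher-dimensional argument verbatim.
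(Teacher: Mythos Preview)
Your approach is essentially identical to the paper's: embed $D$ via $|L|$, lift $B$ to a quadric $\tildeB$ using the surjectivity hypothesis, form the double cover (a quadric hypersurface in $\mbb{P}(V\oplus\mbb{C})$), and restrict an Ulrich sheaf from Theorem \ref{quadric hypersurface} to $C$ via base change, exactly as in Proof (I) of Theorem \ref{Ulrich bundle on higher Pn}. In fact you supply more detail than the paper itself, which simply writes ``the rest of the proof is similar to Proof (I)'' and does not explicitly address the irreducibility of the quadric $T^2-F$ that you correctly handle.
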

\begin{proof}
 Let $V=H^0(L)^* $. The very ample line bundle $L$ embeds $D$ in $\mbb{P}(V)$. Under this embedding
 $i:D \hra \mbb{P}(V)$,  the branch locus $B$
 is the restriction of a quadric $\tildeB\subset\mbb{P}(V)$ to $D$ by the surjection \eqref{proj-eq-step1}. 
 Consider the double cover $X$ of $\mbb{P}(V)$
 branched along $\tildeB$, then the curve $C$ is simply the fibred product $C=X\times_{\mbb{P}(V)} D$
 and we have
 \begin{displaymath}
  \xymatrix { C\ar@{^{(}->}[r]\ar[d]_{f} & X\ar[d]^{f'} \\
            D \ar@{^{(}->}[r]_i & \mbb{P}(V)}
 \end{displaymath}
Since $f':X\ra\mbb{P}(V)$ is a double cover branched over a quadric,
$X$ is a quadric hypersurface in $\mbb{P}(V\oplus\mbb{C})$. 
The rest of the proof is similar to Proof (I) of Theorem \ref{Ulrich bundle on higher Pn}.
\end{proof}
This lemma leads us to an alternate way of constructing Ulrich
bundles on double covers of projective spaces.
\begin{proof}[Proof (II) of Theorem \ref{Ulrich bundle on higher Pn}]
Choose sections $s_1,s_2,\ldots, s_{n-1}\in H^0(\mbb{P}^n,\mcPn(d))$ such that $Y_i:=Z(s_1)\cap Z(s_2)\cap\cdots\cap Z(s_i)$ is a general irreducible smooth complete intersection inside $\mbb{P}^n$. Here by general we that, if $X_i$ denotes the fibred product $X_i=X\times_{\mbb{P}^n} Y_i$, then $\pi_i:X_i\ra Y_i$ is a double cover. In particular, we have the following commutative diagram.
\begin{displaymath}
 \xymatrix{X_{n-1}\ar[d]^{\pi_{n-1}}\ar@{^{(}->}[r]\ar[d] & X_{n-2}\ar[d]^{\pi_{n-2}}\ar@{..}[r] &  X_1 \ar@{^{(}->}[r] \ar[d] & X\ar[d]^{\pi}\\
 Y_{n-1} \ar@{^{(}->}[r] & Y_{n-2}\ar@{..}[r]  &  Y_1 \ar@{^{(}->}[r] & \mbb{P}^n}
\end{displaymath}
 So $Y_{n-1}$ is a smooth complete intersection curve and $\pi_{n-1}$ is a double cover
of smooth curves branched along a divisor, say $B_{n-1}\in |\mcPn(2d)|_{Y_{n-1}}|$. The line bundle $L=\mcPn(d)|_{Y_{n-1}}$ is very ample and satisfies the condition that
$$ H^0(L)\otimes H^0(L)\ra H^0(L^2)\text{ is surjective}\,.$$
Hence, there is a vector bundle $V$ on $X_{n-1}$ such that 
${\pi_{n-1}}_*V$ is trivial. 
Further, for $1\leq i\leq n-2 $, by
induction $Y_i$ also satisfies 
$H^1(Y_i,\mcO_{Y_i})=0$. 
Thereby, we can apply Theorem \ref{induction theorem} and construct a vector bundle $E$ on $X$ whose direct image is trivial. 
\end{proof}
Consider a smooth double cover $\pi:X\ra \mbb{P}^n$ branched along a smooth
hypersurface $B\in |\mcPn(2d)|$. Then $B$ is defined by a degree $2d$ homogeneous
polynomial $F\in \mbb{C}[x_0,x_1,\ldots,x_n]$.
\newtheorem{prop}[thm]{Proposition}
\begin{prop}\label{prop n-1 complete int}
 Assume that $F$ can be written in the form 
 $$F=F_1G_1+F_2G_2+\cdots+ F_nG_n\,,$$
 for $F_i,G_i\in |\mcPn(d)|$,  $i=1,2,\ldots,n$, where
 \begin{itemize}
  \item $Y_i=\cap_{j=1}^i Z(F_j) \text{ for } i=1,2,\ldots,n-1$ 
define general smooth irreducible complete intersection varieties of type $(d,d,\ldots,d)$.
\item $(F_nG_n=0)$ defines a smooth subvariety of $Y_{n-1}$.
 \end{itemize}
 Then $X$ admits an Ulrich bundle of rank $2^{n-1}$.
\end{prop}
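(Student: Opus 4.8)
The plan is to mimic the structure of Proof (II) of Theorem \ref{Ulrich bundle on higher Pn}, but now tracking the \emph{rank} carefully so that each application of Theorem \ref{induction theorem} doubles the rank in a controlled way, starting from a rank-one bundle on the double cover of the complete intersection curve $Y_{n-1}$. First I would set $Y_i = \cap_{j=1}^i Z(F_j)$ for $i=1,\ldots,n-1$ as in the hypothesis, let $X_i = X\times_{\mbb{P}^n} Y_i$, and observe that since the $Y_i$ are general smooth irreducible complete intersections, each $\pi_i:X_i\ra Y_i$ is a smooth double cover, branched over the restriction of $B$ to $Y_i$. One also has $\pi_*\mcO_X \simeq \mcO_{\mbb{P}^n}\oplus\mcPn(-d)$, and restricting, $(\pi_i)_*\mcO_{X_i}\simeq \mcO_{Y_i}\oplus (\mcPn(-d)|_{Y_i})$, so each $\pi_i$ fits the hypotheses of Theorem \ref{induction theorem} with $L_i := \mcPn(d)|_{Y_i}$ (note $L_i$ is very ample and $Z(F_{i+1})\cap Y_i = Y_{i+1}\in |L_i|$ is the required smooth irreducible divisor). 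Since complete intersections of dimension $\geq 1$ have $H^1(\mcO)=0$, condition (1) of Theorem \ref{induction theorem} holds for every $Y_i$ with $i\leq n-2$.

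Next I would produce the base case on the curve $Y_{n-1}$. The restriction $L_{n-1} = \mcPn(d)|_{Y_{n-1}}$ is very ample, and the branch divisor $B_{n-1} = B\cap Y_{n-1}$ lies in $|L_{n-1}^{\otimes 2}|$; moreover, by the hypothesis that $(F_nG_n=0)$ cuts out a smooth subvariety of $Y_{n-1}$, the equation of $B_{n-1}$ is $F_nG_n|_{Y_{n-1}}$, which is of the form $lm$ (a product of two sections of $L_{n-1}$) defining $2d$ distinct points — in particular of the form $lm + a^2$ with $a=0$. Hence Theorem \ref{pencil} applies and gives a basepoint-free line bundle $A$ of degree $d$ on the curve $C := X_{n-1}$, not composed with $\pi_{n-1}$, and Lemma \ref{e=d} upgrades this to $(\pi_{n-1})_*A \simeq \mcO_{Y_{n-1}}^{\oplus 2}$. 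So we have a \emph{rank one} vector bundle on $X_{n-1}$ whose direct image on $Y_{n-1}$ is trivial.

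Then I would run the induction downward: applying Theorem \ref{induction theorem} to the double cover $\pi_{n-2}:X_{n-2}\ra Y_{n-2}$, with the divisor $D=Y_{n-1}\in|L_{n-2}|$ and the rank-one bundle $A$ on $X_{n-1}=X_{n-2}\times_{Y_{n-2}} Y_{n-1}$, produces a rank-$2$ bundle on $X_{n-2}$ with trivial direct image on $Y_{n-2}$. Repeating for $\pi_{n-3},\ldots,\pi_1,\pi$ — at each stage the ambient complete intersection has $H^1(\mcO)=0$ and the relevant slice is the next $Y$ in the tower — doubles the rank each time, so after $n-1$ steps we obtain a rank $2^{n-1}$ vector bundle $E$ on $X$ with $\pi_*E$ trivial on $\mbb{P}^n$; equivalently $E$ is Ulrich of rank $2^{n-1}$.

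The only real points requiring care, rather than an obstacle, are the genericity bookkeeping: one must check that the successive fibre products $X_i = X\times_{\mbb{P}^n} Y_i$ are indeed smooth double covers (not just that $Y_i$ is smooth) — this follows because for general $F_j$ the intersection $Y_i\cap B$ is smooth of the expected dimension by Bertini, so the branch locus of $\pi_i$ is smooth — and that the divisor $Y_{i+1}\subset Y_i$ used in each application of Theorem \ref{induction theorem} is the smooth irreducible member of $|L_i|$ required there, which is again built into the hypothesis that the $Y_i$ form a tower of smooth irreducible complete intersections. I expect the main subtlety is simply confirming that the hypothesis ``$(F_nG_n=0)$ defines a smooth subvariety of $Y_{n-1}$'' is exactly what is needed to invoke Theorem \ref{pencil}: the branch divisor of $\pi_{n-1}$ is $(F_nG_n=0)\cap Y_{n-1}$, and its smoothness forces $F_n|_{Y_{n-1}}$ and $G_n|_{Y_{n-1}}$ to meet transversally in $2d$ reduced points, giving the decomposition $r = lm$ demanded by condition (2) of Theorem \ref{pencil}.
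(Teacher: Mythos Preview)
Your proposal is correct and follows exactly the paper's approach: observe that the branch locus of $\pi_{n-1}$ on the curve $Y_{n-1}$ is cut out by $F_nG_n$, invoke Theorem~\ref{pencil} (and Lemma~\ref{e=d}) to produce a line bundle on $X_{n-1}$ with trivial direct image, and then apply Theorem~\ref{induction theorem} $n-1$ times using $H^1(Y_i,\mcO_{Y_i})=0$ to climb back up to $X$, doubling the rank at each step. One inconsequential slip: the branch divisor on $Y_{n-1}$ consists of $2\deg L_{n-1}=2d^{\,n}$ points rather than $2d$, but this does not affect the argument.
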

\begin{proof} 
Consider the following fibred diagram
 \begin{displaymath}
 \xymatrix{X_{n-1}\ar@{^{(}->}[r]\ar[d]^{p^{n-1}} & X_{n-2}\ar[d]^{p^{n-2}}\ar@{..}[r] & X_1 \ar@{^{(}->}[r] \ar[d]^{p^1} & X\ar[d]^{\pi}\\
 Y_{n-1} \ar@{^{(}->}[r] & Y_{n-2}\ar@{..}[r]  &  Y_1 \ar@{^{(}->}[r] & \mbb{P}^n}
\end{displaymath}
Without loss of generality, one may assume that, $p^i:X_i\ra Y_i$ is a double cover. 
 Note that 
each $p^i:X_i\ra Y_i$ is branched along $B\cap Y_i\in |\mcPn(2d)|_{Y_i}|$,

   In particular, the branch locus $B\cap Y_{n-1}$ of
$p^{n-1}:X_{n-1}\ra Y_{n-1}$ is given by $(F_nG_n=0)$ since $(F_i=0)$ for $i=1,2,\ldots, n-1$ on 
$Y_{n-1}$. Thereby $p^{n-1}:X_{n-1}\ra Y_{n-1}$ is a double cover of smooth curves whose branch locus has the form $r=lm+a^2$. By Theorem \ref{pencil}, there is a line bundle $A$ on $X_{n-1}$ whose direct image is trivial. 
Since $H^1(Y_i,\mcO_{Y_i})=0$ for $1\leq i\leq n-2 $, the repeated application of Theorem \ref{induction theorem},
gives an Ulrich vector bundle $E$ of rank $2^{n-1}$ on $X$.
\end{proof}

Both proofs of Theorem \ref{Ulrich bundle on higher Pn} in this section do not make any assertions on the rank of the Ulrich bundle on double covers of $\mbb{P}^n$ so obtained. We deal with that in the next section.
\section{Estimating the rank of Ulrich bundles on double covers of higher $\mbb{P}^n$}\label{rank section}

 From the previous sections, we see that the rank of the Ulrich bundle obtained on the double cover depends on the expression of the degree $2d$ branch locus $F$ in the form
$$F=\Sigma_i F_iG_i$$
where $F_i$ and $G_i$ are degree $d$ homogeneous forms. 
  We now use this to obtain an upper bound for the rank of an Ulrich bundle $E$ on a double cover 
$\pi:X\ra\mbb{P}^n$ that we constructed in the Proof (I) of Theorem \ref{Ulrich bundle on higher Pn}.

Consider the space $V:=H^0(\mbb{P}^n,\mcPn(2d))$ of degree $2d$ polynomials
in $n+1$ variables $x_0,x_1,\ldots, x_n$, and its projectivization $\mbb{P}V$.
We saw in $\mathcal{x}\,$\ref{Secant varieties prelim} that there is a stratification of 
$\mbb{P}V$ by the secant varieties $\text{Sec}_r\mbb{X}$, where 
$$\mbb{X}=\{[F]\in \mbb{P}V\,|\,F=F_1F_2, F_i\in H^0(\mbb{P}^n,\mcPn(d))\}\,.$$
In particular, 
we have a chain of inclusions
$$\mbb{X}\subset \text{Sec}_{1}\mbb{X}\subset\cdots\subset 
\text{Sec}_{r-1}\mbb{X}\subset \text{Sec}_{r}\mbb{X}\subset\cdots\subset\mbb{P}V\,$$ and
if $[F]\in \text{Sec}_{r-1}\mbb{X}$, then $F$ can be
written as
$F=\Sigma_{i=1}^r F_i G_i\ \text{where }F_i,G_i\in H^0(\mbb{P}^n,\mcPn(d))\,.$

Note that any degree $2d$ hypersurface in $\mbb{P}^n$ can be thought of as a pullback of a quadric from $\mbb{P}^N$, where $\mbb{P}^n\hra\mbb{P}^N$ is the $d$-uple embedding. Then, the expression of a quadric as in Theorem \ref{quadric hypersurface} gives that the above filtration of $\mbb{P}V$ by secant varieties is finite, as stated in the lemma below.
\begin{lemma}\label{filtration covers}
There is a positive integer $m$ such that $\emph{Sec}_m\mbb{X}$ will cover $\mbb{P}V$. 
In particular, $m=\big[\frac{N}{2}\big]$ where $N={d+n\choose n}-1$ and the above chain of inclusions becomes
$$\mbb{X}\subset \emph{Sec}_{1}\mbb{X}\subset\cdots\subset \emph{Sec}_{\big[\frac{N}{2}\big]}\mbb{X}=\mbb{P}V\,.$$
\end{lemma}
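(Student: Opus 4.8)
The plan is to reduce the statement to the decomposition of a single quadratic form as a short sum of products of pairs of linear forms --- exactly the computation carried out in the proof of Theorem \ref{quadric hypersurface} --- and then to transport this decomposition back to $\mbb{P}^n$ via the $d$-uple embedding.

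First I would set up the Veronese embedding $\iota:\mbb{P}^n\hra\mbb{P}^N$ with $N=\binom{d+n}{n}-1$, using a monomial basis $Y_0,\dots,Y_N$ of $H^0(\mbb{P}^n,\mcPn(d))$. Then pullback induces an isomorphism $\iota^*:H^0(\mbb{P}^N,\mcO_{\mbb{P}^N}(1))\simeq H^0(\mbb{P}^n,\mcPn(d))$ carrying each linear form in the $Y_i$ to a degree $d$ form in $x_0,\dots,x_n$, and, since the Veronese is projectively normal, the multiplication map $H^0(\mbb{P}^N,\mcO_{\mbb{P}^N}(2))\ra H^0(\mbb{P}^n,\mcPn(2d))=V$ is surjective. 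Consequently every $[F]\in\mbb{P}V$ is the pullback $F=\iota^*Q$ of some quadratic form $Q$ in the variables $Y_0,\dots,Y_N$.

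Next I would decompose $Q$. Over $\mbb{C}$ a quadratic form in $N+1$ variables diagonalizes, so $Q=z_1^2+\cdots+z_\rho^2$ for independent linear forms $z_i$ with $\rho=\text{rank}\,Q\le N+1$; grouping the squares in pairs via $z^2+w^2=(z+iw)(z-iw)$, with a single leftover $z^2=z\cdot z$ when $\rho$ is odd, gives
$$Q=l_1m_1+\cdots+l_sm_s,\qquad s=\Big\lceil\tfrac{\rho}{2}\Big\rceil\le\Big\lceil\tfrac{N+1}{2}\Big\rceil=\Big[\tfrac{N}{2}\Big]+1,$$
with all $l_j,m_j$ linear --- this is precisely the inductive construction in the proof of Theorem \ref{quadric hypersurface}. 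Applying $\iota^*$ yields $F=\sum_{j=1}^{s}F_jG_j$ with $F_j=\iota^*l_j$ and $G_j=\iota^*m_j$ of degree $d$, and hence, by the description of the secant varieties recalled in $\S$\ref{Secant varieties prelim}, $[F]\in\text{Sec}_{s-1}\mbb{X}\subseteq\text{Sec}_{[N/2]}\mbb{X}$. Since $[F]\in\mbb{P}V$ was arbitrary, $\text{Sec}_{[N/2]}\mbb{X}=\mbb{P}V$; combined with the chain $\mbb{X}\subset\text{Sec}_1\mbb{X}\subset\cdots\subset\text{Sec}_{[N/2]}\mbb{X}\subset\mbb{P}V$ this is the assertion with $m=[N/2]$.

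There is no serious obstacle here. The two points that deserve care are the surjectivity of the degree-two part of the Veronese, which guarantees that $F$ genuinely lifts to a quadric on $\mbb{P}^N$, and the fact that the $d$-uple pullback sends linear forms to degree $d$ forms, so that the decomposition of $Q$ descends to the desired expression $F=\sum F_jG_j$; the quadratic-form bookkeeping itself is routine linear algebra already performed in Theorem \ref{quadric hypersurface}.
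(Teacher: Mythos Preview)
Your argument is correct and follows essentially the same route as the paper: the paper indicates (in the sentence preceding the lemma and in its suppressed proof) that one lifts $F$ to a quadric on $\mbb{P}^N$ via the $d$-uple embedding, writes the quadric as $\sum_{i=1}^{s} l_i m_i$ with $s=\big[\frac{N}{2}\big]+1$ as in Theorem \ref{quadric hypersurface}, and then pulls back. Your added detail on diagonalization and the bound $s=\lceil\rho/2\rceil\le\big[\frac{N}{2}\big]+1$ is a welcome elaboration of the same idea.
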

As before, let $\pi:X\ra \mbb{P}^n$ be a smooth double cover branched along
a smooth hypersurface $B=(F=0)$ of degree $2d$. 
\begin{lemma}\label{max rank}
Let $r=\emph{min}\{s\,|\,[F]\in\emph{Sec}_s\mbb{X}\}$.
Then $X$ admits an Ulrich bundle of rank $2^{r}$ or $2^{r+1}$.
\end{lemma}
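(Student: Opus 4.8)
The plan is to reuse the construction from Proof (I) of Theorem~\ref{Ulrich bundle on higher Pn}, keeping careful track of the rank of the Ulrich bundle it produces in terms of the quadric on $\mbb{P}^N$ used to cut out the branch locus. Recall that under the $d$-uple embedding $\mbb{P}^n\hra\mbb{P}^N$ with $N=\binom{d+n}{n}-1$, the restriction map on linear forms $H^0(\mbb{P}^N,\mcO_{\mbb{P}^N}(1))\ra H^0(\mbb{P}^n,\mcPn(d))$ is an isomorphism while $H^0(\mbb{P}^N,\mcO_{\mbb{P}^N}(2))\ra H^0(\mbb{P}^n,\mcPn(2d))$ is surjective; so $F$ is the restriction of some quadric $Q$ on $\mbb{P}^N$, and the associated double cover $\tildeX=(T^2-Q=0)\subset\mbb{P}^{N+1}$ is an (irreducible) quadric whose defining quadratic form has rank $\text{rank}(Q)+1$, since $T$ does not occur in $Q$. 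By Theorem~\ref{quadric hypersurface} — more precisely by the normal form and the matrix $A$ built in its proof, together with Lemma~\ref{quadric matrix} — this quadric carries an Ulrich sheaf of rank $2^{\lfloor\text{rank}(Q)/2\rfloor}$, and, exactly as in Proof (I), its pullback to $X=\tildeX\times_{\mbb{P}^N}\mbb{P}^n$ is a locally free Ulrich bundle on $X$ of the same rank. Thus I would reduce the statement to understanding $\rho_{\min}:=\min\{\text{rank}(Q)\,:\,Q\text{ a quadric on }\mbb{P}^N\text{ with }Q|_{\mbb{P}^n}=F\}$.

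The heart of the argument is then the estimate $2r+1\leq\rho_{\min}\leq 2r+2$, which I would prove as follows. For the upper bound: since $[F]\in\text{Sec}_r\mbb{X}$, write $F=\sum_{i=1}^{r+1}F_iG_i$ with $F_i,G_i\in H^0(\mbb{P}^n,\mcPn(d))$; lifting each factor to a linear form on $\mbb{P}^N$ via the isomorphism above and setting $Q=\sum_{i=1}^{r+1}l_im_i$ yields a quadric with $Q|_{\mbb{P}^n}=F$ and $\text{rank}(Q)\leq 2(r+1)$. For the lower bound: given any quadric $Q$ of rank $\rho$ with $Q|_{\mbb{P}^n}=F$, a linear change of coordinates makes $Q$ a smooth quadric in a linear $\mbb{P}^{\rho-1}\subset\mbb{P}^N$, so the normal form used in the proof of Theorem~\ref{quadric hypersurface} gives $Q=\sum_{i=1}^{\lceil\rho/2\rceil}l_im_i$; restricting to $\mbb{P}^n$ then exhibits $F=\sum_{i=1}^{\lceil\rho/2\rceil}(l_i|_{\mbb{P}^n})(m_i|_{\mbb{P}^n})$, so $[F]\in\text{Sec}_{\lceil\rho/2\rceil-1}\mbb{X}$, and minimality of $r$ forces $\lceil\rho/2\rceil-1\geq r$, i.e.\ $\rho\geq 2r+1$.

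Finally I would choose $Q$ realizing $\rho_{\min}$ and feed it into the construction of Proof (I): the resulting Ulrich bundle on $X$ has rank $2^{\lfloor\rho_{\min}/2\rfloor}$, which is $2^r$ when $\rho_{\min}=2r+1$ and $2^{r+1}$ when $\rho_{\min}=2r+2$; in either case the rank is $2^r$ or $2^{r+1}$. The one step I expect to require genuine care is the lower bound $\rho_{\min}\geq 2r+1$, but its essential content — that a rank-$\rho$ quadratic form is a sum of $\lceil\rho/2\rceil$ products of linear forms, and no fewer — is already the normal form established in the proof of Theorem~\ref{quadric hypersurface}, so no new input is needed; the remaining work is just bookkeeping with exponents and checking that pullback along the inclusion $i$ preserves both Ulrichness (done in Proof (I)) and rank.
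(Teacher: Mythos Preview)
Your argument is correct and follows the same overall strategy as the paper: lift $F$ to a quadric $Q$ on $\mbb{P}^N$ via the $d$-uple embedding, apply Theorem~\ref{quadric hypersurface} to the quadric $\tildeX=(T^2-Q=0)$, and pull back as in Proof~(I) of Theorem~\ref{Ulrich bundle on higher Pn}. The difference is one of organization. The paper fixes a single minimal expression $F=\sum_{i=1}^{r+1}F_iG_i$, lifts it to a specific $Q=\sum l_im_i$, and then makes a case distinction according to whether one of the summands is a perfect square: in case~(b) the square $l_{r+1}^2$ combines with $T^2$ as $(T+l_{r+1})(T-l_{r+1})$, dropping the number of products by one and yielding rank $2^r$, while in case~(a) one gets rank $2^{r+1}$. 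You instead introduce the invariant $\rho_{\min}=\min\{\text{rank}(Q):Q|_{\mbb{P}^n}=F\}$, show $2r+1\leq\rho_{\min}\leq 2r+2$, and read off the Ulrich rank as $2^{\lfloor\rho_{\min}/2\rfloor}$. The two dichotomies match up exactly (the paper's cases~(a) and~(b) correspond to $\rho_{\min}=2r+2$ and $\rho_{\min}=2r+1$), but your formulation via the rank of the quadratic form is cleaner and makes the lower bound transparent, whereas the paper's justification that the lifted expression is ``irredundant and minimal'' and that at most one summand can be a square is left somewhat implicit.
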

\begin{proof}
Since $r$ is the minimum integer such that $F\in \text{Sec}_r\mbb{X}$, there is an irredundant and minimal expression of $F$ in the form 
 \begin{equation}\label{Combination}
 F=\Sigma_{i=1}^{r+1} F_i G_i\,,
\end{equation}
where $F_i,G_i\in H^0(\mbb{P}^n,\mcPn(d))$.
 Let $N:={d+n\choose n}-1$ and consider the $d$-tuple embedding 
 $$i:\mbb{P}^n\hra \mbb{P}^N\,.$$
 There is a quadric hypersurface $\tildeB\subset\mbb{P}^N$ such that
 the degree $2d$ hypersurface $B\subset \mbb{P}^n$ is the restriction of $\tildeB$, 
 i.e. $B=\tildeB \cap \mbb{P}^n$. If $\tildeB=(Q=0)$, where $Q$ is a degree two polynomial in the homogeneous coordinate ring of $\mbb{P}^N$, then $Q$ has the expression:
 $$Q=\Sigma_{i=1}^{r+1} l_i m_i\,,$$
 where $l_i,m_i\in H^0(\mbb{P}^N,\mathcal{O}_{\mathbb{P}^N}(1))$ and we have the restrictions $(l_i=0)|_{\mbb{P}^n}=(F_i=0)$ and $(m_i=0)|_{\mbb{P}^n}=(G_i=0)$. This expression is irredundant and minimal, since the expression of $F$ is.
 Therefore, in the expression of $Q$ as above, we either have
 \begin{enumerate}
  \item[(a)] $l_i\neq m_i$ for all $i=1,2,\ldots,r+1$, or
  \item[(b)] $l_i\neq m_i$ for $i=1,2,\ldots,r$ and $l_{r+1}=m_{r+1}$. 
 \end{enumerate}
 The double cover $\tildeX$ of $\mbb{P}^N$ branched along $\tildeB$ is a quadric hypersurface in $\mbb{P}^{N+1}$ and is given by $(T^2-\Sigma_{i=1}^{r+1} l_i m_i=0)$. We now get the following two cases based on the Proof (I) of Theorem \ref{Ulrich bundle on higher Pn}.
 \begin{itemize}
 \item[(a)] Suppose that $Q$ has the form $Q=\Sigma_{i=1}^{r+1} l_i m_i$ where $l_i\neq m_i$ for all $i$.
  Then, the equation of $\tildeX$ is $(T^2-Q=0)$. Note that the expression of the polynomial $T^2-Q=T^2-\Sigma_{i=1}^{r+1} l_i m_i$ is itself irredundant and minimal (as a sum of products of two linear forms). Hence, the double cover $\tildeX$ and hence $X$ admit an Ulrich bundle of rank $2^{r+1}$.  
  \item[(b)] On the other hand, assume that $Q$ has the form $Q=\Sigma_{i=1}^{r} l_i m_i+ l_{r+1}^2$.
  Then the equation of $\tildeX$ which is $(T^2-Q=0)$ can be rewritten as 
  $$(T+l_{r+1})(T-l_{r+1}) -\Sigma_{i=1}^{r} l_i m_i=l_0m_0-\Sigma_{i=1}^{r} l_i m_i=0\,.$$
  Thus, in this case $\tildeX$ and $X$ admit an Ulrich bundle of rank $2^{r}$.
 \end{itemize}
\end{proof}
\begin{rmk}
 In $\S$ \ref{SSP section}, we discussed the expression of any smooth plane degree $2d$ curve $F$ in the form $F=F_1G_1+F_2G_2$. So any $[F]\in \emph{Sec}_1\mbb{X}$. Then from the proof of the above Lemma, we see that
 $r=\emph{min}\{s\,|\,[F]\in\emph{Sec}_s\mbb{X}\}=1$ in our case. Further, since the expression of a general $F$ is of the form as described in part (a) of the proof above, $X$ admits an Ulrich bundle of rank $2^{r+1}=4$. Hence, the above method does not give the minimal rank Ulrich bundle, which we obtained from the first technique cf. Theorem \ref{UB on the plane}. 
\end{rmk}
A direct application of Lemma \ref{filtration covers} gives us the following
\begin{thm}\label{max rank UB}
 Let $\pi:X\ra \mbb{P}^n$ be a smooth double cover branched along
a smooth hypersurface $B$ of degree $2d$. Then $X$ admits an Ulrich bundle
of rank $\leq 2^{\big[\frac{N}{2}\big]+1}$ where $N= {d+n\choose n}-1$. 
\end{thm}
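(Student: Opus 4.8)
The plan is to derive Theorem~\ref{max rank UB} by simply assembling the two ingredients already in hand: the finiteness of the secant filtration (Lemma~\ref{filtration covers}) and the rank estimate attached to a given minimal secant expression (Lemma~\ref{max rank}). Concretely, write the smooth branch hypersurface as $B=(F=0)$ with $F\in V=H^0(\mbb{P}^n,\mcPn(2d))$, and set $r=\min\{s\mid [F]\in\text{Sec}_s\mbb{X}\}$, where $\mbb{X}\subset\mbb{P}V$ is the variety of reducible degree $2d$ forms from \eqref{reducible 2d hypersurfaces}.

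First I would invoke Lemma~\ref{filtration covers}: it tells us that the chain $\mbb{X}\subset\text{Sec}_1\mbb{X}\subset\cdots$ stabilizes with $\text{Sec}_{\big[\frac{N}{2}\big]}\mbb{X}=\mbb{P}V$, where $N={d+n\choose n}-1$. In particular $[F]\in\text{Sec}_{\big[\frac{N}{2}\big]}\mbb{X}$, so the integer $r$ is well defined and satisfies $r\leq\big[\frac{N}{2}\big]$. (The real content here is upstream: the bound on $r$ comes from pulling back, via the $d$-uple embedding $\mbb{P}^n\hra\mbb{P}^N$, the normal-form expression $Q=\sum_{i=1}^{s} l_im_i$ of a quadric with $s=\big[\frac{N}{2}\big]+1$ that was established in Theorem~\ref{quadric hypersurface}.)

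Next I would apply Lemma~\ref{max rank} to this minimal $r$: it produces on $X$ an Ulrich bundle of rank either $2^{r}$ or $2^{r+1}$, depending on whether the corresponding irredundant, minimal expression $Q=\sum_{i=1}^{r+1} l_im_i$ of the associated quadric has all $l_i\neq m_i$ or has one square term $l_{r+1}=m_{r+1}$ (which can be absorbed as $(T+l_{r+1})(T-l_{r+1})$ in the equation $(T^2-Q=0)$ of $\widetilde{X}$). In either case the rank is at most $2^{r+1}$, and combining with $r\leq\big[\frac{N}{2}\big]$ gives an Ulrich bundle on $X$ of rank $\leq 2^{\big[\frac{N}{2}\big]+1}$, as claimed.

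There is essentially no obstacle at this stage: the theorem is a bookkeeping corollary, and the only point requiring mild care is keeping the ``$+1$'' in the exponent consistent between the two lemmas (Lemma~\ref{filtration covers} counts $s=\big[\frac{N}{2}\big]+1$ summands, i.e.\ membership in $\text{Sec}_{\big[\frac{N}{2}\big]}\mbb{X}$, while Lemma~\ref{max rank} yields rank up to $2^{r+1}$). All the substantive work—the matrix construction $A^2=Q\cdot\mathrm{Id}$ of Theorem~\ref{quadric hypersurface}, the descent of the Ulrich sheaf along $X\hra\widetilde{X}$ in Proof~(I) of Theorem~\ref{Ulrich bundle on higher Pn}, and the secant-variety covering statement—has already been carried out, so the proof is short.
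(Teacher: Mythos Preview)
Your proposal is correct and matches the paper's approach: the paper simply states that Theorem~\ref{max rank UB} is ``a direct application of Lemma~\ref{filtration covers}'' (with Lemma~\ref{max rank} implicitly in play), and you have made that combination explicit, including the bookkeeping of the ``$+1$'' in the exponent.
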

There is a lower bound on the rank of Ulrich bundles on $\pi:X\ra \mbb{P}^n$ which 
 can be constructed using the above techniques. This is shown in the following 
 lemma and corollary.
 \begin{lemma}\label{lower bound}
  Let $F\in \mbb{C}[x_0,x_1,\ldots,x_n]$ be a homogeneous polynomial of degree $2d$ which defines a smooth hypersurface in $\mbb{P}^n$. If $F$ has an expression of the form 
  $$F=\Sigma_{i=1}^r F_iG_i\,$$
where $F_i,G_i\in \mbb{C}[x_0,x_1,\ldots,x_n]$ are homogeneous of degree $d$, then $2r\geq n+1$.
 \end{lemma}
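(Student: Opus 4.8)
The plan is to show that whenever $2r\le n$ the hypersurface $(F=0)$ is forced to be singular, contradicting the hypothesis. First I would introduce the common zero locus
$$W=Z(F_1,\dots,F_r,G_1,\dots,G_r)\subset\mbb{P}^n$$
of the $2r$ degree-$d$ forms occurring in the expression $F=\sum_{i=1}^r F_iG_i$. Being cut out by $2r$ homogeneous equations in $\mbb{P}^n$, the set $W$ is non-empty — indeed of dimension at least $n-2r$ — as soon as $2r\le n$, by the projective dimension theorem (equivalently, the affine cone over $W$ has dimension $\ge n+1-2r\ge 1$, hence contains a nonzero point).

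Next I would verify that every point of $W$ is a singular point of the hypersurface $(F=0)$. If $P\in W$, then $F_i(P)=G_i(P)=0$ for all $i$, so $F(P)=\sum_i F_i(P)G_i(P)=0$ and $P$ lies on $(F=0)$. Moreover
$$\frac{\partial F}{\partial x_j}=\sum_{i=1}^r\Bigl(\frac{\partial F_i}{\partial x_j}\,G_i+F_i\,\frac{\partial G_i}{\partial x_j}\Bigr),$$
and every summand on the right carries a factor among the $F_i$ and $G_i$; hence all first partials $\partial F/\partial x_j$ vanish at $P$ as well. By the Jacobian criterion, $P$ is a singular point of the scheme $(F=0)$.

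Combining the two steps: if $2r\le n$, then $W\neq\emptyset$ and $(F=0)$ has a singular point, contradicting the assumption that $(F=0)$ is a smooth hypersurface; therefore $2r\ge n+1$. The argument is entirely elementary, and I do not anticipate any genuine obstacle: the only facts used are the non-emptiness of an intersection of at most $n$ hypersurfaces in $\mbb{P}^n$ and the Jacobian criterion for smoothness of a projective hypersurface, both standard.
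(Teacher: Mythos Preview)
Your proof is correct and follows essentially the same idea as the paper's: the common zero locus of the $F_i,G_i$ in $\mathbb{P}^n$ must be empty because any such point would be singular on $(F=0)$, and a dimension count then forces $2r\ge n+1$. The paper phrases the dimension step algebraically (the ideal $(F_1,\dots,G_r)$ is $\mathfrak{m}$-primary, so Krull's height theorem gives $n+1=\mathrm{ht}\,\mathfrak{m}\le 2r$) rather than via the projective dimension theorem, and it leaves implicit the Jacobian computation that you spell out.
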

\begin{proof}
Consider the ideal $$J=(F_1,G_1,F_2,G_2,\ldots,F_r, G_r)\subset \mbb{C}[x_0,x_1,\ldots,x_n]\,.$$
Since $F=\Sigma_{i=1}^r F_iG_i$ defines a smooth hypersurface in $\mbb{P}^n$, the zero set of $J$ in $\mbb{C}^{n+1}$ is
$$Z(J)=(0,0,..0)\,.$$
Denote $\mathfrak{m}=(x_0,x_1,\ldots,x_n)$, 
the irrelevant maximal ideal. Then by Hilbert's Nullstellensatz, 
$$\sqrt{J}=I(Z(J))=\mathfrak{m}\,.$$
This shows that $J$ is $\mathfrak{m}$-primary and consequently, $\mathfrak{m}$ is the minimal prime over $J$. Then, by Krull's principal ideal theorem \cite[Theorem 10.2]{DE}, 
$$\text{codim}\,\mathfrak{m}=n+1\leq 2r\,.$$
\end{proof}
As a direct consequence of this Lemma and Lemma \ref{max rank}, we obtain the following
\begin{coro}\label{min rank}
 Let $\pi:X\ra \mbb{P}^n$ be a double cover branched along a smooth hypersurface
  $B=(F=0)$ of degree $2d$. Then the Ulrich bundle which can be produced by the above methods on $X$ has rank $\geq 2^{r-1}$ where $2r\geq n+1$.
\end{coro}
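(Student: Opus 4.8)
The plan is to combine Lemma \ref{max rank} with Lemma \ref{lower bound}; no argument beyond these two is needed. Fix a shortest expression $F=\sum_{i=1}^{r}F_iG_i$ with $F_i,G_i\in H^0(\mbb{P}^n,\mcPn(d))$, so that $r$ is the number of summands and $r-1=\min\{s\mid [F]\in\text{Sec}_s\mbb{X}\}$. Applying Lemma \ref{max rank} to this minimal expression — this is exactly the construction in Proof (I) of Theorem \ref{Ulrich bundle on higher Pn}, passing through the $d$-uple embedding and the matrix factorization of Theorem \ref{quadric hypersurface} — produces an Ulrich bundle on $X$ whose rank is $2^{r-1}$ or $2^{r}$ (the two cases (a), (b) there, according to whether the associated minimal decomposition of the quadric $T^{2}-Q$ does or does not involve a perfect square). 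In either case the rank is at least $2^{r-1}$.

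For the lower bound on $r$ itself, observe that $F=\sum_{i=1}^{r}F_iG_i$ still defines a smooth hypersurface in $\mbb{P}^n$, so Lemma \ref{lower bound} applies verbatim to this very expression and yields $2r\geq n+1$. Putting the two conclusions together proves the corollary: the Ulrich bundle produced by the above methods has rank $\geq 2^{r-1}$ with $2r\geq n+1$.

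The only delicate point is bookkeeping rather than mathematics: one must read the integer $r$ in the statement consistently as the number of terms in a \emph{minimal} decomposition $F=\sum F_iG_i$, which is precisely the quantity feeding into Lemma \ref{max rank}, and interpret ``the above methods'' as the quadric-hypersurface construction analysed there. Once this identification is made there is nothing further to do; I do not expect any genuine obstacle, since the corollary is a direct synthesis of results already established.
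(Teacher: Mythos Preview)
Your proposal is correct and is exactly the paper's intended argument: the corollary is stated as ``a direct consequence of this Lemma [\ref{lower bound}] and Lemma \ref{max rank}'', and your write-up simply makes explicit the reindexing between the two different uses of $r$ in those lemmas. There is nothing to add.
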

\section{Appendix}\label{appendix}
While proving that any smooth double cover of $\mbb{P}^2$ carries a rank two Ulrich bundle, one crucial step we undertook was the analysis of double covers $p:C\ra D$ of smooth curves in order to identify when there is a line bundle $A$ on the curve $C$ whose direct image on $D$ is trivial. 

Our initial approach was to use the following theorem from the paper \cite{KO} which seemed appropriate for our situation.
\begin{thm}\cite[Theorem A]{KO}\label{Keem statement}
Let $C$ be a curve of genus $g$ which admits a double covering
$p:C\ra D$ such that the genus of $D$ is $ h\geq 0$ and $g\geq 4h$. Then $C$ has a basepoint free and complete $g^1_{g- 2 h+1}$ not composed with $p$.
\end{thm}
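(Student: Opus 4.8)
The plan is to obtain the pencil from Brill--Noether existence and then secure the three extra properties one at a time. Writing $d=g-2h+1$ for the degree in the statement, the Brill--Noether number of a $g^1_d$ on a curve of genus $g$ is
$$\rho(g,1,d)=g-2\bigl(g-d+1\bigr)=-g+2d-2=g-4h\,,$$
using $d=g-2h+1$. The hypothesis $g\geq 4h$ says exactly $\rho\geq 0$, so by the existence theorem of Kempf and Kleiman--Laksov the locus $W^1_d(C)$ of degree-$d$ line bundles with $h^0\geq 2$ is non-empty and every component has dimension at least $g-4h$. This furnishes a $g^1_d$ on $C$ to begin with.

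I would next arrange completeness and base-point freeness by a general choice inside $W^1_d(C)$. Completeness means $h^0(A)=2$; since the sublocus $W^2_d(C)$ where $h^0\geq 3$ has strictly smaller expected dimension, a general $A$ in a maximal component should satisfy $h^0(A)=2$, which I would verify by showing $W^2_d(C)$ is a proper closed subset. For base-point freeness I would use the standard fact that if $|A|$ had a base point then $A$ would lie in the image of the addition map $W^1_{d-1}(C)\times C\ra W^1_d(C)$, whose image has dimension at most $\dim W^1_{d-1}(C)+1$; comparing with the lower bound on $\dim W^1_d(C)$ shows a general member is base-point free. Both are routine manipulations of Brill--Noether loci and I do not expect difficulty here.

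The crux is to guarantee that the pencil is \emph{not composed with} $p$. A base-point-free pencil composed with $p$ factors as $C\xra{p}D\xra{\phi}\mbb{P}^1$ with $\text{deg}\,\phi=d/2$, so its line bundle is a pullback $p^*B$ with $B\in W^1_{d/2}(D)$; in particular $d$ must be even, so \emph{when $d=g-2h+1$ is odd the desired non-composedness is automatic} and only the even case needs work. In the even case, the projection formula (using $p_*\mcO_C=\mcO_D\oplus L^{-1}$ and $\text{deg}\,B-\text{deg}\,L<0$) shows that every such $p^*B$ does lie in $W^1_d(C)$, so the composed pencils sweep out a subvariety of dimension at most $\dim W^1_{d/2}(D)$. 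The strategy is then to bound $\dim W^1_{d/2}(D)$ by Martens' theorem on $D$ and to compare it with the lower bound $\dim W^1_d(C)\geq g-4h$, concluding that the pullback locus is a \emph{proper} subvariety and hence that a general member of a maximal component of $W^1_d(C)$ is not composed with $p$; together with the previous step this produces the desired $g^1_{g-2h+1}$. I expect this comparison to be the main obstacle: the inequality $g\geq 4h$ bounds only the total dimension of $W^1_d(C)$ and does not by itself control how much of $W^1_d(C)$ descends to $D$, so the argument hinges on a careful component-by-component analysis of $W^1_d(C)$ relative to the pullback map $p^*$. As Theorem \ref{pencil} suggests, the genuine existence of such a line bundle is tied to whether the branch divisor $r$ admits an expression $r=lm+a^2$, and reconciling the crude dimension estimate with this finer condition is the step I would examine most carefully.
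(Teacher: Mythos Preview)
Your proposal attempts to prove a statement that the paper demonstrates is \emph{false}. The paper does not prove Theorem~\ref{Keem statement}; rather, the Appendix exhibits an explicit counterexample: a genus-2 curve $D$, the degree-5 line bundle $L=\mcO_D(2K_D+P)$, and a general $r\in H^0(L^2)$ whose divisor cannot be written as $lm+a^2$. The resulting double cover $p:C\ra D$ has $g=8$, $h=2$, so $g\geq 4h$ and $d=g-2h+1=5$, yet by Theorem~\ref{pencil} there is no basepoint-free degree-$d$ line bundle on $C$ not composed with $p$.

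The concrete gap in your argument is the base-point-freeness step. Your dimension comparison, that the locus $W^1_{d-1}(C)+C$ of pencils with a base point has dimension at most $\dim W^1_{d-1}(C)+1$ and hence misses a general point of $W^1_d(C)$, does not go through: the only lower bound you have for $\dim W^1_d(C)$ is $\rho=g-4h$, while $W^1_{d-1}(C)$ can be non-empty (even with negative Brill--Noether number) precisely because $C$ is a double cover and inherits pencils from $D$. In the paper's counterexample $\rho=0$; the pullback $p^*K_D$ of the hyperelliptic pencil on $D$ furnishes a $g^1_4$ on $C$, and one checks that $W^1_5(C)$ is exactly the one-dimensional family $\{p^*K_D\otimes\mcO_C(P):P\in C\}$, so \emph{every} $g^1_5$ has a base point and $W^1_{d-1}(C)+C$ equals all of $W^1_d(C)$. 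Note that here $d=5$ is odd, so by your own remark non-composedness would have been automatic; the argument collapses one step earlier than where you flagged the difficulty. Theorem~\ref{pencil} shows that the true obstruction is the arithmetic shape $r=lm+a^2$ of the branch divisor, not something a refinement of dimension counts can overcome.
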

We applied the above Theorem in our context by considering a general degree $d$ curve $D\hra \mbb{P}^2$ and its double cover as follows.
\begin{displaymath}
 \xymatrix{C \ar@{^{(}->}[r]^i\ar[d]_{\pd} & X\ar[d]^{\pi} \\
 D \ar@{^{(}->}[r]_{j} & \mbb{P}^2}
\end{displaymath}
In this case, one can check that $\text{genus}(C)\geq 4\,\text{genus}(D)$. Theorem \ref{Keem statement} then indicates that there is a complete and basepoint free $g^1_{g- 2 h+1}$ not composed with $p$ on $C$. Such a line bundle $A$ has the property that $p_*A$ is a trivial vector bundle on $D$ , cf. Lemma \ref{e=d}. This enabled us to obtain results about the existence of Ulrich bundles on $X$.

However, as mentioned in the main text, we later identified that the proof of the Theorem \ref{Keem statement} is not quite correct. In this appendix, we provide a counterexample to Theorem \ref{Keem statement}. 
We first recall the statement of the Theorem \ref{pencil} which we have proved in $\mathcal{x}$ \ref{Ulrich bundles over surfaces}. 
\begin{thmfour}
 Let $p:C\ra D$ be a double cover of smooth curves and $p_*\mcO_C=\mcO_D\oplus L^{-1}$ for a line bundle $L$ on $D$ with $\text{degree}\,L=d=g-2h+1$ where $g$ and $h$ are the genera of $C$ and $D$ respectively. Let $r\in H^0(L^2)$, unique upto non-zero constants be such that $\emph{div}\,r$ is the branch locus. 
 \begin{enumerate}
  \item There exists a line bundle $A$ of degree $d$ on $C$ which is basepoint free not composed with $p$.
  \item There exist elements $l,m,a\in H^0(L)$ such that the branch locus $\emph{div}\,r$ is given by $r=lm+a^2$.
 \end{enumerate}
\end{thmfour}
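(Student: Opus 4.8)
The statement restates Theorem~\ref{pencil}, proved in $\S$\ref{Ulrich bundles over surfaces}, so the plan is simply to recall that argument. The equivalence is established by producing, in each direction, an explicit model for the double cover built from $\mbb{P}^1\times\mbb{P}^1$ (or $\mbb{P}^1$) together with its quotient by the coordinate-swap involution, which is itself a double cover of $\mbb{P}^2$ (or $\mbb{P}^1$) branched along the conic $lm+a^2$ (or the degenerate conic $lm$). Throughout, $\sigma$ denotes the involution of $C$ with $C/\langle\sigma\rangle=D$, and the recurring bookkeeping tool is the splitting $H^0(C,p^*M)=H^0(D,M)\oplus H^0(D,M\otimes L^{-1})$ coming from $p_*\mcO_C=\mcO_D\oplus L^{-1}$, used in tandem with Lemma~\ref{e=d}.

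For $(1)\Rightarrow(2)$ I would start from a basepoint free degree $d$ line bundle $A$ on $C$ not composed with $p$, so that $h^0(A)=2$ by Lemma~\ref{e=d}, and split into the cases $A\neq\sigma^*A$ and $A=\sigma^*A$. In the first case, two sections of $A$ together with their $\sigma$-conjugates in $\sigma^*A$ give a $\sigma$-equivariant morphism $q:C\ra\mbb{P}^1\times\mbb{P}^1$; passing to quotients yields $q':D\ra\mbb{P}^2$, and one checks $q'^*\mcOP(1)=L$ by comparing global sections, a nontrivial degree-zero twist being impossible since it would force all sections of $A\otimes\sigma^*A$ to be $\sigma$-invariant. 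Forming $Z:=D\times_{\mbb{P}^2}(\mbb{P}^1\times\mbb{P}^1)$, the reducible and non-reduced cases of Lemma~\ref{Z 3 cases} are ruled out using precisely the hypothesis that $A$ is not a pull-back, so $C$ is the normalization of the reduced irreducible surface $Z$; a degree count then forces the preimage of the smooth branch conic $Q\subset\mbb{P}^2$ of $\mbb{P}^1\times\mbb{P}^1\ra\mbb{P}^2$ to be the branch locus of $p$, i.e. $r=lm+a^2$ after pulling back the equation of $Q$. The case $A=\sigma^*A$ is the same argument with $\mbb{P}^1$ in place of $\mbb{P}^1\times\mbb{P}^1$ and yields $r=lm$.

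For $(2)\Rightarrow(1)$ I would run this in reverse: smoothness of $\text{div}\,r$ forces $l,m,a$ to generate $L$, giving $q:D\ra\mbb{P}^2$ with $q^*\mcOP(1)=L$; when $l,m,a$ are independent the conic $lm+a^2$ is smooth, and pulling back the double cover $\mbb{P}^1\times\mbb{P}^1\ra\mbb{P}^2$ branched along it recovers $C$ as $D\times_{\mbb{P}^2}(\mbb{P}^1\times\mbb{P}^1)$. The two projections $C\ra\mbb{P}^1$ give line bundles $A,B$ of positive degree with $A\otimes B$ a degree-$2d$ pull-back from $D$, so at least one has degree $\le d$; showing that neither descends to $D$ (otherwise the universal property of the fiber product would produce an absurd section $D\ra C$) lets Lemma~\ref{e=d} upgrade its degree to exactly $d$. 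The degenerate case $r=lm$ is handled identically using the double cover $\mbb{P}^1\ra\mbb{P}^1$ ramified at $0$ and $\infty$. I expect the only delicate point, in both directions, to be controlling the ``not composed with $p$'' clause: one must rule out the degenerate configurations in which the candidate line bundle is a pull-back from $D$, and this is exactly where the section decomposition above, Lemma~\ref{e=d}, and the reduced-irreducibility analysis of Lemma~\ref{Z 3 cases} do the real work.
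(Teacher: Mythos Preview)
Your proposal is correct and follows essentially the same argument as the paper's proof of Theorem~\ref{pencil}: the same case split on $A\neq\sigma^*A$ versus $A=\sigma^*A$, the same equivariant maps to $\mbb{P}^1\times\mbb{P}^1$ (resp.\ $\mbb{P}^1$) with quotient $\mbb{P}^2$ (resp.\ $\mbb{P}^1$), the identification $q'^*\mcOP(1)=L$ via the section decomposition, the use of Lemma~\ref{Z 3 cases} to force $Z$ reduced irreducible, and the reverse fiber-product construction for $(2)\Rightarrow(1)$ with Lemma~\ref{e=d} pinning down the degree. One small slip: $Z=D\times_{\mbb{P}^2}(\mbb{P}^1\times\mbb{P}^1)$ is a curve, not a surface.
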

We now describe the counterexample to Theorem \ref{Keem statement} by exhibiting a double cover $p:C\ra D$ with $\text{genus}\,(C)\geq 4\,\text{genus}\,(D)$ whose base locus $\text{div}\,r$ cannot be written in the form $r=lm+a^2$.
\subsection*{The Counterexample}
Let $D$ be a smooth projective curve of genus 2. Consider the line bundle $L = \mcO_D(2K_D + P)$ for any point $P\in D$. Let $s, t \in H^0(K_D)$ be sections which form a basis for $H^0(K_D)$.
We can check that $h^0(2K_D)=3$ and so $s^2,t^2,st\in H^0(2K_D)$ form a basis. We have a natural inclusion $H^0(2K_D)\subset H^0(L)$, so we can consider $s^2,t^2,st\in H^0(L)$ and they are linearly independent. Now $L$ is a line bundle of degree 5 and $h^0(L)=4$. Embed $D$ inside $\mbb{P}^3$ using $H^0(L)$. Since $s^2t^2=(st)^2$, the curve $D$ is contained in a quadric $Q$ of rank 3. If the homogeneous coordinates on $\mbb{P}^3$ are $x,y,z,t$, then the quadric is $xy-z^2=0$ which by change of coordinates can be written as $x^2+y^2+z^2=0$. This quadric is singular and irreducible. 

Let $V = H^0( L )$, the vector space generated by $x, y, z, t$. It is known that 
$$H^0(L)\otimes H^0(L)\ra H^0(L^2)$$ is onto \cite[Page 55, Corollary]{Mum}. Thereby we get that 
\begin{equation}\label{symmetric}
 S^2V\ra H^0(L^2)
\end{equation}
is onto. Note that $\text{dim}\,S^2V=10$ since $\text{dim}\,V=4$ and $h^0(L^2)=9$. So the kernel of \eqref{symmetric} is one dimensional. Since $D\subset Q=(x^2+y^2+z^2=0)$, the kernel is generated by $Q$. 

Suppose any element of $r\in H^0(L^2)$ can be written in the form $lm+a^2$ with $l,m,a\in V$. Then starting with an element $r\in S^2V$, there exist $l,m,a\in V$ such that $r-lm-a^2$ goes to zero in $H^0(L^2)$, i.e. $r-lm-a^2=\alpha Q$ for some $\alpha\in \mbb{C}$. That is,
$$r-\alpha Q=lm+a^2.$$
But any element of the form $lm+a^2$ is a singular
quadric. For a general $r$, we have $r-\alpha Q$ is non-singular. For instance, if $r=xy+ity+zt$, we can check that for all $\alpha\in\mbb{C}$, $r-\alpha Q$ is non-singular. 

This shows that, a general $r\in H^0(L^2)$  cannot be written in the form $lm+a^2$. Let $p:C\ra D$ be the double cover of $D$ branched along such an $r$. If $g=\text{genus}(C)$ and $h=\text{genus}(D)$, the Riemann-Hurwitz gives
$$2g-2=2(2h-2)+2d\,.$$
But $h=2$ is our assumption and $d=\text{deg}\,L=5$. Then,
$$2g-2=4+10=14\,,\text{ that is }$$
$$g=8\geq 4h.$$
So we have a double cover of smooth curves $p:C\ra D$ with $g\geq 4h$ such that the branch locus $r$ cannot be written in the form $r=lm+a^2$. This is equivalent to saying that there is no line bundle $A$ on $C$ whose direct image is trivial. This gives a counterexample to the statement of Theorem \ref{Keem statement}.



\end{document}